\pdfoutput=1

\documentclass[english]{jnsao}
\usepackage[capitalise]{cleveref}	
\usepackage{bbold}			
\usepackage{thm-restate}

\newcommand{\N}{\mathbb{N}}

\newcommand{\R}{\mathbb{R}}
\newcommand{\di}{\mathrm{d}}
\newcommand{\Id}{\mathrm{Id}}
\renewcommand{\epsilon}{\varepsilon}
\DeclareMathOperator*{\essinf}{ess\,inf}	
\newcommand{\Exp}{E}			

\newcommand{\norm}[1]{{\lVert #1 \rVert}}
\newcommand{\bignorm}[1]{\left\lVert #1 \right\rVert}

\newcommand{\X}{\mathcal{X}}	
\newcommand{\Y}{\mathcal{Y}}	
\newcommand{\calP}{\mathcal{P}} 
\newcommand{\T}{\mathcal{T}}	
\newcommand{\calH}{\mathcal{H}}
\newcommand{\calR}{\mathcal{R}} 
\newcommand{\calZ}{\mathcal{Z}} 
\newcommand{\CL}{\mathit{CL}}	
\newcommand{\cpt}{\mathrm{c}}	
\newcommand{\dint}{\mathfrak{d}}	
\newcommand{\Normal}{\mathcal{N}}

\usepackage{enumerate,enumitem}
\setenumerate[1]{label=(\roman*)}

\theoremstyle{definition}
\newtheorem{assumption}[theorem]{Assumption}
\numberwithin{equation}{section}
\crefname{assumption}{Assumption}{Assumptions}

\title{Goodness-of-fit testing for nonlinear inverse problems with random observations}
\shorttitle{Goodness-of-fit testing for nonlinear inverse problems}
\author{%
	Remo Kretschmann\thanks{Institute of Mathematics, University of Potsdam, Karl-Liebknecht-Straße 24--25, 14476 Potsdam, Germany\\(\email{remo.kretschmann@uni-potsdam.de}, \email{han.lie@uni-potsdam.de}).}
	\and Han Cheng Lie\footnotemark[1]
}
\shortauthor{R.~Kretschmann, H.C.~Lie}
\date{}

\hypersetup{
    pdftitle = {Goodness-of-fit testing for nonlinear inverse problems with random observations},
    pdfauthor = {Remo Kretschmann, Han Cheng Lie},
}

\begin{document}

\maketitle

\begin{abstract}
This work is concerned with nonparametric goodness-of-fit testing in the context of nonlinear inverse problems with random observations. Bayesian posterior distributions based upon a Gaussian process prior distribution are proven to contract at a certain rate uniformly over a set of true parameters. The corresponding posterior mean is shown to converge uniformly at the posterior contraction rate in the sense of satisfying a concentration inequality. Distinguishability for bounded alternatives separated from a composite null hypothesis at the posterior contraction rate is established using infimum plug-in tests based on the posterior mean and also on maximum a posteriori estimators. The results are applied to a class of inverse problems governed by ordinary differential equation initial value problems that is widely used in pharmacokinetics. For this class, uniform posterior contraction rates are proven and then used to establish distinguishability.
\end{abstract}

\small{
\textbf{Keywords}: Bayesian nonparametric statistics, goodness-of-fit testing, nonlinear inverse problems
\vskip2ex

\textbf{MSC codes}: 62C10, 62G05, 62G10, 62P10
}
\normalsize
\section{Introduction}
\label{sec:intro}

In this work, we consider inverse problems where one is interested in a quantity $\theta$ that cannot be observed directly. Instead, only noisy observations of a related quantity $G(\theta)$ are available. We consider the nonparametric case where both $\theta$ and $G(\theta)$ are functions, and where $G(\theta)$ is only observed at $N$ random points $X_1, \dots, X_N$ drawn from a potentially unknown distribution $\mu_X$. In this setting, we perform statistical inference for the quantity $\theta$ based upon the points $X_1, \dots, X_N$ and the corresponding noisy observations
\[ Y_i = [G(\theta)](X_i) + \epsilon_i, \quad i \in \{1, \dots, N\}. \]
Problems of this kind arise for example in pharmacokinetic applications such as model-informed precision dosing, see \cite{HarWahRasHui:2021,Lie:2024}.

We investigate goodness-of-fit testing in this indirect nonlinear nonparametric setting. Here, one assesses whether the function $\theta$ lies in a given class of functions $\Theta_0$ by means of a test $\Psi_N$ which accepts or rejects the hypothesis $H_0$: $\theta \in \Theta_0$ based upon the available observations $(X_i,Y_i)_{i=1}^N$. The term `goodness-of-fit testing' is typically used to highlight the fact that a \emph{composite} hypothesis $\theta \in \Theta_0$ is tested, in contrast with testing a simple hypothesis $\theta = \theta_0$, i.e., testing if $\theta$ has a specific value $\theta_0$.
It is then of interest to determine the separation in a certain metric of an alternative set of functions $\Theta_1$ from the set $\Theta_0$, such that the type $1$ and type $2$ errors of an optimal hypothesis test for testing $H_0$ against $H_1$: $\theta \in \Theta_1$ lie below a certain bound. Considered asympotically, this leads to the question at what rate this separation is allowed to converge to zero as the number of observations increases while $H_0$ and $H_1$ are still distinguishable by an optimal test, i.e., while the errors of an optimal test converge to zero. In addition, it may be of interest to determine how this so-called `minimax separation rate' depends on the smoothness of the functions in $\Theta_0$ and $\Theta_1$, and on the degree of ill-posedness of the inverse problem at hand.

The setting described above presents several challenges for the development of goodness-of-fit tests. These challenges include the possible nonlinearity of the considered composite hypothesis $H_0$, the nonlinearity of the forward model $G$, the ill-posedness of the inverse problem, the randomness of the observation points $(X_i)_{i=1}^{N}$, and the lack of knowledge about the distribution $\mu_X$ of the observation points.

\subsection{Literature review}

An overview of nonparametric goodness-of-fit testing in the direct linear case is given in \cite{IngSus:2003}. Separation rates are derived for the sequence space model, mostly simple hypotheses, and alternatives separated by power norms using optimal $\chi^2$-tests. Composite hypotheses that are considered take the form of shrinking power norm balls; see e.g. \cite[Section 8.5]{IngSus:2003}.
It is, moreover, mentioned that in the nonparametric case, the minimax separation rate is typically faster than the minimax rate for estimation, see \cite[Section 2.10]{IngSus:2003}, so that hypothesis testing may be possible even in settings in which estimation is unfeasible.
In \cite{Ing:1984} and \cite{Ing:1986}, goodness-of-fit testing is considered for composite hypotheses that are contained in a finite-dimensional linear subspace, again for direct linear problems.
In \cite{IngSapSus:2012}, goodness-of-fit testing is extended to linear inverse problems within the framework of the sequence space model, but only for simple hypotheses. In \cite{MarSap:2017}, this framework is further extended to the case of inverse problems with uncertainty in the eigenvalues of the forward model.
The approach that is used in \cite{IngSus:2003} and the subsequent works to prove minimax separation rates either relies heavily on the linearity of the considered problem and the availability of function-valued or infinite data, or on the existence of a spectral decomposition of the forward model. Thus, their approach cannot be easily generalised to the setting of nonlinear forward models and finite random observations.

In \cite{Ray:2013}, posterior contraction rates around a frequentist truth are shown for mildly and severely ill-posed linear inverse problems. Here, the general contraction result \cite[Theorem 2.1]{Ray:2013} for a general prior distribution is proven by proving distinguishability for plug-in tests based upon a truncated singular value decomposition (SVD) estimator in case of a simple hypothesis.
The proof follows that of Theorem 2.1 in \cite{GhoGhoVaa:2000}, with the difference that the existence of tests in the inverse problem case is not established using Theorem 7.1 in \cite{GhoGhoVaa:2000} but shown by direct computations using the consistency of the considered estimator.
The connection with posterior contraction is only made in the proof of Theorem 2.1 in \cite{GhoGhoVaa:2000}.
The results from \cite{GhoGhoVaa:2000} and \cite{Ray:2013} are formulated for alternatives that are separated by the hypothesis in a Hilbert space norm. However, their proofs are specific to linear problems and rely on the use of the truncated SVD estimator and the approximation properties of finite-dimensional projections. Hence, they cannot easily be transferred to other estimators, composite null hypotheses, or nonlinear problems.

In \cite[Theorem 8.13]{GhoVaa:2017}, it is shown that prior concentration and posterior contraction at an exponential speed imply the existence of tests. The proof is very brief but follows the proof of \cite[Theorem 6.22]{GhoVaa:2017}, which is presented in detail.
However, \cite[Theorem 8.13]{GhoVaa:2017} cannot be used to establish distinguishability because the partition of the space constructed therein is simply defined as the values of $\theta$ for which the size of the test is above or below a certain value. A more explicit and direct description of the sets is not given, and the sets do not correspond to an alternative that is separated by the hypothesis in some metric.

To the authors' knowledge, the minimax separation rate for goodness-of-fit testing in a nonparametric nonlinear indirect setting is unknown.
In \cite[Section 6.2.4]{GinNic:2016}, nonparametric goodness-of-fit testing by means of infimum tests is studied. Infimum tests are based upon tests designed for testing simple null hypotheses, and accept a composite null hypothesis $H_0$ as soon as one of the elements of $H_0$ is accepted by the underlying simple test.
In \cite[Section 6.2.1]{GinNic:2016}, some tests for simple hypotheses $\theta = \theta_0$ are constructed---specifically plug-in tests, $\chi^2$-, and $U$-statistic tests---and the minimax separation rates of these tests are studied.
A plug-in test rejects if the distance between an underlying estimator $\widehat{\theta}_N$ for $\theta$ and $\theta_0$ exceed a critical value, and accepts otherwise.
The $\chi^2$- and $U$-statistic tests, on the other hand, reject based upon the distance between a wavelet estimator and a wavelet approximation of $\theta_0$.
While the results for infimum plug-in tests such as in \cite[Proposition 6.2.13]{GinNic:2016} are not limited to the direct or linear case, they require certain uniform convergence properties of the underlying estimators, and these properties are only established for the direct case and a wavelet estimator, see \cite[Proposition 5.1.7]{GinNic:2016}.
In the text after \cite[Proposition 6.2.18]{GinNic:2016}, it is argued that $\chi^2$- and $U$-statistic tests do in fact achieve the minimax separation rate in the direct case if the hypothesis is bounded and consists of functions with sufficient Besov smoothness.
Since $\chi^2$-statistic tests are only defined for function-valued observations, they cannot directly be used in the random observation setting.
The results for $U$-statistic tests, in turn, are formulated specifically for density estimation, so that they do not easily generalise to an inverse problem setting.

In \cite{HarWahRasHui:2021}, the performance of Bayesian goodness-of-fit tests is studied numerically in inference for covariate-to-parameter mappings under the two-compartment model, a model widely used in pharmacokinetics.
In \cite{Nic:2023}, a theory of nonparametric Bayesian inference for nonlinear inverse problems with random observations is presented, including posterior contraction results; see also \cite{vanderVaavanZan:2000,GiorNic:2020,NicvandeGeerWang:2020,MonNicPat:2021,Kekk:2022}, for example.
However, the results in \cite{Nic:2023} are stated only for one arbitrary fixed true parameter $\theta_0$, and thus they cannot be directly combined with the theory from \cite{GinNic:2016} for infimum plug-in tests to obtain distinguishability.
In \cite{Lie:2024}, posterior contraction rates as well as convergence rates for the posterior mean are established for a class of inverse problems governed by ordinary differential equation initial value problems (ODE-IVPs). This class contains in particular inference for covariate-to-parameter mappings and the same two-compartment model as in \cite{HarWahRasHui:2021}.

\subsection{Contributions}

To the authors' knowledge, the minimax separation rate for nonparametric goodness-of-fit testing in a nonlinear inverse problems setting with random observations is unknown. Thus a natural goal is to bound the separation rate by establishing distinguishability for alternatives that are separated from the null hypothesis at a certain rate. In this work, we accomplish this goal using infimum plug-in tests based upon Bayesian estimators, under the assumption that the null hypothesis is contained in a suitably bounded subset of the parameter space.
Our main contributions are the following:

\begin{enumerate}
	\item We establish general quantitative non-asymptotic distinguishability results for infimum plug-in tests and composite null hypotheses under a uniform concentration inequality for the underlying estimator. These results yield distinguishability at the convergence rate of the underlying estimator.
	
	\item We prove uniform posterior contraction and uniform convergence of the posterior mean in $L^2_\zeta$, Sobolev, and supremum norms, for suitable probability measures $\zeta$ and for general nonlinear inverse problems with random observations and Gaussian priors. We then establish distinguishability at the posterior contraction rate for infimum plug-in tests based on the posterior mean or on maximum a posteriori (MAP) estimators. All these results are quantitative and non-asymptotic in the sense that explicit bounds for the involved probabilities are given.
	
	\item We apply our general results to the class of inverse problems governed by linear ODE-IVPs that was investigated in \cite{HarWahRasHui:2021,Lie:2024} to conclude posterior contraction and distinguishability in the $L^2_\zeta$, Sobolev, and supremum norm metrics for this class.
\end{enumerate}

\subsection{Structure}

This article is structured as follows.
First, we motivate the setting of later sections by presenting inference for covariate-to-parameter mappings and the two-compartment model in \cref{sec:motivation}.
In \cref{sec:dist_plug_in}, we prove general distinguishability results for infimum plug-in tests and discuss the concentration inequalities that are known for different estimators in an inverse problem set-up.
In \cref{sec:nonlin_ip}, we consider general nonlinear inverse problems with random observations, and show that uniform convergence of the posterior mean follows from posterior contraction. We establish $L^2_\zeta$, Sobolev, and supremum norm posterior contraction and use it to conclude distinguishability. Moreover, we discuss uniform convergence of MAP estimators and show that it implies distinguishability.
In \cref{sec:app_ode_ivp}, we then turn our attention towards inverse problems governed by ODE-IVPs, prove posterior contraction and distinguishability in this setting, and discuss the obtained rates.
We conclude in \cref{sec:conclusion}.
Longer proofs of results from \Cref{sec:nonlin_ip} are collected in \cref{sec_deferred_proofs}.
We discuss in \cref{sec:dist_simple_hyp} how distinguishability for simple null hypotheses can be derived from proofs in the literature.

\subsection{Notation}
\label{subsection_notation}

For the complement of a set $A$ in a set $B \supseteq A$ we use the notation $A^\complement \coloneqq B \setminus A$.
We denote the natural logarithm of $x > 0$ by $\log x$.
We write $a\vee b\coloneqq \max\{a,b\}$ for $a,b\in\R$.
The relation $a \lesssim b$ denotes an inequality $a \le Cb$ that holds up to a fixed constant $C > 0$.
We denote the substitution of $a$ with $b$ by $a\leftarrow b$. 
For an arbitrary normed vector space $(\calR,\norm{\cdot}_{\calR})$ and $M>0$, $B_{\calR}(M)\coloneqq\{ x\in\calR:\norm{x}_{\calR}\leq M\}$.
We denote the continuous embedding of a normed vector space $V$ into another normed vector space $W$ by $V \hookrightarrow W$.
For a measure $\mu$ on a bounded smooth domain $\X \subset \R^{d_x}$, a normed vector space $V$, and $p\in [1,\infty)$, let $L^p_\mu = L^p_\mu(\X) = L^p_\mu(\X,V)$ denote the space of $p$-integrable functions on $\X$ with respect to the measure $\mu$, with the corresponding norm
\[ \norm{f}_{L^p_\mu} = \left(\int_\X \norm{u(x)}_V^p \di\mu(x)\right)^\frac{1}{p}. \]
We define $L^\infty_\mu(\X,V)$ and $\norm{\cdot}_{L^\infty_\mu}$ in the usual way.
When $\mu$ is the Lebesgue measure on $\R^{d_x}$, we write $L^p = L^p(\X) = L^p(\X,V)$, for $p\in[1,\infty]$, and $\norm{f}_\infty\equiv \sup_{x\in\X}\norm{f(x)}_V$ for the supremum norm.
Let $H^\alpha = H^\alpha(\X) = H^\alpha(\X,V)$ denote the standard Sobolev space of order $\alpha \ge 0$ based upon the Lebesgue space $L^2(\X,V)$. Furthermore, let $H_\cpt^\alpha(\X,V)$ denote the completion of the compactly supported smooth functions $C_\cpt^\infty(\X,V)$ with respect to the $H^\alpha$ norm.
For more details regarding Sobolev spaces, see e.g. \cite[Appendix A.1]{Nic:2023}.

\section{Motivating example}
\label{sec:motivation}

\subsection{Inference for covariate-to-parameter mappings}

Consider the statistical model
\begin{equation}
	\label{eq:forward_model}
	y_i = \widetilde{G}(p_i,x_i) + \epsilon_i, \quad i \in \{1,\dots,N\},
\end{equation}
with i.i.d.~observations $(x_i,y_i)_{i=1}^N$ consisting of \emph{covariates} $x_i \in \X \subseteq \R^{d_x}$ and noisy \emph{observations} $y_i \in \Y \subseteq \R^{d_y}$, unobserved \emph{parameters} $p_i \in \calP \subseteq \R^{d_p}$, a nonlinear mapping $\widetilde{G}:\calP \times \X \to \R^{d_y}$ called the \emph{mechanistic model}, and \emph{noise} $\epsilon_i$.
The relationship between covariates $x$ and parameters $p$ in turn is described by a \emph{covariate-to-parameter mapping} (CPM) or \emph{covariate model} $\theta:\X \to \calP$ via
\begin{equation*}
	p_i = \theta(x_i), \quad i \in \{1,\dots,N\}.
\end{equation*}

We assume that the mechanistic model $\widetilde{G}$ is known. A typical example is the solution operator to a system of ODEs observed at different points in time $t_1,\dots,t_{d_y}$. In this case, each $p_i$ is a vector of unknown parameters of the ODE model. The CPM $\theta$, on the other hand, is assumed to be \emph{unknown} --- it is the quantity of primary interest. Inferring the CPM $\theta$, given observations $(x_i,y_i)_{i=1}^N$, poses an inverse problem.
We moreover assume that the observed covariates are realisations of a random variable $X$ with probability distribution $\mu_X$, i.e., we consider the case of \emph{random design}.

We restrict the admissible CPMs to a subset $\Theta$ of some space of functions from $\X$ to $\calP$.
We can then interpret the mechanistic model $\widetilde{G}$ as a forward model $G$ on $\Theta$ by setting $[G(\theta)](x) = \widetilde{G}(\theta(x),x)$ for every $\theta \in \Theta$ and $x \in \X$.

\subsection{Goodness-of-fit testing}

We aim to evaluate the hypothesis that the true CPM $\theta$ belongs to a given parametric class of functions
\[ \Theta_0 \coloneqq \{\theta_\tau: \tau \in \T\} \subset \Theta \]
with nuisance parameter $\tau \in \T \subseteq \R^{d_\tau}$. This corresponds to the testing problem
\begin{equation}
	\label{eq:testing_problem}
	H_0: \theta \in \Theta_0 \quad \text{vs.} \quad H_1: \theta \in \Theta \setminus \Theta_0.
\end{equation}
Since such an alternative $H_1$ is often asymptotically indistinguishable from the null hypothesis $H_0$, one usually considers an alternative $\theta \in \Theta_1$ with a set $\Theta_1 \subset \Theta$ that is \emph{separated away from} $\Theta_0$ in some metric. In addition, it may be necessary or helpful to further restrict $\Theta_1$ to be contained in a bounded set $S \subset \Theta$ that also contains $\Theta_0$.

Let $\widehat{\theta}$ be an estimator for $\theta$.
We will consider so-called \emph{infimum plug-in tests}
\[ \Psi = \mathbf{1}_{T\,>\,t} = \begin{cases}
	1 & \text{if}~T > t, \\
	0 & \text{if}~T \le t
\end{cases} \]
which reject if the distance
\begin{equation*}
	T = \inf_{\tau \in \T} d(\widehat{\theta},\theta_\tau)
	\label{eq:plug_in_stat}
\end{equation*}
between the estimator and the parametric class $\Theta_0$ in some metric $d$ on $\Theta$ exceeds a critical value $t > 0$.

\subsection{Covariates and mechanistic model}
\label{sec:cov_mech_mod}

We consider a specific example of a goodness-of-fit testing problem from pharmacokinetics that was studied in \cite{HarWahRasHui:2021}. In this example, the pharmacokinetics of a drug that has been administered to a patient are described using a \emph{two-compartment model}. In this model, the body is partitioned into the `central' and `peripheral' compartments that correspond to the bloodstream and tissue respectively. The time evolution of the concentrations $s_1(t)$ and $s_2(t)$ at time $t$ of a drug in the bloodstream and tissue are modelled by the ODE-IVP
\begin{align*}
	\frac{\di}{\di t} \begin{bmatrix} s_1(t,p) \\ s_2(t,p) \end{bmatrix}
	&= \begin{bmatrix} -e^{p_1 - p_2} - e^{p_3 - p_2} & e^{p_3 - p_2} \\
	e^{p_3 - p_4} & -e^{p_3 - p_4} \end{bmatrix}
	\begin{bmatrix} s_1(t,p) \\ s_2(t,p) \end{bmatrix}, \\
	\begin{bmatrix} s_1(0,p) \\ s_2(0,p) \end{bmatrix}
	&= \begin{bmatrix} D_0 w_0 e^{-p_2} \\ 0 \end{bmatrix}.
\end{align*}
Here, $D_0$ is the drug dosage administered at time $0$, and $w_0$ is a reference weight.
The parameters $p = (p_1, \dots, p_4) \in \R^4 =: \calP$ of the ODE are typically \emph{unobservable}. They can be interpreted as
\begin{equation*}
	p_1 = \ln \frac{\CL}{w}, \quad
	p_2 = \ln \frac{V_1}{w}, \quad
	p_3 = \ln \frac{Q}{w}, \quad
	p_4 = \ln \frac{V_2}{w},
\end{equation*}
where $V_1$ and $V_2$ are the volumes of each compartment, $\CL$ is the rate at which the drug is cleared or eliminated from the bloodstream, $Q$ describes inter-compartment drug transport, and $w$ is the weight of the patient.
The quantity 
\[ \widetilde{G}(p,x) = \left(\ln s_1(t_1), \dots, \ln s_1(t_{d_y})\right) \]
is the logarithmic drug concentration in the first compartment at $d_y$ fixed points in time $t_1, \dots, t_{d_y}$.
The overall dimension of each data vector $y_i$ given by \eqref{eq:forward_model} is thus $d_y$.

In \cite{HarWahRasHui:2021}, the covariates are the age $a$ and body weight $w$ of the patient. This results in the covariate vector $x = (a,w) \in (0,\infty)^2$.
The goal is to test whether the weight-normalised elimination clearance rate $\CL^* \coloneqq \CL w^{-\frac34}$ is a function of $a$ of a certain form.
For example, one may test whether $\CL^*$ is a saturable exponential function $\CL^*(a) = (1 - \tau_1 e^{-\tau_2 a})\CL_\text{max}^*$ that is constant in $w$, for some $\tau_1, \tau_2, CL_\text{max}^* \in \R$, and if the weight-normalised quantities $Q^* \coloneqq Qw^{-\frac34}$, $V_1^* \coloneqq V_1w^{-1}$, and $V_2^* \coloneqq V_2w^{-1}$ are constant in both $a$ and $w$.
In this case, the null hypothesis in \eqref{eq:testing_problem} takes the form $\Theta_0\coloneqq \{\theta_\tau: \tau \in \T\}$, where
\begin{equation}
	\label{eq:def_Theta0_sat_exp}
	\theta_\tau(a,w) = \left(\ln \frac{(1 - \tau_1 e^{-\tau_2 a}) \CL_\text{max}^*}{w^\frac14}, \ln V_1^*, \ln \frac{Q^*}{w^\frac14}, \ln V_2^*\right)
\end{equation}
and $\tau = (\tau_1, \tau_2, \CL_\text{max}^*, V_1^*, Q^*, V_2^*) \in \T \subset \R^6$.
For another example, one can replace the condition that $\CL^*$ is a saturable exponential function with the condition that $\CL^*$ is an affine linear function $\CL^*(a) = \tau_1 + \tau_2 a$ that is constant in $w$.
In this case, the null hypothesis $\Theta_0$ in \eqref{eq:testing_problem} is defined by functions $\theta_\tau$ of the form
\begin{equation}
	\label{eq:def_Theta0_aff_lin}
	\theta_\tau(a,w) = \left(\ln \frac{\tau_1 + \tau_2 a}{w^\frac14}, \ln V_1^*, \ln \frac{Q^*}{w^\frac14}, \ln V_2^*\right),
\end{equation}
for $\tau = (\tau_1, \tau_2, \CL_\text{max}^*, V_1^*, Q^*, V_2^*) \in \T \subset \R^6$.
Other examples of $\Theta_0$ are possible; see e.g. \cite[p. 569]{HarWahRasHui:2021}.

\section{Distinguishability for plug-in tests} 
\label{sec:dist_plug_in}

Let us investigate the plug-in testing approach described in sections 6.2.1 
and 6.2.4 of \cite{GinNic:2016} for simple and composite null hypotheses.
While this approach is not expected to yield a separation rate better than the minimax rate of estimation, it is still useful as a way to directly translate established convergence rates of estimators into bounds on separation rates. In addition, the resulting tests can be readily evaluated in practice.

We consider a statistical experiment with outcome $X \in \X$ whose distribution $P_\theta$ is indexed by an infinite-dimensional parameter $\theta$ from a set of admissible parameters $\Theta$.
For a given set $S \subset \Theta$ of candidate parameters, a null hypothesis $H_0 \subset S$, a metric $d$ on $S$, and $\delta > 0$, consider the alternative hypothesis
\begin{equation}
\label{alternative_hypothesis}
 H_1 = H_1(H_0,S,d,\delta) \coloneqq \left\{\theta \in S: d(\theta,H_0) \geq \delta\right\},
\end{equation}
where $d(\theta,H_0) \coloneqq \inf_{h \in H_0} d(\theta,h)$.
We denote the combined type $1$ and type $2$ error of a hypothesis test $\Psi$ for testing $H_0$ against $H_1$ by
\begin{equation}
 \label{gamma}
 \gamma(\Psi,H_0,S,d,\delta) \coloneqq \sup_{\theta \in H_0} P_\theta(\Psi = 1) + \sup_{\theta \in H_1(H_0,S,d,\delta)} P_\theta(\Psi = 0).
\end{equation}
Recall \cite[Definition 6.2.1]{GinNic:2016}: for $H_1$ and $\gamma$ as in \eqref{alternative_hypothesis} and \eqref{gamma}, a sequence $(\delta_N)_{N\in\N}$ of nonnegative real numbers is the \emph{minimax $d$-separation rate} for testing $H_0$ against $H_1$ if 
\begin{equation}
	\label{eq:def_dist}
	\lim_{N \to \infty} \inf_{\Psi} \gamma(\Psi,H_0,S,d,\delta_N) = 0
\end{equation}
and if for any nonnegative real sequence $(\delta_N^\prime)_{N\in\N}$ such that $\delta_N^\prime/\delta_N\to 0$,
\begin{equation*}
	\liminf_{N \to \infty} \inf_{\Psi} \gamma(\Psi,H_0,S,d,\delta_N^\prime) > 0
\end{equation*}
holds, where both infima are taken over all measurable functions $\Psi:\X \to \{0,1\}$.
The minimax $d$-separation rate is the minimal i.e. fastest rate at which a hypothesis and an alternative must be separated such that the combined type $1$ and type $2$ error $\gamma$ from \eqref{gamma} converges to $0$.

Now recall from \cite[pp. 64]{IngSus:2003} that for $H_1$ and $\gamma$ as in \eqref{alternative_hypothesis} and \eqref{gamma}, a sequence $(\delta_N)_{N\in\N}$ of nonnegative real numbers is a \emph{minimax $d$-distinguishability rate} for testing $H_0$ against $H_1$ if \eqref{eq:def_dist} holds, where the infimum is taken over all measurable functions $\Psi:\X \to \{0,1\}$.
By these definitions, a minimax distinguishability rate $(\delta_N)_{N\in\N}$ represents an upper bound for the minimax separation rate $(\epsilon_N)_{N\in\N}$, in the sense that $(\epsilon_N/\delta_N)_{N\in\N}$ is bounded from above.
\begin{definition}
 \label{definition_tests_that_distinguish}
 Let $H_1$ and $\gamma$ be as in \eqref{alternative_hypothesis} and \eqref{gamma}, and let $(\delta_N)_{N\in\N}$ be a sequence of nonnegative real numbers.
 A sequence $(\Psi_N)_{N\in\N}$ of tests is said to \emph{distinguish} between $H_0$ and $H_1$ at the rate $(\delta_N)_{N\in\N}$ if 
\begin{equation*}
  \lim_{N \to \infty} \gamma(\Psi_N,H_0,S,d,\delta_N) = 0.
\end{equation*}
\end{definition}
By the definition of a minimax $d$-distinguishability rate, the existence of tests $(\Psi_N)_{N\in\N}$ that distinguish between $H_0$ and $H_1$ at rate $(\delta_N)_{N\in\N}$ implies that a minimax $d$-distinguishability rate must be at least as fast as the rate $(\delta_N)_{N\in\N}$, because of the infimum in \eqref{eq:def_dist}.

Let $(\widehat{\theta}_N)_{N \in \N}$ be a sequence of measurable functions of $X$ that are estimators for some fixed $\theta$.
For different estimators, concentration inequalities are known in an inverse problems setting; see \Cref{subsection_concentration_inequalities_in_literature} below for some examples. A sequence $(d(\widehat{\theta}_N,\theta))_{N\in\N}$ is said to be \emph{stochastically bounded with respect to $(\delta_N)_{N\in\N}$} 
if for every $\epsilon > 0$, there exist $K > 0$ and $N_0 \in \N$ such that for all $N \ge N_0$,
\begin{equation}
	\label{eq:simple_conc_ineq}
	P_\theta\left(d(\widehat{\theta}_N,\theta) > K \delta_N\right) \le \epsilon.
\end{equation}
Stochastic boundedness is a weaker property than stochastic convergence to zero, because for every given $\epsilon$, only the \emph{existence} of a $K > 0$ for which \eqref{eq:simple_conc_ineq} holds is required. In particular, \eqref{eq:simple_conc_ineq} need not hold for every $K > 0$ and large enough $N$.
Stochastic boundedness with respect to $(\delta_N)_{N\in\N}$ is, moreover, equivalent to the convergence
\[ P_\theta\left(d(\widehat{\theta}_N,\theta) > \delta_N^\prime\right) \to 0 \]
for any rate $(\delta_N^\prime)_{N\in\N}$ that is slower than $(\delta_N)_{N\in\N}$ in the sense that $\delta_N^\prime/\delta_N \to \infty$.
If \eqref{eq:simple_conc_ineq} holds, then we also refer to the sequence $(\delta_N)_{N\in\N}$ as a \emph{convergence rate} for the sequence of estimators $(\widehat{\theta}_N)_{N \in \N}$.
Such convergence rates are shown for the Tikhonov--Phillips or penalised least squares estimator with general Hilbert space norm penalty \cite{AbhHelMue:2023} and with Sobolev norm penalty \cite{Siebel:2024}, as well as for the posterior mean or conditional mean estimator based upon a Gaussian prior; see \cite{Nic:2023} and the references therein for results concerning Bayesian nonlinear inverse problems.

\begin{remark}
\label{remark_growth_rate_K_wrt_epsilon}
A concentration inequality of the form \eqref{eq:simple_conc_ineq} can always be adapted to hold for all $N \in \N$, by choosing $K$ larger if necessary, possibly at the expense of a faster growth rate with respect to $\varepsilon$ of $K$. In many cases, it is of interest to determine the rate at which $K = K(\epsilon)$ in \eqref{eq:simple_conc_ineq} must grow as $\epsilon \to 0$, or whether $K$ can be chosen to be constant. 
\end{remark}

In order to bound the error of plug-in tests, one can assume that a concentration inequality holds \emph{uniformly} on a set $S \subseteq \Theta$ of candidate truths that contains both the null hypothesis $H_0$ and the alternatives $(H_{1,N})_{N \in \N}$. 
\begin{definition}[Uniform stochastic boundedness]
 \label{definition_uniform_stochastic_boundedness}
Let $S$ be nonempty and let $(\widehat{\theta}_N)_{N\in\N}$ be a sequence of estimators. The sequence $(d(\widehat{\theta}_N,\theta))_{N\in\N}$ is said to be \emph{uniformly stochastically bounded} with respect to $(\delta_N)_{N\in\N}$ over $\theta\in S$, if for every $\epsilon > 0$, there exist $K=K(\epsilon) > 0$ and $N_0=N_0(\epsilon) \in \N$ such that
\begin{equation}
	\label{eq_uniform_stochastic_boundedness}
	\sup_{\theta \in S} P_\theta\left(d(\widehat{\theta}_N,\theta) \geq K \delta_N\right) \le \epsilon,\quad \forall N \ge N_0.
\end{equation}
 The sequence $(d(\widehat{\theta}_N,\theta))_{N\in\N}$ is said to be \emph{strongly uniformly stochastically bounded} with respect to $(\delta_N)_{N\in\N}$ over $\theta\in S$, if there exists some probability decay rate $(\epsilon_N)_{N\in\N}$ and some $K>0$ and $N_0\in\N$ that do not depend on $(\delta_N)_{N\in\N}$ or $(\epsilon_N)_{N\in\N}$, such that
\begin{equation}
	\label{eq_strong_uniform_stochastic_boundedness}
	\sup_{\theta \in S} P_\theta\left(d(\widehat{\theta}_N,\theta) \geq  K \delta_N\right) \le \epsilon_N, \quad \forall N \ge N_0.
\end{equation}
 If \eqref{eq_uniform_stochastic_boundedness} or \eqref{eq_strong_uniform_stochastic_boundedness} holds, then we refer to $(\delta_N)_{N\in\N}$ as a \emph{uniform convergence rate} for $(\widehat{\theta}_N)_{N \in \N}$.
 \end{definition}
Uniform stochastic boundedness is a stronger assumption than stochastic boundedness for each $\theta \in S$, because $K$ in \eqref{eq_uniform_stochastic_boundedness} is not allowed to depend on $\theta$, whereas $K$ may depend on $\theta$ in \eqref{eq:simple_conc_ineq}.
One motivation for considering strong uniform stochastic boundedness is that it controls the growth rate of $K(\epsilon)$ with respect to $\epsilon$; cf. \Cref{remark_growth_rate_K_wrt_epsilon}. In \cref{corollary_uniform_L2_convergence_posterior_mean} and \cref{corollary_uniform_Sobolev_convergence_posterior_mean}, we shall see instances of strong uniform stochastic boundedness.

One sufficient condition for uniform stochastic boundedness is that the expected error of the estimators satisfy
\begin{equation}
	\label{eq:conv_rate_exp}
	\sup_{\theta \in S} \Exp_\theta [d(\widehat{\theta}_N,\theta)] \le \delta_N, \quad \forall N \in \N,
\end{equation}
see e.g. equation (6.25) and equation (6.52) in \cite{GinNic:2016}. If \eqref{eq:conv_rate_exp} holds, then by Markov's inequality,
\[ \sup_{\theta \in S} P_\theta\left(d(\widehat{\theta}_N,\theta) \geq  K\delta_N\right) \le \sup_{\theta \in S} \frac{1}{K\delta_N} \Exp_\theta  [d(\widehat{\theta}_N,\theta)] \le \frac{1}{K} = \epsilon \]
for $K=K(\epsilon) \coloneqq \epsilon^{-1}$ and all $N \in \N$. This gives an example in which $K(\epsilon)$ in \Cref{definition_uniform_stochastic_boundedness} is inversely proportional to $\epsilon$; see \Cref{remark_growth_rate_K_wrt_epsilon}. The condition \eqref{eq:conv_rate_exp} was used to prove bounds on the combined type $1$ and type $2$ errors in Proposition 6.2.2 and Proposition 6.2.13 in \cite{GinNic:2016}, for example.
In \Cref{errors_plug_in_general,corollary_distinguishability_at_slower_rates}, we shall prove bounds on the type $1$ and type $2$ errors under hypotheses that are weaker than \eqref{eq:conv_rate_exp}.

We shall now use concentration inequalities for estimator errors to obtain distinguishability results. For a set $H_0$ to be determined, a sequence of nonnegative numbers $(t_N)_{N\in\N}$, and a sequence of estimators $(\widehat{\theta}_N)_{N\in\N}$, define the \emph{infimum plug-in tests}
\begin{equation}
\label{eq_infimum_plug_in_tests}
  \Psi_N \coloneqq \mathbf{1}_{T_N\,>\,t_N},\quad   T_N \coloneqq d(\widehat{\theta}_N,H_0) = \inf_{h \in H_0} d(\widehat{\theta}_N,h),\qquad N\in\N .
\end{equation}
We shall use the following lemma for our distinguishability results.
\begin{lemma}[Bound on type $1$ and type $2$ errors]
	\label{errors_plug_in_general}
	Let $S\subseteq \Theta$ be nonempty, let $(t_N)_{N\in\N}$ be a sequence of nonnegative numbers, and let $(\widehat{\theta}_N)_{N\in\N}$ be a sequence of estimators.
	Then the type $1$ and type $2$ errors of the tests $(\Psi_N)_{N\in\N}$ from \eqref{eq_infimum_plug_in_tests} for testing
	\begin{equation}   
	\label{eq_null_hypothesis_alternative_hypothesis}
	 H_0 \subseteq S \quad \text{vs.} \quad 
	H_{1,N} \subseteq \left\{\theta \in S: d(\theta,H_0) \ge 2 t_N\right\}
	\end{equation}
	satisfy
    \begin{equation}
    \label{eq_bound_on_errors}
     	 \sup_{\theta \in H_0} P_\theta(\Psi_N = 1)\bigvee \sup_{\theta \in H_{1,N}} P_\theta(\Psi_N = 0) \le \sup_{\theta\in S} P_\theta\left(d(\widehat{\theta}_N,\theta)\geq t_N\right),\qquad \forall N\in\N.
    \end{equation}
\end{lemma}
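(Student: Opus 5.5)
The plan is to bound the two error types separately by event inclusions. Each inclusion uses only the definition of $T_N$ as an infimum and the triangle inequality for $d$. Fix $N\in\N$. It suffices to show two things. First, for every $\theta\in H_0$, $\{\Psi_N=1\}\subseteq\{d(\widehat{\theta}_N,\theta)\geq t_N\}$. Second, for every $\theta\in H_{1,N}$, $\{\Psi_N=0\}\subseteq\{d(\widehat{\theta}_N,\theta)\geq t_N\}$. Since $H_0\subseteq S$ and $H_{1,N}\subseteq S$, taking $P_\theta$ of these inclusions and then suprema over $\theta$ gives \eqref{eq_bound_on_errors}.

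For the type $1$ error, let $\theta\in H_0$. Because $\theta$ is itself one of the elements over which the infimum in \eqref{eq_infimum_plug_in_tests} is taken, we have $T_N\le d(\widehat{\theta}_N,\theta)$. On $\{\Psi_N=1\}$ we have $T_N>t_N$, and hence $d(\widehat{\theta}_N,\theta)>t_N$. This event is contained in $\{d(\widehat{\theta}_N,\theta)\ge t_N\}$.

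For the type $2$ error, let $\theta\in H_{1,N}$, so that $d(\theta,H_0)\ge 2t_N$. For any $h\in H_0$, the triangle inequality gives $2t_N\le d(\theta,h)\le d(\theta,\widehat{\theta}_N)+d(\widehat{\theta}_N,h)$. Taking the infimum over $h\in H_0$ yields $2t_N\le d(\widehat{\theta}_N,\theta)+T_N$. On $\{\Psi_N=0\}$ we have $T_N\le t_N$, and therefore $d(\widehat{\theta}_N,\theta)\ge t_N$. This is exactly why the non-strict inequality $\geq t_N$ appears on the right-hand side of \eqref{eq_bound_on_errors}.

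There is no real obstacle. The only point needing care is the strict versus non-strict inequalities, which are handled as above. A possible further technicality is measurability of $T_N$, since it is an infimum over a possibly uncountable set. The plan is to take this as implicit in the statement that $\Psi_N$ is a test. Alternatively, one can read $P_\theta$ of the events as outer probabilities, and the inclusions still give the bound.
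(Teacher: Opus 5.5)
Your proposal is correct and follows essentially the same argument as the paper: for $\theta\in H_0$ you bound the type $1$ error by the event inclusion coming from $T_N\le d(\widehat{\theta}_N,\theta)$. For $\theta\in H_{1,N}$ you bound the type $2$ error by applying the triangle inequality, taking the infimum over $h\in H_0$ to get $2t_N-d(\widehat{\theta}_N,\theta)\le T_N\le t_N$, and then using $H_0,H_{1,N}\subseteq S$ (your remark on measurability of $T_N$ concerns a point the paper also leaves implicit).
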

\begin{proof}[Proof of \Cref{errors_plug_in_general}]
The proof uses the ideas from the proof of \cite[Proposition 6.2.2]{GinNic:2016}. Let $N\in\N$ be arbitrary.
By the definitions \eqref{eq_infimum_plug_in_tests} of $\Psi_N$ and $T_N$, we have $P_\theta (\Psi_N=1)=P_\theta(\inf_{h\in H_0}d(\widehat{\theta}_N,h)>t_N)$.
Now, if $\inf_{h\in H_0}d(\widehat{\theta}_N,h)>t_N$, then for every $\theta\in H_0$, $d(\widehat{\theta}_N,\theta)>t_N$.
By the preceding argument and by the containment $H_0\subseteq S$, the type $1$ error bound in \eqref{eq_bound_on_errors} follows from
\begin{align*}
 \sup_{\theta \in H_0} P_\theta \left(\Psi_N=1\right)=\sup_{\theta \in H_0}P_\theta\left(\inf_{h\in H_0}d(\widehat{\theta}_N,h)>t_N\right)\leq \sup_{\theta \in H_0}P_\theta \left( d(\widehat{\theta}_N,\theta)>t_N\right).
\end{align*}
Again by the definitions \eqref{eq_infimum_plug_in_tests} of $\Psi_N$ and $T_N$, we have $P_\theta (\Psi_N=0)=P_\theta(\inf_{h\in H_0}d(\widehat{\theta}_N,h)\leq t_N)$. By the triangle inequality, $d(\theta,h) \le d(\theta,\widehat{\theta}_N) + d(\widehat{\theta}_N,h)$ holds for all $\theta,h \in \Theta$.
By taking the infimum of both sides of this inequality over $h\in H_0$ and subtracting $d(\theta,\widehat{\theta}_N)$, we obtain 
\begin{equation*}
  \inf_{h \in H_0} d(\theta,h) - d(\theta,\widehat{\theta}_N) \leq \inf_{h \in H_0} d(\widehat{\theta}_N,h),\quad \forall \theta\in\Theta.
\end{equation*}
Thus, if $ \inf_{h \in H_0} d(\widehat{\theta}_N,h)\leq t_N$, then $\inf_{h \in H_0} d(\theta,h) - d(\theta,\widehat{\theta}_N) \leq t_N$.
Now, $\inf_{h \in H_0} d(\theta,h)\geq 2t_N$ holds for every $\theta \in H_{1,N}$, because $H_{1,N}\subseteq \{\theta \in S: d(\theta,H_0)\geq 2t_N\}$.
By the preceding observations, it follows that $2t_N- d(\theta,\widehat{\theta}_N) \leq t_N$ for every $\theta\in H_{1,N}$. Thus, the type $2$ error for $\theta \in H_{1,N}$ is bounded by
\begin{equation*}
	P_\theta(\Psi_N = 0) = P_\theta\left(\inf_{h \in H_0} d(\widehat{\theta}_N,h) \le t_N\right) \le P_\theta\left(2 t_N - d(\widehat{\theta}_N,\theta) \le t_N\right)= P_\theta\left(d(\widehat{\theta}_N,\theta) \ge t_N\right).
\end{equation*}
Taking the supremum over $\theta\in H_{1,N}$ completes the proof of \eqref{eq_bound_on_errors}.
\end{proof}
We now use \Cref{errors_plug_in_general} to show that uniform stochastic boundedness as defined by \eqref{eq_uniform_stochastic_boundedness} implies that the plug-in tests in \eqref{eq_infimum_plug_in_tests} distinguish between $H_0$ and $H_1$; cf. \Cref{definition_tests_that_distinguish}.
\begin{theorem}[Distinguishability under uniform stochastic boundedness]
\label{theorem_distinguishability_under_uniform_stochastic_boundedness}
Let $S\subseteq \Theta$ be nonempty. Let $(\widehat{\theta}_N)_{N\in\N}$ be a sequence of estimators and $(\delta_N)_{N\in\N}$ be a sequence of nonnegative numbers, such that $(d(\widehat{\theta}_N,\theta))_{N\in\N}$ is uniformly stochastically bounded with respect to $(\delta_N)_{N\in\N}$ over $\theta\in S$. 
Then for an arbitrary sequence $(\epsilon_N)_{N\in\N}$ of nonnegative numbers converging to zero and for the corresponding sequences $(K(\epsilon_N))_{N\in\N}$ and $(N_0(\epsilon_N))_{N\in\N}$ given by \eqref{eq_uniform_stochastic_boundedness}, the type $1$ and type $2$ errors of the tests $(\Psi_N)_{N\in\N}$ from \eqref{eq_infimum_plug_in_tests} with $t_N\leftarrow K(\epsilon_N)\delta_N$, $N\in\N$, for testing
	\begin{equation}   
	\label{eq_null_hypothesis_alternative_hypothesis_specific}
	 H_0 \subseteq S \quad \text{vs.} \quad 
	H_{1,N} \subseteq \left\{\theta \in S: d(\theta,H_0) \ge 2K(\epsilon_N)\delta_N\right\} 
	\end{equation}
	satisfy
\begin{equation}
\label{eq_type1_type2_errors_converge_to_zero}
  \sup_{\theta \in H_0} P_\theta(\Psi_N = 1) \bigvee \sup_{\theta \in H_{1,N}} P_\theta(\Psi_N = 0)\leq \epsilon_M,\qquad \forall N\geq N_0(\epsilon_M),\ M\in\N.
\end{equation}
In particular, $\lim_{N\to\infty}\gamma (\Psi_N,H_0,S,d,K(\epsilon_N)\delta_N)=0$, and the tests $(\Psi_N)_{N\in\N}$ distinguish between $H_0$ and $(H_{1,N})_{N\in\N}$ at the rate $(K(\epsilon_N)\delta_N)_{N\in\N}$.
\end{theorem}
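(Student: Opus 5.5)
The plan is to combine \Cref{errors_plug_in_general} with \Cref{definition_uniform_stochastic_boundedness}. First, apply \Cref{errors_plug_in_general} with $t_N\leftarrow K(\epsilon_N)\delta_N$. The alternatives in \eqref{eq_null_hypothesis_alternative_hypothesis_specific} are exactly of the form \eqref{eq_null_hypothesis_alternative_hypothesis} with $2t_N=2K(\epsilon_N)\delta_N$. Hence, for every $N\in\N$,
\begin{equation*}
\sup_{\theta \in H_0} P_\theta(\Psi_N = 1) \bigvee \sup_{\theta \in H_{1,N}} P_\theta(\Psi_N = 0)\le \sup_{\theta\in S}P_\theta\left(d(\widehat{\theta}_N,\theta)\ge K(\epsilon_N)\delta_N\right).
\end{equation*}
This reduces everything to bounding the right-hand side by the uniform concentration inequality \eqref{eq_uniform_stochastic_boundedness}.

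Second, fix $M\in\N$ and $N\ge N_0(\epsilon_M)$. Inequality \eqref{eq_uniform_stochastic_boundedness} with $\epsilon=\epsilon_M$ gives $\sup_{\theta\in S}P_\theta(d(\widehat{\theta}_N,\theta)\ge K(\epsilon_M)\delta_N)\le\epsilon_M$. To pass from threshold $K(\epsilon_M)\delta_N$ to $K(\epsilon_N)\delta_N$, I would use the event inclusion
\begin{equation*}
\{d(\widehat{\theta}_N,\theta)\ge K(\epsilon_N)\delta_N\}\subseteq\{d(\widehat{\theta}_N,\theta)\ge K(\epsilon_M)\delta_N\},
\end{equation*}
which holds whenever $K(\epsilon_N)\ge K(\epsilon_M)$.

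This is the main obstacle. The statement implicitly requires the choices $\epsilon\mapsto K(\epsilon)$ and $\epsilon\mapsto N_0(\epsilon)$ in \Cref{definition_uniform_stochastic_boundedness} to be made monotonically. I would argue this can be arranged without loss of generality, as follows.
\begin{itemize}
\item Since \eqref{eq_uniform_stochastic_boundedness} remains true after enlarging $K$ or $N_0$, one may first define $K$ and $N_0$ on the dyadic values $\epsilon=2^{-j}$ as nondecreasing sequences in $j$.
\item One then extends them to all $\epsilon$ as step functions, taking for $\epsilon\in[2^{-j},2^{-j+1})$ the value at $2^{-j}$. This makes $K$ and $N_0$ nonincreasing in $\epsilon$.
\item The comparison $K(\epsilon_N)\ge K(\epsilon_M)$ is then needed only in the relevant regime $\epsilon_N\le\epsilon_M$. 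If $(\epsilon_N)$ is not monotone, I would replace it by $\sup_{k\ge N}\epsilon_k$, which is nonincreasing, still tends to zero, and only increases $K(\epsilon_N)$ and the bound. I would state this reduction explicitly at the start of the proof.
\end{itemize}
With monotone $\epsilon_N$, the condition $\epsilon_N\le\epsilon_M$ holds for $N\ge M$; together with $N\ge N_0(\epsilon_M)$ this yields \eqref{eq_type1_type2_errors_converge_to_zero}.

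Finally, for the limit claim, fix $M$. By \eqref{eq_type1_type2_errors_converge_to_zero}, for all large $N$ the quantity $\gamma(\Psi_N,H_0,S,d,K(\epsilon_N)\delta_N)$ is bounded by the sum of the two suprema, hence by $2\epsilon_M$. Thus $\limsup_{N\to\infty}\gamma\le 2\epsilon_M$ for every $M$. Letting $M\to\infty$ and using $\epsilon_M\to0$ gives $\lim_{N\to\infty}\gamma=0$. By \Cref{definition_tests_that_distinguish}, this is distinguishability at the rate $(K(\epsilon_N)\delta_N)_{N\in\N}$.
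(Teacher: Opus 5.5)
Your argument follows the paper's proof: apply \Cref{errors_plug_in_general} with $t_N\leftarrow K(\epsilon_N)\delta_N$, use \eqref{eq_uniform_stochastic_boundedness} at level $\epsilon_M$, then let $M\to\infty$. You have also correctly located the step the paper passes over with the phrase ``by the choice of sequence $(\epsilon_N)_{N\in\N}$''. Going from the threshold $K(\epsilon_M)\delta_N$ to $K(\epsilon_N)\delta_N$ needs $K(\epsilon_N)\ge K(\epsilon_M)$, and this is not a technicality. Take $S=H_0=\{0\}\subset\R$ with $d(x,y)=|x-y|$, $\delta_N=1$, and $\widehat\theta_N$ exponentially distributed with mean $1/N$ under $P_0$. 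Then $K(\epsilon)=\epsilon$, $N_0(\epsilon)=\max\{1,\lceil\epsilon^{-1}\log(1/\epsilon)\rceil\}$ is an admissible choice in \eqref{eq_uniform_stochastic_boundedness}, and for $\epsilon_N=1/N$ every $\Psi_N$ has type $1$ error $e^{-1}$. Since the thresholds depend on $K$, requiring $K(\cdot)$ to be nonincreasing is an extra condition on the choice, not a reduction. What your dyadic construction shows is that an admissible choice of this kind always exists, and that is what makes the corrected statement true.

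Three points in your write-up still need repair.

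\emph{Index range.} Your argument gives the bound $\epsilon_M$ only for $N\ge N_0(\epsilon_M)$ with $\epsilon_N\le\epsilon_M$ (for monotone $(\epsilon_N)$, $N\ge M\vee N_0(\epsilon_M)$). That is not the range displayed in \eqref{eq_type1_type2_errors_converge_to_zero}, and the displayed range is false even for monotone choices. With $\widehat\theta_N$ standard exponential for all $N$, $K(\epsilon)=\log(1/\epsilon)$ for $\epsilon<1$, $N_0\equiv1$ and $\epsilon_N=2^{-N}$, the type $1$ error at $N=1\ge N_0(\epsilon_2)$ is $1/2>\epsilon_2$. State the corrected range, or enlarge $N_0(\epsilon)$ to $N_0(\epsilon)\vee\min\{N:\sup_{k\ge N}\epsilon_k\le\epsilon\}$.

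\emph{Upper envelope.} Drop the replacement of $\epsilon_N$ by $\sup_{k\ge N}\epsilon_k$. Under your normalisation a larger $\epsilon$ gives a \emph{smaller} $K$, not a larger one, and the replacement changes the tests. All you need is $\epsilon_N\le\epsilon_M$ for all large $N$, which holds because $\epsilon_N\to0$ and $\epsilon_M>0$ (if $\epsilon_M=0$, $K(\epsilon_M)$ is not even defined).

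\emph{Factor of two in $\gamma$.} Your last step, like the paper's, bounds $\gamma(\Psi_N,H_0,S,d,K(\epsilon_N)\delta_N)$. By \eqref{gamma}, its type $2$ part runs over $\{\theta\in S:d(\theta,H_0)\ge K(\epsilon_N)\delta_N\}$, which is larger than the set your bound controls. The claim can fail: if $\widehat\theta_N=\theta$ $P_\theta$-a.s., any $\theta\in S$ with $d(\theta,H_0)=t_N$ is accepted with probability one. Apply the bound with $H_{1,N}=\{\theta\in S:d(\theta,H_0)\ge 2K(\epsilon_N)\delta_N\}$ instead. This gives $\gamma(\Psi_N,H_0,S,d,2K(\epsilon_N)\delta_N)\to0$, i.e.\ distinguishability at the rate $(2K(\epsilon_N)\delta_N)_{N\in\N}$.

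With these changes your proof is a correct version of the paper's argument.
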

Note that \Cref{theorem_distinguishability_under_uniform_stochastic_boundedness} does not impose any conditions on the growth rate with respect to $\epsilon$ of $K(\epsilon)$; cf. \Cref{remark_growth_rate_K_wrt_epsilon}.
\begin{proof}[Proof of \Cref{theorem_distinguishability_under_uniform_stochastic_boundedness}]
Let $(\delta_N)_{N\in\N}$, $(\epsilon_N)_{N\in\N}$, and $(K(\epsilon_N))_{N\in\N}$ be as in the statement of the theorem.
Since $K(\epsilon_N)\delta_N$ is nonnegative for every $N\in\N$, we may apply \Cref{errors_plug_in_general} with the substitution $t_N\leftarrow K(\epsilon_N)\delta_N$ in \eqref{eq_null_hypothesis_alternative_hypothesis} and \eqref{eq_bound_on_errors}, to conclude that the type $1$ and type $2$ errors of the tests $(\Psi_N)_{N\in\N}$ for the testing problem \eqref{eq_null_hypothesis_alternative_hypothesis_specific} satisfy
\begin{equation*}
 \sup_{\theta\in H_0}P_\theta(\Psi_N=1)\bigvee \sup_{\theta\in H_{1,N}}P_\theta(\Psi_N=0)\leq \sup_{\theta\in S}P_\theta\left(d(\widehat{\theta}_N,\theta)\geq K(\epsilon_N)\delta_N\right).
\end{equation*}
Let $M\in\N$ be arbitrary. By the hypothesis of uniform stochastic boundedness \eqref{eq_uniform_stochastic_boundedness} and by the choice of sequence $(\epsilon_N)_{N\in\N}$, there exists some $N_0(\epsilon_M)\in\N$ such that
\begin{equation*}
 \sup_{\theta \in S} P_\theta\left( d(\widehat{\theta}_N,\theta)\geq K(\epsilon_N)\delta_N\right)\leq \epsilon_M,\qquad \forall N\geq N_0(\epsilon_M).
\end{equation*}
Combining the two math displays above yields \eqref{eq_type1_type2_errors_converge_to_zero}. Next, note that by the definition \eqref{gamma} of the combined type $1$ and type $2$ error function $\gamma$ and by \eqref{eq_type1_type2_errors_converge_to_zero}, we have $\gamma(\Psi_N,H_0,S,d,K(\epsilon_N)\delta_N)\leq 2\epsilon_M$, for every $N\geq N_0(\epsilon_M)$. 
Letting $M\to\infty$ and using the hypothesis that $(\epsilon_N)_{N\in\N}$ converges to zero completes the proof.
\end{proof}

The following result shows that for \Cref{theorem_distinguishability_under_uniform_stochastic_boundedness}, one can ensure that $(K(\epsilon_N))_{N\in\N}$ is constant for any choice of $(\epsilon_N)_{N\in\N}$, provided that one uses an alternative rate $(\delta_N^\prime)_{N\in\N}$ that is slower than the rate $(\delta_N)_{N\in\N}$.

\begin{corollary}[Distinguishability at slower rates]
	\label{corollary_distinguishability_at_slower_rates}
	Let $S\subseteq \Theta$ be nonempty, let $(\widehat{\theta}_N)_{N\in\N}$ be a sequence of estimators, and let $(\delta_N)_{N\in\N}$ be a sequence of nonnegative numbers, such that $(d(\widehat{\theta}_N,\theta))_{N\in\N}$ is uniformly stochastically bounded with respect to $(\delta_N)_{N\in\N}$ over $\theta\in S$.
	If $(\delta_N^\prime)_{N\in\N}$ satisfies
	\begin{equation}
		\label{eq:cond_rate}
		\frac{\delta_N^\prime}{\delta_N} \to \infty \quad \text{as}~N \to \infty,
	\end{equation}
	then the combined type $1$ and type $2$ error of the tests $(\Psi_N)_{N\in\N}$ from \eqref{eq_infimum_plug_in_tests} with $t_N \leftarrow \delta_N^\prime$ for testing
	\begin{equation*}   
	  H_0 \subseteq S \quad \text{vs.} \quad 
	H_{1,N}^\prime \subseteq \left\{\theta \in S: d(\theta,H_0) \ge 2\delta_N^\prime\right\}
	\end{equation*}
	satisfies $\lim_{N\to\infty}\gamma(\Psi_N,H_0,S,D,\delta_N^\prime)= 0$. In particular, the tests $(\Psi_N)_{N\in\N}$ distinguish between $H_0$ and $(H_{1,N})_{N\in\N}$ at the rate $(\delta_N^\prime)_{N\in\N}$.
\end{corollary}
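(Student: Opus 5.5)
The plan is to reduce everything to \Cref{errors_plug_in_general} with $t_N \leftarrow \delta_N^\prime$, and then use \eqref{eq:cond_rate} to absorb any fixed constant $K(\epsilon)$ from \eqref{eq_uniform_stochastic_boundedness}. Since $\delta_N^\prime/\delta_N\to\infty$, the $\delta_N^\prime$ are eventually positive and in particular nonnegative. For those $N$, \Cref{errors_plug_in_general} gives
\[
\sup_{\theta\in H_0}P_\theta(\Psi_N=1)\bigvee\sup_{\theta\in H_{1,N}^\prime}P_\theta(\Psi_N=0)\le \sup_{\theta\in S}P_\theta\bigl(d(\widehat{\theta}_N,\theta)\ge \delta_N^\prime\bigr).
\]
It therefore suffices to show that the right-hand side tends to zero.

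Fix $\epsilon>0$ and take $K=K(\epsilon)$ and $N_0=N_0(\epsilon)$ from \Cref{definition_uniform_stochastic_boundedness}. By \eqref{eq:cond_rate}, there is $N_1\ge N_0$ such that $\delta_N^\prime\ge K\delta_N$ for all $N\ge N_1$. For such $N$, the event $\{d(\widehat{\theta}_N,\theta)\ge\delta_N^\prime\}$ is contained in $\{d(\widehat{\theta}_N,\theta)\ge K\delta_N\}$. Hence the supremum over $S$ is at most $\epsilon$. Since $\epsilon>0$ was arbitrary, the right-hand side converges to zero.

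By the definition \eqref{gamma}, $\gamma(\Psi_N,H_0,S,d,\delta_N^\prime)$ is at most twice this bound, because the alternative in $\gamma$ is taken over $\{\theta \in S : d(\theta,H_0)\ge 2\delta_N^\prime\}$. The statement's $D$ is read as the metric $d$, and the separation $2\delta_N^\prime$ is read consistently with \Cref{theorem_distinguishability_under_uniform_stochastic_boundedness}. It follows that $\gamma\to0$, and so the tests distinguish at the rate $(\delta_N^\prime)$ in the sense of \Cref{definition_tests_that_distinguish}; the factor $2$ does not affect the rate.

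No step is a real obstacle. The only points needing care are the early indices where $\delta_N$ might vanish, which the eventual positivity of $\delta_N^\prime$ handles, and keeping track of the factor $2$ in the separation.
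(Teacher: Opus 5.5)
Your proposal is correct and follows essentially the paper's argument: the paper also absorbs $K(\epsilon)$ by choosing $N$ large enough that $\delta_N'\ge K(\epsilon)\delta_N$. The only difference is packaging: the paper records this as uniform stochastic boundedness with respect to $(\delta_N')$ with $K\equiv 1$ and then invokes \Cref{theorem_distinguishability_under_uniform_stochastic_boundedness}, whereas you go directly through \Cref{errors_plug_in_general}. You read $\gamma(\Psi_N,H_0,S,D,\delta_N')$ as the combined error against the $2\delta_N'$-separated alternative $H_{1,N}'$, which is the same reading the paper uses; that reading is necessary, because with threshold $t_N=\delta_N'$ the type~$2$ error at alternatives separated by only $\delta_N'$ need not vanish.
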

\begin{remark}
\label{remark_limit_case}
 \Cref{corollary_distinguishability_at_slower_rates} does not include the limiting case of alternatives $(H_{1,N})_{N\in\N}$ that are separated from the null hypothesis $H_0$ at precisely the rate $(\delta_N)_{N\in\N}$.  This is because of the constraint \eqref{eq:cond_rate} on the sequence $(\delta_N^\prime)_{N\in\N}$.
\end{remark}

\begin{proof}[Proof of \Cref{corollary_distinguishability_at_slower_rates}]
Let $\epsilon>0$ be arbitrary. By the hypothesis of uniform stochastic boundedness, there exist $K(\epsilon)>0$ and $N_0(\epsilon)\in\N$ such that $\sup_{\theta\in S}P_\theta(d(\widehat{\theta}_N,\theta)\geq K(\epsilon)\delta_N)\leq \epsilon$ for every $N\geq N_0(\epsilon)$.
By the hypothesis \eqref{eq:cond_rate}, there exists $N_1(K(\epsilon))\in\N$ such that $\delta_N^\prime \geq K(\epsilon) \delta_N$ for every $N\geq N_1(K(\epsilon))$. 
Now
\begin{equation*}
 \sup_{\theta \in S}P_\theta\left(d(\widehat{\theta}_N,\theta)\geq \delta_N^\prime\right)\leq \sup_{\theta\in S}P_\theta\left(d(\widehat{\theta}_N,\theta)\geq K(\epsilon)\delta_N\right) \leq  \epsilon,\quad \forall N\geq N_0(\epsilon)\vee N_1(K(\epsilon)).
\end{equation*}
Hence, $(d(\widehat{\theta}_N,\theta))_{N\in\N}$ satisfies the condition \eqref{eq_uniform_stochastic_boundedness} for being uniformly stochastically bounded with respect to the slower rate $(\delta_N^\prime)_{N\in\N}$, with $N_0(\epsilon)\leftarrow N_0(\epsilon)\vee N_1(K(\epsilon))$ and constant $K(\epsilon)\leftarrow 1$ for every $\epsilon$ in \eqref{eq_uniform_stochastic_boundedness}.
Thus, the hypotheses of \Cref{theorem_distinguishability_under_uniform_stochastic_boundedness} are satisfied.
By replacing $K(\epsilon_N)$ and $\delta_N$ in the statement of  \Cref{theorem_distinguishability_under_uniform_stochastic_boundedness} with $1$ and $\delta_N^\prime$ respectively, we obtain $\lim_{N\to\infty}\gamma(\Psi_N,H_0,S,D,\delta_N^\prime)= 0$.
\end{proof}
Recall the criterion \eqref{eq_strong_uniform_stochastic_boundedness} for strong uniform stochastic boundedness from \Cref{definition_uniform_stochastic_boundedness}. If we strengthen the hypothesis of uniform stochastic boundness in \Cref{theorem_distinguishability_under_uniform_stochastic_boundedness} to strong uniform stochastic boundedness, then we obtain the following result.
\begin{corollary}[Distinguishability under strong uniform stochastic boundedness]
 \label{corollary_distinguishability_under_strong_uniform_stochastic_boundedness}
 Let $S\subseteq \Theta$ be nonempty. Let $(\widehat{\theta}_N)_{N\in\N}$ be a sequence of estimators and $(\delta_N)_{N\in\N}$ be a sequence of nonnegative numbers, such that $(d(\widehat{\theta}_N,\theta))_{N\in\N}$ is strongly uniformly stochastically bounded with respect to $(\delta_N)_{N\in\N}$ over $\theta\in S$, with probability decay rate $(\epsilon_N)_{N\in\N}$ and scalars $K>0$ and $N_0\in\N$ that do not depend on $(\delta_N)_{N\in\N}$ or $(\epsilon_N)_{N\in\N}$ as in \eqref{eq_strong_uniform_stochastic_boundedness}. Then the type $1$ and type $2$ errors of the tests $(\Psi_N)_{N\in\N}$ from \eqref{eq_infimum_plug_in_tests} with $t_N\leftarrow K\delta_N$, $N\in\N$, for testing
	\begin{equation}   
	\label{eq_null_hypothesis_alternative_hypothesis_specific_strong_uniform_stochastic_boundedness}
	 H_0 \subseteq S \quad \text{vs.} \quad 
	H_{1,N} \subseteq \left\{\theta \in S: d(\theta,H_0) \ge 2K\delta_N\right\} 
	\end{equation}
	satisfy
\begin{equation}
\label{eq_type1_type2_errors_bounds_strong_uniform_stochastic_boundedness}
  \sup_{\theta \in H_0} P_\theta(\Psi_N = 1) \bigvee \sup_{\theta \in H_{1,N}} P_\theta(\Psi_N = 0)\leq \epsilon_N,\qquad \forall N\geq N_0.
\end{equation}
In particular, $\gamma (\Psi_N,H_0,S,d,K\delta_N)\leq 2\epsilon_N$ for every $N\geq N_0$, and the tests $(\Psi_N)_{N\in\N}$ distinguish between $H_0$ and $(H_{1,N})_{N\in\N}$ at the rate $(K\delta_N)_{N\in\N}$.
\end{corollary}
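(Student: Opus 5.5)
The plan is to apply \Cref{errors_plug_in_general} directly with the threshold $t_N \leftarrow K\delta_N$. This is the same route as the proof of \Cref{theorem_distinguishability_under_uniform_stochastic_boundedness}, but without choosing a sequence $K(\epsilon_N)$, since strong uniform stochastic boundedness already supplies a single constant $K$. Since $K>0$ and $\delta_N\geq 0$, each $t_N = K\delta_N$ is nonnegative. The alternative in \eqref{eq_null_hypothesis_alternative_hypothesis_specific_strong_uniform_stochastic_boundedness} is contained in $\{\theta\in S: d(\theta,H_0)\geq 2t_N\}$, so the hypotheses of \Cref{errors_plug_in_general} hold for every $N\in\N$. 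The lemma then gives, for all $N\in\N$,
\begin{equation*}
 \sup_{\theta \in H_0} P_\theta(\Psi_N = 1) \bigvee \sup_{\theta \in H_{1,N}} P_\theta(\Psi_N = 0)\leq \sup_{\theta\in S}P_\theta\left(d(\widehat{\theta}_N,\theta)\geq K\delta_N\right).
\end{equation*}

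Next I will combine this with \eqref{eq_strong_uniform_stochastic_boundedness}. For all $N\geq N_0$ it bounds the right-hand side by $\epsilon_N$, which yields \eqref{eq_type1_type2_errors_bounds_strong_uniform_stochastic_boundedness}. Summing the type $1$ and type $2$ bounds in the definition \eqref{gamma} of $\gamma$ then gives $\gamma(\Psi_N,H_0,S,d,K\delta_N)\leq 2\epsilon_N$ for $N\geq N_0$.

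The distinguishability claim in the sense of \Cref{definition_tests_that_distinguish} requires $2\epsilon_N\to 0$. This is the only point needing care. I will read ``probability decay rate'' in \Cref{definition_uniform_stochastic_boundedness} as a sequence $(\epsilon_N)_{N\in\N}$ of nonnegative numbers converging to zero; under that reading the conclusion is immediate. Apart from this interpretive point there is no real obstacle: the argument is a one-line application of the lemma, and the uniformity of $K$ in $N$ is exactly what removes the need for the diagonal argument over $M$ used in the proof of \Cref{theorem_distinguishability_under_uniform_stochastic_boundedness}.
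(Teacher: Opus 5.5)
Your derivation of \eqref{eq_type1_type2_errors_bounds_strong_uniform_stochastic_boundedness} is correct. It is the paper's argument with one detour removed. The paper invokes \Cref{theorem_distinguishability_under_uniform_stochastic_boundedness} with $K(\epsilon_N),N_0(\epsilon_N)$ frozen to $K,N_0$, and that theorem is just \Cref{errors_plug_in_general} plus the concentration bound. Applying the lemma directly is cleaner, because it gives the bound $\epsilon_N$ at each $N\geq N_0$ without relabelling $\epsilon_M$ as $\epsilon_N$ in the theorem's conclusion. Your reading of ``probability decay rate'' as a null sequence is also what the paper tacitly needs when it asserts that strong uniform stochastic boundedness implies uniform stochastic boundedness.

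One step does not follow: ``summing the type $1$ and type $2$ bounds \dots\ gives $\gamma(\Psi_N,H_0,S,d,K\delta_N)\leq 2\epsilon_N$.'' By \eqref{alternative_hypothesis} and \eqref{gamma}, this $\gamma$ takes the type $2$ supremum over all $\theta\in S$ with $d(\theta,H_0)\geq K\delta_N$. \Cref{errors_plug_in_general}, however, only controls $\theta$ with $d(\theta,H_0)\geq 2t_N=2K\delta_N$.

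The claim is false as written. Take $S=[0,1]$, $d(x,y)=|x-y|$, $H_0=\{0\}$, a noiseless experiment in which the data equal $\theta$ almost surely, $\widehat{\theta}_N$ equal to the data, $K=1$, and $\delta_N=1/N$. Then $d(\widehat{\theta}_N,\theta)=0$ almost surely, so strong uniform stochastic boundedness holds with $\epsilon_N=1/N$ and $N_0=1$. But at $\theta=1/N$ we get $T_N=1/N=t_N$, so $\Psi_N=0$ almost surely. Hence $\gamma(\Psi_N,H_0,S,d,K\delta_N)\geq 1$ for every $N$.

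What your argument (and the paper's) actually proves is $\gamma(\Psi_N,H_0,S,d,2K\delta_N)\leq 2\epsilon_N$ for $N\geq N_0$. That is distinguishability in the sense of \Cref{definition_tests_that_distinguish} at the rate $(2K\delta_N)_{N\in\N}$. The order is the same, so nothing substantive is lost, but the final sentence should be stated with $2K\delta_N$. The paper's proof has the same slip, as does the last claim of \Cref{theorem_distinguishability_under_uniform_stochastic_boundedness}. So this is a defect of the statement, not of your strategy.
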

\begin{proof}[Proof of \Cref{corollary_distinguishability_under_strong_uniform_stochastic_boundedness}]
By \Cref{definition_uniform_stochastic_boundedness}, strong uniform stochastic boundedness implies uniform stochastic boundedness.
Thus, we may apply \Cref{theorem_distinguishability_under_uniform_stochastic_boundedness} with the sequences $(K(\epsilon_N))_{N\in\N}$ and $(N_0(\epsilon_N))_{N\in\N}$ in the statement of \Cref{theorem_distinguishability_under_uniform_stochastic_boundedness} replaced with the constants $K$ and $N_0$ given in the hypothesis of strong uniform stochastic boundedness. Given this replacement, the condition \eqref{eq_null_hypothesis_alternative_hypothesis_specific} in the statement of \Cref{theorem_distinguishability_under_uniform_stochastic_boundedness} on the alternative hypothesis $H_{1,N}$ yields the analogous condition \eqref{eq_null_hypothesis_alternative_hypothesis_specific_strong_uniform_stochastic_boundedness} in the statement of \Cref{corollary_distinguishability_under_strong_uniform_stochastic_boundedness}.
The type $1$ and type $2$ error bounds \eqref{eq_type1_type2_errors_converge_to_zero} in the statement of \Cref{theorem_distinguishability_under_uniform_stochastic_boundedness} yield the analogous bounds \eqref{eq_type1_type2_errors_bounds_strong_uniform_stochastic_boundedness} in the statement of \Cref{corollary_distinguishability_under_strong_uniform_stochastic_boundedness}, after replacing $\epsilon_M$ and the conditions $N\geq N_0(\epsilon_M)$, $M\in\N$ with $\epsilon_N$ and $N\geq N_0$, $N\in\N$ respectively.
The final statement of \Cref{corollary_distinguishability_under_strong_uniform_stochastic_boundedness} follows from \eqref{eq_type1_type2_errors_bounds_strong_uniform_stochastic_boundedness} and the definition \eqref{gamma} of the combined type $1$ and type $2$ error $\gamma$, and from \Cref{definition_tests_that_distinguish} of tests that distinguish between a null hypothesis and a family of alternatives.
\end{proof}
In \Cref{sec:nonlin_ip}, we shall use \Cref{corollary_distinguishability_under_strong_uniform_stochastic_boundedness} to prove distinguishability for posterior means in \Cref{corollary_L2_distinguishability_posterior_mean} and \Cref{corollary_Linfty_distinguishability_posterior_mean}, and also for MAP estimators in \Cref{corollary_distinguishability_MAP}.

\subsection{Concentration inequalities in literature}
\label{subsection_concentration_inequalities_in_literature}

Concentration inequalities are known for several estimators in a nonlinear inverse problems setting with random observations.

In \cite[Theorem 2.3.2]{Nic:2023}, stochastic boundedness of $(d(\widehat{\theta}_N,\theta))_{N\in\N}$ for the sequence of posterior means $\widehat{\theta}_N = \Exp^{\Pi_N}[\theta|(Y_k,X_k)_{k=1}^N]$ is shown, with $d$ being the metric induced by the $L^2_{\mu_X}$ and with the rate 
\begin{equation*}
	\delta_N = N^{-\frac{\alpha+\kappa}{2(\alpha+\kappa) + d}},\quad N\in\N
\end{equation*}
where $\alpha>0$ denotes some smoothness parameter and $d$ denotes the dimension of a common domain for a set of functions, for the special case where the set $S$ is a singleton set. An examination of the proof shows that the constant $K$ in the concentration inequality depends on $\theta$. In \Cref{sec:nonlin_ip}, we will prove that in fact strong uniform stochastic boundedness holds, for a suitably bounded set $S$ of admissible parameters.

In \cite{AbhHelMue:2023}, the penalised least squares estimators $(\widehat{\theta}_N)_{N\in\N}$ with squared norm penalty are studied, for general nonlinear inverse problems with certain properties.
Corollary 4.2 in \cite{AbhHelMue:2023} asserts stochastic boundedness of $(d(\widehat{\theta}_N,\theta))_{N\in\N}$ with $d$ being the metric induced by the norm of some separable Hilbert space $\calH$ of mappings from $\X$ to $V$ that is continuously embedded in $L_{\lambda}^2(\X,V)$, and with the rate 
\[ \delta_N = N^{-\frac{\alpha}{2\alpha + b + 1}},\quad N\in\N \]
where $\alpha>0$ again denotes some smoothness parameter, and $0<b < 1$ as given in \cite[Assumption 5]{AbhHelMue:2023} describes the degree of ill-posedness with respect to the design measure via the decay of the eigenvalues of a covariance operator. 
A concentration inequality is stated for $K \propto \ln \frac{4}{\epsilon}$, but it is unclear if $K$ depends on $\theta$, and if $K$ remains bounded under the source condition \cite[Assumption 11]{AbhHelMue:2023}.

Corollary 2.10 in \cite{Siebel:2024} establishes stochastic boundedness of $(d(\widehat{\theta}_N,\theta))_{N\in\N}$ for penalised least squares estimators $(\widehat{\theta}_N)_{N\in\N}$ with respect to
\[ \delta_N = N^{-\frac{\alpha + \kappa}{2(\alpha + \kappa) + d_z}},\quad N\in\N \]
under $P_{\theta}$, for the metric induced by the standard $L^2$ norm and for the squared $H^\alpha$ norm penalty, where $\alpha>0$ denotes some smoothness parameter, $\kappa$ describes the regularity of the forward mapping according to \cite[Condition 2.3]{Siebel:2024}, and $d_z$ denotes the dimension of a common domain for a family of functions.
The constant $K$ in the concentration inequality explicitly depends on $\norm{\theta}_{H^\alpha}$, but can be chosen identically for all $\theta \in S$ if $S$ is bounded in $H^\alpha$.
In \cref{sec:map_estimator}, we will derive upper bounds for the separation rate from this result.

\section{Nonlinear inverse problems with random observations}
\label{sec:nonlin_ip}

In this section, we consider goodness-of-fit testing for nonlinear inverse problems with random observations, using the same setting as in \cite[Section 1.2]{Nic:2023}. We now describe this setting.
Let $(\calZ,\zeta)$ and $(\X,\lambda)$ be probability spaces, where $\calZ \subseteq \R^{d_z}$ and $\X \subseteq \R^{d_x}$. Furthermore, let $V$ and $W$ be normed spaces of finite dimension $d_V$ and $d_W$.
We consider parameters $\theta$ from a Borel-measurable subspace $\Theta \subseteq L^2_\zeta(\calZ,W)$.
We assume that we are given indirect noisy observations
\begin{equation}
	Y_i = [G(\theta_0)](X_i) + \epsilon_i, \quad i \in \{1, \dots, N\}, \label{eq:inverse_problem}
\end{equation}
at random points $X_i$ of a function $G(\theta_0)$ generated by a true parameter $\theta_0 \in \Theta$, where $X_i \sim \lambda$ are i.i.d. $\X$-valued random variables, $\epsilon_i \sim \Normal(0,\sigma^2\Id_V)$ for some $\sigma>0$ are i.i.d. $V$-valued random variables, and $G$ is a measurable nonlinear mapping from $\Theta$ to $L^2_\lambda(\X,V)$.
In addition, we shall make use of a normed space $(\calR,\norm{\cdot}_{\calR})$, where $\calR$ is a subspace of $\Theta$; we shall refer to a $\calR$ as the `regularisation space'. We will specify $\calR$ further in the results below.
Note that although the covariance operator of the noise $\epsilon_i$ is assumed to be the identity on $V$, nonisotropy can be modelled by a suitable choice of the norm on $V$.

We first state our key assumptions regarding the forward model.
The first assumption expresses the boundedness and Lipschitz continuity of the forward model $G$ when it is restricted to bounded subsets of the regularisation space $\calR$. 
\begin{assumption}[{\cite[Condition 2.1.1]{Nic:2023}}]
 \label{assumption_Nickl_Condition_2_1_1}
 Let $(\X,\lambda)$, $(\calZ,\zeta)$, $V$, $W$, $\Theta$, $G$, and $\calR$ be as described above.
There exists $\kappa\geq 0$ such that for every $M>0$ there exist finite scalars $U=U(M)\geq 1$ and $L=L(M)>0$, such that 
 \begin{align}
  \label{eq_Nickl_2_3}
  \sup_{\theta\,\in\,\Theta\,\cap\,B_{\calR}(M)}\norm{G(\theta)}_\infty\leq &U
\\
 \label{eq_Nickl_2_4}
 \norm{G(\theta_1)-G(\theta_2)}_{L^2_\lambda(\X,V)}\leq& L\norm{\theta_1-\theta_2}_{(H^\kappa(\calZ))^\ast},\quad \forall\theta_1,\theta_2\in \Theta\cap B_{\calR}(M).
\end{align}
\end{assumption}

The index $\kappa$ in \eqref{eq_Nickl_2_4} describes how smoothing the operator $G$ is and thus quantifies the ill-posedness of the inverse problem defined by $G$.
Note that $H^\kappa$ refers to the standard Sobolev space based upon the Lebesgue measure, so that if $\kappa = 0$, then the norm of the space $(H^0(\calZ))^* = L^2(\calZ)$ on the right hand side of \eqref{eq_Nickl_2_4} is the \emph{standard} i.e. Lebesgue $L^2$ norm; cf. \Cref{subsection_notation}.
Note that in comparison with \cite{Siebel:2024}, where similar but global assumptions on the forward mapping $G$ are used to prove convergence of MAP estimators (cf. \cref{assumptions_G_global}), \cref{assumption_Nickl_Condition_2_1_1} only requires local boundedness and local Lipschitz continuity.

Next, we consider a so-called stability estimate condition on the forward model.
\begin{assumption}[Stability estimate]
\label{assumption_local_Lipschitz_continuity_of_restricted_inverse_forward_map}
	Let $(\X,\lambda)$, $(\calZ,\zeta)$, $V$, $W$, $G$, and $\calR$ be as in \Cref{assumption_Nickl_Condition_2_1_1}.
	There exists some $\eta > 0$ such that for every $M > 0$, there exist constants $L^\prime, \delta_0 > 0$ such that
	\begin{equation}
		\label{eq_stability_estimate}
		\sup \left\{\norm{\theta_1 - \theta_2}_{L_\zeta^2(\calZ,W)}: \theta_1, \theta_2 \in B_\calR(M), \norm{G(\theta_1) - G(\theta_2)}_{L_\lambda^2(\X,V)} \le \delta\right\} \le L^\prime\delta^\eta
	\end{equation}
	holds for all $\delta \in (0,\delta_0]$.
\end{assumption}
An example of \Cref{assumption_local_Lipschitz_continuity_of_restricted_inverse_forward_map} with $\eta=1$ appears in \cite[Proposition 3.3]{Lie:2024}. 
\Cref{assumption_local_Lipschitz_continuity_of_restricted_inverse_forward_map} implies \cite[Condition 2.1.4]{Nic:2023}. 
An important distinction between \Cref{assumption_local_Lipschitz_continuity_of_restricted_inverse_forward_map} and \cite[Condition 2.1.4]{Nic:2023} is that \cite[Condition 2.1.4]{Nic:2023} depends on a given $\theta_0$, whereas \Cref{assumption_local_Lipschitz_continuity_of_restricted_inverse_forward_map} does not. 
We shall exploit this distinction below, in \Cref{theorem_uniform_posterior_contraction_L2}.
Note that the stability condition \eqref{eq_stability_estimate} is formulated using the $L^2_\zeta$ norm in the parameter space and not the standard $L^2$ norm.

\subsection{Uniform convergence of posterior mean under posterior contraction}
\label{sec:uniform_conv_post_mean}

As we have seen in \cref{sec:dist_plug_in}, uniform convergence of an estimator implies distinguishability. 
In this section, we will establish uniform convergence for the posterior mean as a consequence of posterior contraction.
To this end, we state and prove \Cref{theorem_uniform_posterior_contraction_implies_uniform_convergence_posterior_mean}, which shows that uniform posterior contraction and a continuous embedding of the regularisation space $\calR$ into a separable Banach space $\mathbb{B}$ implies uniform convergence of the posterior mean in the $\mathbb{B}$ norm.

We denote by $P^N_\theta=\bigotimes^N_{i=1} P_\theta$ the joint law of $(Y_i,X_i)_{i=1}^{N}$, where $P_\theta$ denotes the law of a generic pair $(Y_i,X_i)$ that satisfies \eqref{eq:inverse_problem} with $\theta_0\leftarrow\theta$, for $\theta\in\Theta$.
Let $\mu$ denote the product of the Lebesgue measure on $V$ with $\lambda$. Then $P_\theta\ll \mu$, and
\begin{equation}
\label{eq_Nickl_1_10}
 p_\theta(y,x)\coloneqq \frac{\di P_\theta}{\di \mu}(y,x)=\frac{1}{(2\pi\sigma^2)^{{d_V}/2}}\exp\left(-\frac{1}{2\sigma^2}\norm{y-G(\theta)(x)}_V^2\right);
\end{equation}
cf. \cite[equation (1.10)]{Nic:2023}.
This yields the definition \cite[equation (1.13)]{Nic:2023} of the log-likelihood function of the data $D_N=(Y_i,X_i)_{i=1}^{N}$ under the additive independent Gaussian observation noise model \eqref{eq:inverse_problem}:
\begin{equation}
 \label{eq_Nickl_1_13}
 \Theta\ni\theta\mapsto\ell_N(\theta)\coloneqq -\frac{1}{2\sigma^2}\sum_{i=1}^{N}\norm{Y_i-G(\theta)(X_i)}^2_V.
\end{equation}
For a prior $\Pi$ on $\Theta$, the corresponding posterior given the data $D_N$ is a probability measure on $\Theta$ that satisfies
\begin{equation}
 \label{eq_Nickl_1_12}
 \mathrm{d}\Pi(\theta\vert D_N)=\frac{e^{\ell_N}(\theta)\mathrm{d}\Pi(\theta)}{\int_{\Theta} e^{\ell_N(\theta')}\mathrm{d}\Pi(\theta')},\quad \theta\in\Theta.
\end{equation}

We shall use centred Gaussian priors on $\theta$ that satisfy the following conditions.

\begin{assumption}[{\cite[Condition 2.2.1]{Nic:2023}}]
 \label{assumption_Nickl_Condition_2_2_1}
 Let $(\X,\lambda)$, $(\calZ,\zeta)$ and $G$ be as in \Cref{assumption_Nickl_Condition_2_1_1}.
 Let $\Pi^\prime $ be a centred Gaussian Borel probability measure on the linear space $\Theta\subseteq L^2_\zeta(\calZ,W)$ with reproducing kernel Hilbert space (RKHS) $\calH$. Suppose further that $\Pi^\prime (\calR)=1$ for some separable normed linear subspace $(\calR,\norm{\cdot}_{\calR})$ of $\Theta$.
\end{assumption}
Recall that if $\mathbb{B}$ is a separable Banach space and $X$ is a centred $\mathbb{B}$-valued Gaussian random variable, then the RKHS $\calH$ of $X$ is a dense measurable subspace of $\mathbb{B}$ that is continuously embedded in $\mathbb{B}$; see e.g. \cite[Proposition 2.6.9]{GinNic:2016} or the discussion after \cite[equation (I.3)]{GhoVaa:2017}.
Therefore, \cref{assumption_Nickl_Condition_2_2_1} implies that if $\calR$ is a separable Banach space, then $\calH \subseteq \calR$.
The following definition of a family of rescaled Gaussian priors is the same as that given by \cite[equation (2.18)]{Nic:2023}.
\begin{definition}
\label{definition_rescaled_Gaussian_priors}
Let $\calZ$ be a bounded, smooth domain in $\mathbb{R}^{d_z}$, $\kappa$ be as given in \Cref{assumption_Nickl_Condition_2_1_1}, and let $\Pi^\prime $ satisfy \Cref{assumption_Nickl_Condition_2_2_1}. For a chosen $\alpha>0$, define $\Pi_N$ for every $N\in\mathbb{N}$ to be the law of $\theta\coloneqq N^{-d_z/(4\alpha+4\kappa+2d_z)}\theta'$, where $\theta'\sim\Pi^\prime $.
\end{definition}
Given $\alpha$ from \Cref{definition_rescaled_Gaussian_priors} and $\kappa$ from \Cref{assumption_Nickl_Condition_2_1_1}, we shall use the following sequence from \cite[equation (2.19)]{Nic:2023} to determine our posterior contraction rate:
\begin{equation}
 \label{eq_Nickl_2_19}
 \delta_N\coloneqq N^{-\frac{\alpha+\kappa}{2\alpha+2\kappa+d_z}}.
\end{equation}
For every $M>0$, for $\kappa$ and $\calR$ from \Cref{assumption_Nickl_Condition_2_1_1} and \Cref{assumption_Nickl_Condition_2_2_1}, and for an arbitrary sequence $(\delta_N)_{N\in\N}$ converging to zero, we shall use the \emph{regularisation sets} from \cite[equation (2.20)]{Nic:2023}:
\begin{equation}
 \label{eq_Nickl_2_20}
 \Theta_N=\Theta_N(M)\coloneqq \{\theta\in\calR:\theta=\theta_1+\theta_2,\norm{\theta_1}_{(H^\kappa)^\ast}\leq M\delta_N,\ \norm{\theta_2}_{\calH}\leq M,\ \norm{\theta}_{\calR}\leq M\}.
\end{equation}
These sets can also be expressed using a Minkowski sum as
\[ \Theta_N = \left(B_{(H^\kappa)^*}(M\delta_N) + B_\calH(M)\right) \cap B_\calR(M). \]

For a subset $\mathcal{M}'\subseteq \mathcal{M}$ of a semimetric space $(\mathcal{M},d)$ and $\epsilon>0$, we denote the corresponding covering number by $N(\mathcal{M}',d,\epsilon)$.
The condition \eqref{eq_Nickl_1_26} in \Cref{theorem_Nickl_2_2_2} below is expressed in terms of the semimetric $d_G$, defined by 
\begin{equation}
 \label{semimetric_dG}
 (\theta_1,\theta_2)\mapsto d_G(\theta_1,\theta_2)\coloneqq \norm{G(\theta_1)-G(\theta_2)}_{L^2_\lambda(\X,V)}.
\end{equation}
For $\delta_N$ in \eqref{eq_Nickl_2_19}, $\theta_0\in\Theta$ and $\mathcal{U}>0$, define
 \begin{equation}
 \label{eq_Nickl_1_23}
\mathcal{B}_N(\theta_0,\mathcal{U})\coloneqq \{\theta\in\Theta:d_G(\theta,\theta_0)\leq \delta_N,\norm{G(\theta)}_\infty\leq \mathcal{U}\},
 \end{equation}
and observe that $\mathcal{B}_N(\theta_0,\mathcal{U})$ is increasing in the second argument, i.e. if $\mathcal{U}_1\leq \mathcal{U}_2$ then for every $\theta_0$, $\mathcal{B}_N(\theta_0,\mathcal{U}_1)\subseteq \mathcal{B}_N(\theta_0,\mathcal{U}_2)$. 
We also define the following function.
\begin{align}
 \label{eq_C_of_U_function}
 (0,\infty)\ni u\mapsto C_u\coloneqq &\frac{1-e^{-u^2/2}}{2u^2}
\end{align}
We now state a forward posterior contraction result that will form the basis for all the subsequent posterior contraction results.
Below, $\Pi_N(\cdot\vert D_N)$ denotes the posterior corresponding to \eqref{eq_Nickl_1_12} with the substitution $\Pi\leftarrow\Pi_N$ for $\Pi_N$ in \cref{definition_rescaled_Gaussian_priors}.
\begin{restatable}{theorem}{forwardPosteriorContraction}
 \label{theorem_Nickl_2_2_2}
 Suppose \Cref{assumption_Nickl_Condition_2_1_1} holds for the forward map $G$, $\kappa\geq 0$, $M'\mapsto U(M')<\infty$, regularisation space $\calR$, and bounded smooth domain $\calZ\subset \mathbb{R}^{d_z}$.
Let the base prior $\Pi^\prime $ satisfy \Cref{assumption_Nickl_Condition_2_2_1} for the given $\calR$, and let $\alpha>0$ be such that the RKHS $\calH$ of $\Pi^\prime $ satisfies the continuous embedding
\begin{equation}
 \calH\hookrightarrow H^\alpha_\cpt(\calZ)\text{ if }\kappa\geq 1/2,\qquad\text{or}\qquad \calH\hookrightarrow H^\alpha (\calZ)\text{ if }\kappa<1/2. \label{eq:RKHS_Sob_emb}
\end{equation}
Let $(\Pi_N)_{N\in\mathbb{N}}$ and $(\delta_N)_{N\in\mathbb{N}}$ be as in \Cref{definition_rescaled_Gaussian_priors} and \eqref{eq_Nickl_2_19} respectively.
Let $\theta_0\in\calH\cap\calR$. 
Then there exists $0<M<\infty$ sufficiently large, such that $\theta_0\in B_{\calR}(M)$, and there exists $A=A(U)$ for $U=U(M)$ defined by \eqref{eq_Nickl_2_3} and $M$, such that for all sufficiently large $N$,
\begin{equation}
 \label{eq_Nickl_1_24}
 \Pi_N(\mathcal{B}_N(\theta_0,U))\geq e^{-AN\delta_N^2}.
\end{equation}
Furthermore, by increasing $M$ further if necessary, there exists some $B>A+2$ such that the sets $(\Theta_N(M))_{N\in\mathbb{N}}$ defined in \eqref{eq_Nickl_2_20} satisfy 
\begin{subequations}
\begin{align}
 \label{eq_Nickl_1_25}
 \Pi_N(\Theta^\complement_N(M)) &\leq e^{-BN\delta_N^2}
 \\
 \label{eq_Nickl_1_26}
 \log N(\Theta_N(M),d_G,\overline{m}\delta_N) &\leq N\delta_N^2
\end{align}
\end{subequations}
for all sufficiently large $N\in\N$, where $\overline{m}=\overline{m}(M,L)>0$ in \eqref{eq_Nickl_1_26} is large enough and for $L=L(M)$ defined in condition \eqref{eq_Nickl_2_4} of \Cref{assumption_Nickl_Condition_2_1_1}. 
In particular, for any $0<b<B-(A+2)$, and for any $0<c<1$, we can choose $m=m(A,b,\overline{m})$ sufficiently large and independent of $c$, such that for $U$ as in \eqref{eq_Nickl_1_24}, for $C_U$ given by \eqref{eq_C_of_U_function} with $u\leftarrow U$, for some $N^*(c)\in\N$, and for every $N\geq N^*(c)$,
\begin{equation}
\label{eq_Nickl_1_27}
 P^N_{\theta_0}\left(\Pi_N(\{\theta\in\Theta_N: d_G(\theta,\theta_0)\leq mC_{U}^{-1/2}\delta_N \}\vert D_N)\leq 1-e^{-bN\delta_N^2}\right)\leq (1+c)\frac{2(U^2+1)}{N\delta_N^2}.
\end{equation}
\end{restatable}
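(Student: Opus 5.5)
This theorem is essentially a restatement, with constants made explicit, of \cite[Theorem 2.2.2]{Nic:2023} together with the general contraction result \cite[Theorem 1.3.2]{Nic:2023}. The plan is therefore to verify the three standard conditions (small-ball, sieve, entropy) with explicit constants, and then to run the usual testing-plus-evidence-lower-bound argument.

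\textbf{Step 1: small-ball bound \eqref{eq_Nickl_1_24}.} Take $M$ large enough that $\theta_0\in B_\calR(M)$ and $\norm{\theta_0}_\calH\le M$. By \eqref{eq_Nickl_2_3}, every $\theta$ in $B_\calR(M)$ satisfies $\norm{G(\theta)}_\infty\le U(M)$. By \eqref{eq_Nickl_2_4}, every such $\theta$ with $\norm{\theta-\theta_0}_{(H^\kappa)^\ast}\le \delta_N/L$ also lies in $\mathcal{B}_N(\theta_0,U)$. It therefore suffices to lower bound the probability that $\Pi_N$ assigns to a $(H^\kappa)^\ast$-ball of radius $\delta_N/L$ around $\theta_0$, intersected with $B_\calR(M)$.

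For this, I will apply the Cameron--Martin shift, which costs a factor $e^{-\norm{\theta_0}_\calH^2 N\delta_N^2/2}$ after rescaling. I then need a centred small-ball estimate for $\Pi'$ in $(H^\kappa)^\ast$. That estimate comes from metric entropy bounds of the unit ball of $H^\alpha$ (or $H^\alpha_\cpt$ when $\kappa\ge 1/2$, so that duality with $H^\kappa$ works) in $(H^\kappa)^\ast$, which is of order $\epsilon^{-d_z/(\alpha+\kappa)}$. The $N$-dependent rescaling in \cref{definition_rescaled_Gaussian_priors} is calibrated exactly so that the resulting exponent is of order $N\delta_N^2$. The intersection with $B_\calR(M)$ is handled by Gaussian correlation or Anderson-type arguments, exactly as in Nickl's proof.

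\textbf{Step 2: sieve and entropy, \eqref{eq_Nickl_1_25} and \eqref{eq_Nickl_1_26}.} For the sieve bound I will use Borell's isoperimetric inequality for Gaussian measures. This gives $\Pi_N(\Theta_N^\complement)\le e^{-BN\delta_N^2}$, where $B$ grows quadratically in $M$, so increasing $M$ yields $B>A+2$. The $\calR$-norm constraint is controlled by Fernique's theorem together with the rescaling factor. For the entropy bound, \eqref{eq_Nickl_2_4} reduces $d_G$-covering of $\Theta_N\subseteq B_\calR(M)$ to $(H^\kappa)^\ast$-covering. The first summand in the decomposition of $\Theta_N$ has $(H^\kappa)^\ast$-norm at most $M\delta_N$. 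The second summand is covered using the same entropy estimate as in Step 1 at radius $\delta_N$, which gives $\log N\lesssim \delta_N^{-d_z/(\alpha+\kappa)}=N\delta_N^2$. The constant is absorbed into $\overline m$.

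\textbf{Step 3: contraction bound \eqref{eq_Nickl_1_27}.} This is the main obstacle, because the explicit polynomial bound $(1+c)2(U^2+1)/(N\delta_N^2)$ has to be tracked. I plan to follow \cite[Theorem 1.3.2]{Nic:2023} and split into two parts.

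First, the denominator of \eqref{eq_Nickl_1_12} is lower bounded by $\Pi_N(\mathcal{B}_N)e^{-2N\delta_N^2}$ (modulo constants) on an event of probability at least $1-$ the stated bound. This is proved by restricting to $\nu=\Pi_N(\cdot\cap\mathcal{B}_N)/\Pi_N(\mathcal{B}_N)$, applying Jensen's inequality, and applying Chebyshev's inequality to the centred log-likelihood ratio. Its variance is bounded using $\norm{G}_\infty\le U$ and the Gaussian noise, and this is where the factor $U^2+1$ arises. The factor $(1+c)$ and the threshold $N^*(c)$ absorb lower-order terms.

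Second, the numerator over $\Theta_N\cap\{d_G>mC_U^{-1/2}\delta_N\}$ is handled by tests built from the entropy bound via Hellinger-distance tests. On bounded forward maps, the Hellinger distance dominates $C_U^{1/2}d_G$; this is exactly where $C_U=(1-e^{-U^2/2})/(2U^2)$ comes from. These tests have exponentially small errors $e^{-cm^2N\delta_N^2}$. Together with the sieve bound, and after choosing $m$ large relative to $A$, $b$ and $\overline m$, this yields posterior mass at least $1-e^{-bN\delta_N^2}$ outside the exceptional event. By Markov's inequality, the exponentially small error terms are dominated by the polynomial term for $N\ge N^*(c)$.
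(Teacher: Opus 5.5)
Your proposal is correct and follows essentially the paper's route. The paper takes \eqref{eq_Nickl_1_24}--\eqref{eq_Nickl_1_26} directly from \cite[Theorem 2.2.2]{Nic:2023}, whose proof is the Cameron--Martin/small-ball, Borell--Fernique and entropy argument you sketch; it then proves \eqref{eq_Nickl_1_27} exactly as you propose, via the evidence event controlled by \Cref{lemma_1_3_3_Nickl} with $K=2$, the Hellinger tests of \Cref{lemma_existence_of_tests}, the sieve bound, Markov plus Fubini, and the inclusion $\{h(p_\theta,p_{\theta_0})\le m\delta_N\}\subseteq\{d_G(\theta,\theta_0)\le mC_U^{-1/2}\delta_N\}$ from \eqref{eq_Nickl_1_20}. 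One point to make explicit: when you enlarge $M$ to force $B>A+2$, keep $A$ from your initial small-ball set and use that $\mathcal{B}_N(\theta_0,\mathcal{U})$ is increasing in $\mathcal{U}$ (with $U(\cdot)$ nondecreasing), since recomputing $A$ with $L(M)$ for the larger $M$ could make $A$ grow faster than $B\sim M^2$.
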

  \Cref{theorem_Nickl_2_2_2} is an extended version of \cite[Theorem 2.2.2]{Nic:2023}.
  The conclusion \eqref{eq_Nickl_1_27} is a stronger version of \cite[equation (1.27)]{Nic:2023}, which only states that 
  \begin{equation*}
   \lim_{N\to\infty} P^N_{\theta_0}\left(\Pi_N(\{\theta\in\Theta_N: d_G(\theta,\theta_0)\leq m\delta_N C_{v}^{-1}\}\vert D_N)\leq 1-e^{-bN\delta_N^2}\right)=0,
  \end{equation*}
  while the proof of \cite[Theorem 2.2.2]{Nic:2023} yields \eqref{eq_Nickl_1_27}.
  See \cref{sec_proof_forward_posterior_contraction_result} for a proof of \Cref{theorem_Nickl_2_2_2}.
  
  In \Cref{Whittle_Matern_prior} below, we give an example for a Gaussian base prior distribution $\Pi^\prime $ that satisfies the embedding condition \eqref{eq:RKHS_Sob_emb} of \cref{theorem_Nickl_2_2_2}. This example was already mentioned in the last paragraph of \cite[Section 2.2]{Nic:2023}. 
  \begin{proposition}
	\label{Whittle_Matern_prior}
	Let $\calZ \subset \R^{d_z}$ be a bounded domain with smooth boundary, and let $\zeta$ be a probability measure on $\calZ$ with bounded Lebesgue density $p_\zeta$.
	Then, for any $\alpha > d_z/2$ and $\kappa \ge 0$, there exists a Whittle--Mat\'{e}rn-type Gaussian process $\theta = \{\theta(z): z \in \calZ\}$ such that the law $\Pi^\prime $ of $\theta$ satisfies \cref{assumption_Nickl_Condition_2_2_1} with $\calR \leftarrow H^\beta(\calZ)$ for any $\beta \in [0, \alpha - d_z/2)$.
	Furthermore, the RKHS $\calH$ satisfies the continuous embedding \eqref{eq:RKHS_Sob_emb}.
\end{proposition}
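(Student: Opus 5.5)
We build the prior concretely as a Whittle--Mat\'{e}rn field, following \cite[Section 2.2 and Appendix B]{Nic:2023}. Since $\calZ$ is bounded, we may fix a larger smooth bounded domain $\calZ' \supset \overline{\calZ}$ and a cut-off $\chi \in C_\cpt^\infty(\calZ')$ with $\chi \equiv 1$ on $\calZ$. Let $M$ be a stationary Gaussian process on $\R^{d_z}$ with covariance kernel $K(s,t)=\int_{\R^{d_z}} e^{i\langle s-t,\xi\rangle}(1+|\xi|^2)^{-\alpha}\,\di\xi$. The RKHS of $M$ is $H^\alpha(\R^{d_z})$ with an equivalent norm. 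We then set $\theta \coloneqq \chi M$, or more precisely its restriction to $\calZ$. The law $\Pi'$ is a centred Gaussian measure, and it remains to verify (i) the regularity $\Pi'(H^\beta(\calZ))=1$ together with separability of $\calR$; (ii) the embedding \eqref{eq:RKHS_Sob_emb}; and (iii) that $\Pi'$ is a Borel measure on $\Theta \subseteq L^2_\zeta$.

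\textbf{Regularity.} For $\beta<\alpha-d_z/2$ we show that sample paths of $M$ lie in $H^\beta_{\mathrm{loc}}$ almost surely. Expanding $M$ in an orthonormal basis $(e_k)$ of the RKHS gives $M=\sum_k g_k e_k$ with $g_k$ i.i.d.\ standard normal. The multiplier $\chi$ restricts everything to a compact set, where the embedding $H^\alpha \hookrightarrow H^\beta$ is Hilbert--Schmidt exactly when $\alpha-\beta>d_z/2$. This follows from the singular value asymptotics $s_k \asymp k^{-(\alpha-\beta)/d_z}$ on bounded domains, and square-summability of these values is equivalent to $\alpha-\beta>d_z/2$. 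The Hilbert--Schmidt property then gives $\Exp\norm{\chi M}_{H^\beta}^2=\sum_k \norm{\chi e_k}_{H^\beta}^2<\infty$, so $\chi M \in H^\beta$ almost surely. Restriction to $\calZ$ is continuous, so $\Pi'(H^\beta(\calZ))=1$. The space $H^\beta(\calZ)$ is a separable Hilbert space, hence a separable normed linear subspace.

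\textbf{Borel measure on $\Theta$.} Because $p_\zeta$ is bounded, $\norm{f}_{L^2_\zeta}\le \norm{p_\zeta}_\infty^{1/2}\norm{f}_{L^2}$. Hence $H^\beta(\calZ)\hookrightarrow L^2(\calZ)\hookrightarrow L^2_\zeta(\calZ)$ continuously. We therefore take $\Theta$ to be (the image of) $L^2(\calZ)$ in $L^2_\zeta$. The pushforward of a Gaussian measure under a continuous linear map is a centred Gaussian Borel measure, so $\Pi'$ is Gaussian on $\Theta$.

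\textbf{RKHS embedding.} The RKHS of the image of a Gaussian measure under a continuous linear map $T$ is $T(\text{RKHS})$, equipped with the quotient norm. Here $T f=(\chi f)|_{\calZ}$, so $\calH=\{(\chi h)|_\calZ : h\in H^\alpha(\R^{d_z})\}=H^\alpha(\calZ)$, with norm bounded by $\norm{\chi}_{C^{\lceil\alpha\rceil}}\norm{h}_{H^\alpha}$. This gives $\calH\hookrightarrow H^\alpha(\calZ)$, which is the case $\kappa<1/2$.

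The case $\kappa\ge 1/2$ is the main obstacle, since it requires $\calH\hookrightarrow H^\alpha_\cpt(\calZ)$, i.e.\ compactly supported elements. Here we modify the construction: choose $\chi\in C^\infty_\cpt(\calZ)$ supported strictly inside $\calZ$, following the construction in \cite{Nic:2023}. Then $T h=\chi h$ is compactly supported in $\calZ$ and lies in $H^\alpha$. Multiplication by a $C^\infty_\cpt$ function maps $H^\alpha(\R^{d_z})$ continuously into $H^\alpha_\cpt(\calZ)$, and approximating $h$ by smooth functions shows $\chi h$ belongs to the $H^\alpha$-closure of $C_\cpt^\infty(\calZ)$. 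The regularity argument above is unaffected by this change of cut-off. Thus both alternatives of \eqref{eq:RKHS_Sob_emb} hold for suitable $\chi$, and we may always use the compactly supported version, which covers every $\kappa\ge0$.
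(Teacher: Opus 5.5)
Your proposal is correct and follows the same route as the paper's proof. Both use a Whittle--Mat\'{e}rn field with RKHS $H^\alpha$ and sample paths in $H^\beta(\calZ)$ for $\beta<\alpha-d_z/2$, the bounded density to get $L^2(\calZ)\hookrightarrow L^2_\zeta(\calZ)$, and a cut-off $\chi\in C_\cpt^\infty(\calZ)$ for $\kappa\ge 1/2$. The difference is that you derive the path regularity (via the Hilbert--Schmidt property of $h\mapsto\chi h$ from $H^\alpha$ into $H^\beta$) and the RKHS identification (via the linear-image rule) yourself, whereas the paper simply cites \cite[Theorem B.1.3]{Nic:2023}.

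One small remark: using the cut-off version for every $\kappa$ is valid for the proposition as stated, but it shrinks $\calH$ to functions vanishing near $\partial\calZ$. The paper keeps the uncut process for $\kappa<1/2$ so that $\calH=H^\alpha(\calZ)$, which is what is used later (e.g.\ $\calH=H^\alpha(\X,\calP)$ in the ODE application with $\kappa=0$).
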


\begin{proof}[Proof of \cref{Whittle_Matern_prior}]
Let $\theta$ be a Whittle--Mat\'{e}rn-type Gaussian process on $\calZ$ with covariance function $K_\alpha$; see e.g. the construction in \cite[Section B.1.3]{Nic:2023}.
As stated in \cite[Theorem B.1.3]{Nic:2023}, the RKHS of $\theta$ is then given by $\calH = H^\alpha(\calZ)$ with $\alpha > d_z/2$, and the process $\theta$ almost surely takes values in $\calR = H^\beta(\calZ)$ for every $\beta \in [0, \alpha - d_z/2)$.
Now $H^\beta(\calZ)\subset L^2(\calZ)$. Since $p_\zeta$ is bounded, we may apply statement \cref{embedding_L2_L2zeta} of \Cref{lemma_embedding} to conclude that $ L^2(\calZ) \hookrightarrow L^2_\zeta(\calZ)$. Thus, there exists a linear space $\Theta$ that satisfies $\calR \subseteq \Theta \subseteq L^2_\zeta(\calZ)$, and \Cref{assumption_Nickl_Condition_2_2_1} is satisfied.

Next, we verify that the RKHS $\calH$ satisfies the continuous embedding \eqref{eq:RKHS_Sob_emb}.
Since the RKHS is $H^\alpha(\calZ)$ for $\alpha>d_z/2$ by the paragraph above, the criterion that $\calH\hookrightarrow H^\alpha(\calZ)$ if $\kappa<\frac12$ is satisfied.
If $\kappa \ge \frac12$, then by multiplying the Whittle--Mat\'{e}rn process $\theta$ with a cutoff-function $\xi \in C^\infty(\calZ)$ that is compactly supported in $\calZ$, we obtain a process $\theta^\prime \coloneqq \{\xi(z) \theta(z):z \in \calZ\}$ for which the RKHS is continuously embedded in $H^\alpha_c(\calZ)$; see the math display above \cite[eq. (B.1)]{Nic:2023}. Substituting $\theta$ with $\theta_\xi$ yields the desired result for the case $\kappa \geq \frac12$.
\end{proof}

 We now show that if $\mathbb{B}$ is a Banach space such that $\Pi^\prime (\mathbb{B})=1$ and if uniform posterior contraction with respect to the $\mathbb{B}$ metric holds with some rate $(\delta_N^\eta)_{N\in\N}$ and some suitably bounded set $S$ of candidate truths, then the sequence of posterior means is strongly uniformly stochastically bounded in the $\mathbb{B}$ metric and with the same rate $(\delta_N^\eta)_{N\in\N}$; cf. \eqref{eq_strong_uniform_stochastic_boundedness} in \Cref{definition_uniform_stochastic_boundedness}.
\begin{theorem}[Uniform posterior contraction implies uniform convergence of posterior mean]
 \label{theorem_uniform_posterior_contraction_implies_uniform_convergence_posterior_mean}
 Suppose that the forward model $G$ satisfies \eqref{eq_Nickl_2_3} from \Cref{assumption_Nickl_Condition_2_1_1} and that the base prior $\Pi^\prime $, $\calR$ and $\calH$ satisfy \Cref{assumption_Nickl_Condition_2_2_1}.
 Let $(\Pi_N)_{N\in\mathbb{N}}$ satisfy \Cref{definition_rescaled_Gaussian_priors}, let $\delta_N$ satisfy \eqref{eq_Nickl_2_19}, and let $(\mathbb{B},\norm{\cdot}_{\mathbb{B}})$ be a separable Banach space such that $\Pi^\prime (\mathbb{B})=1$.
Suppose that $S\subset\mathbb{B}\cap\calR$ is bounded in the $\mathbb{B}$ norm and the $\calR$ norm, and that for sufficiently large $b>0$, there exist $0<M,C,C_1,\eta<\infty$ sufficiently large, $0<\mathbf{q}<\infty$ and $N^*\in\N$, such that for every $N\geq N^*$, and for every $\theta_0\in S$,
\begin{equation}
\label{eq_hypothesis_uniform_posterior_contraction_in_Banach_norm}
 P^N_{\theta_0}\left( \Pi_N(\{\theta\in\Theta:\norm{\theta}_{\calR}\leq M,\norm{\theta-\theta_0}_{\mathbb{B}}\leq C \delta_N^\eta\} \vert D_N)\leq 1-e^{-bN\delta_N^2}\right)\leq \frac{C_1}{(N\delta_N^2)^\mathbf{q}},
\end{equation}
and suppose that there exists $0<\mathbf{A}=\mathbf{A}(S,U)<\infty$ such that for the envelope constant $U=U(M)$ as in \eqref{eq_Nickl_2_3},
\begin{equation}
\label{eq_uniform_constant_A}
   \forall \theta_0\in S, N\in\N,\quad \Pi_N(\mathcal{B}_N(\theta_0,U))\geq e^{-\mathbf{A}N\delta_N^2}.
\end{equation}
Then for $N\geq N^*$,
\begin{equation}
\label{eq_concentration_for_posterior_mean}
 \sup_{\theta_0\in S}P^N_{\theta_0}\left(\norm{\Exp^{\Pi_N}[\theta\vert D_N]-\theta_0}_{\mathbb{B}}>C\delta^\eta_N\right)\leq \frac{C_1+2(U^2+1)}{(N\delta_N^2)^{1\wedge \mathbf{q}}}.
\end{equation}
In particular, $(\norm{\Exp^{\Pi_N}[\theta\vert D_N]-\theta_0}_{\mathbb{B}})_{N\in\N}$ is strongly uniformly stochastically bounded with respect to $(\delta_N^\eta)_{N\in\N}$ over $\theta_0 \in S$ and probability decay rate proportional to $(1/(N\delta_N^2)^{1\wedge \mathbf{q}})_{N\in\N}$.
\end{theorem}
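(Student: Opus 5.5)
The plan follows the standard argument for posterior means in \cite[Theorem 2.3.2]{Nic:2023}, keeping every constant uniform in $\theta_0\in S$. Fix $\theta_0\in S$ and $N\geq N^*$, and let
\[
A_N\coloneqq\{\theta\in\Theta:\norm{\theta}_{\calR}\leq M,\ \norm{\theta-\theta_0}_{\mathbb{B}}\leq C\delta_N^\eta\}.
\]
Since $\Pi^\prime(\mathbb{B})=1$ and $\mathbb{B}$ is separable, the posterior mean is a Bochner integral in $\mathbb{B}$ whenever the posterior has a finite first $\mathbb{B}$-moment. Jensen's inequality for Bochner integrals then gives
\[
\norm{\Exp^{\Pi_N}[\theta\vert D_N]-\theta_0}_{\mathbb{B}}\leq C\delta_N^\eta\,\Pi_N(A_N\vert D_N)+\Exp^{\Pi_N}\bigl[\norm{\theta-\theta_0}_{\mathbb{B}}\mathbf{1}_{A_N^\complement}(\theta)\vert D_N\bigr].
\]
Applying Cauchy--Schwarz to the second term bounds it by $\Exp^{\Pi_N}[\norm{\theta-\theta_0}_{\mathbb{B}}^2\vert D_N]^{1/2}\,\Pi_N(A_N^\complement\vert D_N)^{1/2}$. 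By \eqref{eq_hypothesis_uniform_posterior_contraction_in_Banach_norm}, outside an event of $P^N_{\theta_0}$-probability at most $C_1(N\delta_N^2)^{-\mathbf{q}}$ we have $\Pi_N(A_N^\complement\vert D_N)<e^{-bN\delta_N^2}$.

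Next I control the posterior second moment. Write
\[
\Exp^{\Pi_N}[\norm{\theta-\theta_0}_{\mathbb{B}}^2\vert D_N]=\frac{\int\norm{\theta-\theta_0}_{\mathbb{B}}^2e^{\ell_N(\theta)-\ell_N(\theta_0)}\di\Pi_N(\theta)}{\int e^{\ell_N(\theta)-\ell_N(\theta_0)}\di\Pi_N(\theta)}.
\]
For the denominator, I use the small-ball lower bound that underlies the proof of \Cref{theorem_Nickl_2_2_2} (cf.\ \cite[Lemma 1.3.3]{Nic:2023}) together with \eqref{eq_uniform_constant_A}. Outside an event of probability at most $2(U^2+1)/(N\delta_N^2)$, the denominator is at least $\Pi_N(\mathcal{B}_N(\theta_0,U))e^{-2N\delta_N^2}\geq e^{-(\mathbf{A}+2)N\delta_N^2}$. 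This is the step where the uniform constant $\mathbf{A}$ and the envelope $U$ (rather than constants depending on $\theta_0$) are essential, and it produces the term $2(U^2+1)$ in \eqref{eq_concentration_for_posterior_mean}.

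For the numerator, Fubini and $\Exp_{\theta_0}e^{\ell_N(\theta)-\ell_N(\theta_0)}=1$ give
\[
\Exp^N_{\theta_0}\Bigl[\int\norm{\theta-\theta_0}_{\mathbb{B}}^2e^{\ell_N-\ell_N(\theta_0)}\di\Pi_N\Bigr]=\int\norm{\theta-\theta_0}_{\mathbb{B}}^2\di\Pi_N\leq2\sup_{\theta_0\in S}\norm{\theta_0}_{\mathbb{B}}^2+2\Exp^{\Pi^\prime}\norm{\theta^\prime}_{\mathbb{B}}^2.
\]
The right-hand side is finite by Fernique's theorem, uniformly over $S$ because $S$ is $\mathbb{B}$-bounded, and the rescaling factor in \Cref{definition_rescaled_Gaussian_priors} is at most $1$. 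Markov's inequality at level $e^{N\delta_N^2}$ then shows that the numerator is at most $e^{N\delta_N^2}$ outside an event of probability of order $e^{-N\delta_N^2}$. Since $N\delta_N^2\to\infty$, this probability is absorbed into the polynomial rate $(N\delta_N^2)^{-1}$ for large $N$.

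On the intersection of the three good events, the tail term is at most $\exp\bigl(-\tfrac12(b-\mathbf{A}-3)N\delta_N^2\bigr)$. This decays faster than any power of $N$, whereas $\delta_N^\eta$ decays only polynomially. Choosing $b>\mathbf{A}+3$ (allowed because $b$ is ``sufficiently large'') makes it negligible compared with $\delta_N^\eta$. A union bound over the three bad events then gives \eqref{eq_concentration_for_posterior_mean}, with exponent $1\wedge\mathbf{q}$ coming from $(N\delta_N^2)^{-\mathbf{q}}+(N\delta_N^2)^{-1}\leq2(N\delta_N^2)^{-(1\wedge\mathbf{q})}$ for $N\delta_N^2\geq1$.

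The main obstacle is bookkeeping rather than ideas. The bound above gives the threshold $C\delta_N^\eta+o(\delta_N^\eta)$ instead of exactly $C\delta_N^\eta$. I would handle this by applying the hypothesis with $C$ replaced by a slightly smaller constant, or by enlarging $C$ and $N^*$ (the statement allows $C$ and $N^*$ to be taken sufficiently large). Similarly, the $e^{-N\delta_N^2}$ Markov term and the factor $2$ are absorbed into $C_1+2(U^2+1)$ by enlarging $N^*$. Each of these enlargements must depend only on $S$, $U$, $\mathbf{A}$, $b$ and the prior, and never on the particular $\theta_0$.
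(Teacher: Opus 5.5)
Your proposal is correct and is essentially the paper's proof. Both arguments use the same Jensen/Cauchy--Schwarz split around the contraction set, the hypothesis for the posterior tail mass, the evidence lower bound from Nickl's Lemma 1.3.3 with the uniform constant $\mathbf{A}$ (which gives the $2(U^2+1)/(N\delta_N^2)$ term), and Tonelli plus Markov with the finite Gaussian second moment for the numerator; the only difference is that you apply Markov at level $e^{N\delta_N^2}$ (so you need $b>\mathbf{A}+3$), whereas the paper applies it directly at level $(C\delta_N^\eta)^2e^{(b-\mathbf{A}-2)N\delta_N^2}$ (needing $b>\mathbf{A}+2$). The paper's own argument likewise only yields the threshold $2C\delta_N^\eta$ for sufficiently large $N$ and drops the exponentially small remainder, so enlarging $C$ and $N^*$ is the right fix, but shrinking $C$ is not available, since the hypothesis is only guaranteed for the given $C$ (and any larger one).
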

For the proof of \Cref{theorem_uniform_posterior_contraction_implies_uniform_convergence_posterior_mean}, see \Cref{sec_proof_uniform_posterior_contraction_implies_uniform_convergence_posterior_mean}.

The hypothesis \eqref{eq_uniform_constant_A} is a condition on the collection $(\mathcal{B}_N(\theta_0,U))_{\theta_0\in S}$ of sets defined by \eqref{eq_Nickl_1_23}.
It is a uniform version of \eqref{eq_Nickl_1_24}, in the sense that the possibly $\theta_0$-dependent exponent $A$ in \eqref{eq_Nickl_1_24} is replaced by an exponent $\mathbf{A}$ that depends only on $S$.

In \Cref{theorem_uniform_posterior_contraction_implies_uniform_convergence_posterior_mean}, two key hypotheses on the set $S$ of candidate true parameters $\theta_0$ are that $S$ is bounded in the $\calR$ norm, and that there exists a scalar $\mathbf{A}$ such that \eqref{eq_uniform_constant_A} holds.
In \cref{proposition_existence_of_uniform_U_and_uniform_A}, we show that if the set $S$ of candidate true parameters is bounded in the $\calH$ norm, then these two key hypotheses are satisfied.

\begin{restatable}{proposition}{existenceOfUniformUandA}
 \label{proposition_existence_of_uniform_U_and_uniform_A}
  Suppose that the hypotheses of \Cref{theorem_Nickl_2_2_2} hold, with the mapping $M'\mapsto U(M')$ as in \eqref{eq_Nickl_2_3}.
If $S\subset\calR$ is bounded in the $\calH$ norm, then $S$ is bounded in the $\calR$ norm.
For every $0<M_0<\infty$, and for $U=U(M_0+\sup_{\theta_0\in S}\norm{\theta_0}_{\calR})$, there exists $0<\mathbf{A}=\mathbf{A}(S,U)<\infty$ that satisfies \eqref{eq_uniform_constant_A}.
\end{restatable}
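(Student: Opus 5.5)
The plan is to make the $\theta_0$-dependence in the proof of \eqref{eq_Nickl_1_24} from \Cref{theorem_Nickl_2_2_2} explicit, and to check that it enters only through $\norm{\theta_0}_{\calH}$ and $\norm{\theta_0}_{\calR}$.

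\textbf{Step 1: $\calH$-bounded implies $\calR$-bounded.} By \Cref{assumption_Nickl_Condition_2_2_1}, $\Pi^\prime$ is a centred Gaussian measure concentrated on the separable normed space $\calR$. Its RKHS $\calH$ is therefore contained in $\calR$ (or in its completion, if $\calR$ is not a Banach space) and is continuously embedded: $\norm{h}_{\calR}\le c_0\norm{h}_{\calH}$. This is the standard fact recalled after \Cref{assumption_Nickl_Condition_2_2_1}. Hence $\sup_{\theta_0\in S}\norm{\theta_0}_{\calR}\le c_0\sup_{\theta_0\in S}\norm{\theta_0}_{\calH}<\infty$.

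\textbf{Step 2: reduction to a prior small-ball probability.} Put $R_S\coloneqq\sup_{S}\norm{\theta_0}_{\calR}$, $M\coloneqq M_0+R_S$, $U=U(M)$ and $L=L(M)$. Fix $\theta_0\in S$ and let $\theta$ satisfy
\[
\norm{\theta-\theta_0}_{(H^\kappa)^\ast}\le \delta_N/L \quad\text{and}\quad \norm{\theta-\theta_0}_{\calR}\le M_0 .
\]
Then $\theta,\theta_0\in B_{\calR}(M)$. So \eqref{eq_Nickl_2_3} gives $\norm{G(\theta)}_\infty\le U$, and \eqref{eq_Nickl_2_4} gives $d_G(\theta,\theta_0)\le\delta_N$, that is, $\theta\in\mathcal B_N(\theta_0,U)$. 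Consequently
\[
\Pi_N(\mathcal B_N(\theta_0,U))\ge \Pi_N\bigl(\theta-\theta_0\in C_N\bigr),\qquad C_N\coloneqq\{\norm{\cdot}_{(H^\kappa)^\ast}\le\delta_N/L\}\cap B_{\calR}(M_0).
\]
The set $C_N$ is symmetric and convex, and it does not depend on $\theta_0$.

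\textbf{Step 3: Cameron--Martin shift.} The RKHS of $\Pi_N$ is $\calH$ with norm
\[
\norm{h}_{\calH_N}=N^{d_z/(4\alpha+4\kappa+2d_z)}\norm{h}_{\calH},\qquad\text{so}\qquad \norm{h}_{\calH_N}^2=N\delta_N^2\norm{h}_{\calH}^2 .
\]
Since $\theta_0\in\calH$, the Cameron--Martin/Anderson shift inequality for symmetric convex sets gives
\[
\Pi_N(\theta_0+C_N)\ge e^{-N\delta_N^2\norm{\theta_0}_{\calH}^2/2}\,\Pi_N(C_N).
\]
The exponent is bounded by $N\delta_N^2\sup_S\norm{\theta_0}_{\calH}^2/2$, uniformly over $S$.

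\textbf{Step 4: the centred small-ball bound (main obstacle).} It remains to show $\Pi_N(C_N)\ge e^{-A_0N\delta_N^2}$ with $A_0$ depending only on $M_0$, $L$ and the prior. This is exactly the centred estimate inside the proof of \eqref{eq_Nickl_1_24} (i.e.\ \cite[Theorem 2.2.2]{Nic:2023}), which I would reuse verbatim, since nothing in it involves $\theta_0$.

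First, the Gaussian correlation inequality splits $\Pi_N(C_N)$ into the product of $\Pi_N(\norm{\cdot}_{(H^\kappa)^\ast}\le\delta_N/L)$ and $\Pi_N(B_{\calR}(M_0))$.

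The second factor equals $\Pi^\prime(\norm{\theta'}_{\calR}\le M_0N^{d_z/(4\alpha+4\kappa+2d_z)})$. This tends to $1$, so it is bounded below by a positive constant.

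For the first factor, the embedding \eqref{eq:RKHS_Sob_emb} gives metric entropy of the unit ball of $\calH$ in $(H^\kappa)^\ast$ of order $\epsilon^{-d_z/(\alpha+\kappa)}$. The Kuelbs--Li--Linde small-ball theorem then yields
\[
-\log\Pi^\prime\bigl(\norm{\theta'}_{(H^\kappa)^\ast}\le\epsilon\bigr)\lesssim\epsilon^{-2d_z/(2\alpha+2\kappa-d_z)} .
\]
Inserting $\epsilon=N^{d_z/(4\alpha+4\kappa+2d_z)}\delta_N/L$ and simplifying the exponents gives a bound $\lesssim N\delta_N^2$. This is the delicate computation, and I would take it from Nickl rather than redo it.

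\textbf{Step 5: all $N$.} Steps 2--4 give the bound for $N\ge N_1$, with $N_1$ independent of $\theta_0$. For each of the finitely many $N<N_1$, the set $C_N$ is a symmetric convex neighbourhood of $0$ in $\calR$, so $\Pi_N(C_N)>0$, and the shift bound is again uniform over $S$.

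Setting $\mathbf A\coloneqq \sup_S\norm{\theta_0}_{\calH}^2/2+A_0$, enlarged as needed to cover the finitely many small $N$, yields \eqref{eq_uniform_constant_A}.
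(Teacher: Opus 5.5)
Your proposal is correct and follows essentially the same route as the paper. Both arguments reduce to $\Pi_N(\theta-\theta_0\in C_N)$ via the local bounds at radius $M_0+\sup_{S}\norm{\theta_0}_{\calR}$, then apply the Cameron--Martin shift bound for symmetric sets, the Gaussian correlation inequality, and Nickl's centred small-ball estimate.

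Your shift exponent $N\delta_N^2\norm{\theta_0}_{\calH}^2/2$ uses the RKHS norm of the rescaled prior $\Pi_N$ and is in fact the correct one; the paper's display drops the factor $N\delta_N^2$, which is harmless for the conclusion. Your handling of arbitrary $M_0$ and of small $N$ is more careful than the paper's. For small $N$, however, positivity of $\Pi_N(C_N)$ is better obtained from the correlation inequality than by calling $C_N$ a neighbourhood of $0$ in $\calR$, since the latter needs $\calR\hookrightarrow(H^\kappa)^\ast$.
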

For the proof of \Cref{proposition_existence_of_uniform_U_and_uniform_A}, see \Cref{sec_proof_of_existence_of_uniform_U_and_uniform_A}.

We conclude this section with an observation that we shall use in later proofs.
\begin{remark}
 \label{remark_hypotheses}
The hypotheses that are common to both \Cref{theorem_Nickl_2_2_2} and \Cref{theorem_uniform_posterior_contraction_implies_uniform_convergence_posterior_mean} are that  \Cref{assumption_Nickl_Condition_2_1_1,assumption_Nickl_Condition_2_2_1} hold, and that $(\Pi_N)_{N\in\N}$ and $(\delta_N)_{N\in\N}$ satisfy \Cref{definition_rescaled_Gaussian_priors} and \eqref{eq_Nickl_2_19} respectively. 
The hypotheses of \Cref{theorem_uniform_posterior_contraction_implies_uniform_convergence_posterior_mean} that are not included in the hypotheses of \Cref{theorem_Nickl_2_2_2} are as follows. First, for some separable Banach space $\mathbb{B}$, we have $\Pi^\prime(\mathbb{B})=1$ and that $S\subset \mathbb{B}\cap\calR$ is bounded in the $\mathbb{B}$ norm and the $\calR$ norm. Second, the forward contraction result \eqref{eq_hypothesis_uniform_posterior_contraction_in_Banach_norm} is assumed to hold. Third, the uniform prior mass lower bound condition \eqref{eq_uniform_constant_A} is assumed to hold.
\end{remark}

\subsection{Uniform posterior contraction and distinguishability in \footnotesize{$L^2_\zeta$} \normalsize{norm}}
\label{subsection_uniform_posterior_contraction_L2}

In the previous section, we showed in  \Cref{theorem_uniform_posterior_contraction_implies_uniform_convergence_posterior_mean} that uniform posterior contraction in some Banach space norm implies uniform convergence of the posterior mean in the same norm.
In this section, we focus on the case where the Banach space is the space $L^2_\zeta(\calZ,W)$.
In \Cref{theorem_uniform_posterior_contraction_L2}, we state a version of the $L^2_\zeta$ posterior contraction result \cite[Theorem 2.3.1]{Nic:2023}, which was stated for a single true parameter $\theta_0$. Our version holds uniformly for all candidate true parameters $\theta_0$ belonging to a suitably bounded set $S$.
Combining \Cref{theorem_uniform_posterior_contraction_L2} and \Cref{theorem_uniform_posterior_contraction_implies_uniform_convergence_posterior_mean}, we obtain uniform $L^2_\zeta$ convergence of the posterior mean; see \Cref{corollary_uniform_L2_convergence_posterior_mean}.
The final result of this section, \Cref{corollary_L2_distinguishability_posterior_mean}, then uses \cref{corollary_distinguishability_under_strong_uniform_stochastic_boundedness} to obtain $L^2_\zeta$ distinguishability from the uniform $L^2_\zeta$ convergence of the posterior mean.

For \Cref{theorem_uniform_posterior_contraction_L2} below, recall the definitions \eqref{eq_Nickl_2_19}, \eqref{eq_Nickl_2_20}, and \eqref{eq_C_of_U_function} of $\delta_N$, $\Theta_N(M)$, and the map $u\mapsto C_u$.
The importance of \Cref{theorem_uniform_posterior_contraction_L2} in the context of this paper is to provide sufficient conditions for the hypothesis \eqref{eq_hypothesis_uniform_posterior_contraction_in_Banach_norm} of \Cref{theorem_uniform_posterior_contraction_implies_uniform_convergence_posterior_mean} to hold.

\begin{theorem}[Uniform $L^2_\zeta$ posterior contraction]
 \label{theorem_uniform_posterior_contraction_L2}
Suppose the hypotheses of \Cref{theorem_Nickl_2_2_2} hold. Suppose that $S\subset\calR\cap\calH$ is bounded in the $\calH$ norm.
Then for every $b>0$, we can choose $\sup_{\theta_0\in S}\norm{\theta_0}_{\calR}<M<\infty$ and $0<m<\infty$ large enough, such that for some $U=U(M)$, some $N^*\in\N$, every $N\geq N^*$ and every $\theta_0\in S$, 
\begin{equation*}
   P^N_{\theta_0}\left(\Pi_N(\{\theta\in\Theta_N(M): d_G(\theta,\theta_0)\leq mC_{U}^{-1/2}\delta_N \}\vert D_N)\leq 1-e^{-bN\delta_N^2}\right)\leq \frac{4(U^2+1)}{N\delta_N^2}.
\end{equation*}
If in addition \Cref{assumption_local_Lipschitz_continuity_of_restricted_inverse_forward_map} holds for $\calR$, some $\eta>0$, and some $L^\prime=L^\prime(M)$, then for every $N\geq N^*$ and every $\theta_0\in S$,
\begin{equation}
 \label{eq_Nickl_2_26}
  P^N_{\theta_0}(\Pi_N(\{\theta\in\Theta_N(M):\norm{\theta-\theta_0}_{L^2_\zeta(\calZ,W)}\leq L^\prime(mC_U^{-1/2}\delta_N)^\eta\}\vert D_N)\leq 1-e^{-bN\delta_N^2})\leq \frac{4(U^2+1)}{N\delta_N^2}.
\end{equation}
\end{theorem}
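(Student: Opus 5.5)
The plan is to redo the proof of \Cref{theorem_Nickl_2_2_2} with every $\theta_0$-dependent constant replaced by one that depends only on $S$. The first conclusion then follows from the general contraction argument underlying \eqref{eq_Nickl_1_27}, and the second is deduced deterministically from the first via \Cref{assumption_local_Lipschitz_continuity_of_restricted_inverse_forward_map}.

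\textbf{Fixing uniform constants.} By \Cref{proposition_existence_of_uniform_U_and_uniform_A}, $R\coloneqq\sup_{\theta_0\in S}\norm{\theta_0}_{\calR}<\infty$. I fix some $M_0>0$ and set $U\coloneqq U(M_0+R)$. The same proposition then gives $\mathbf{A}=\mathbf{A}(S,U)$ with $\Pi_N(\mathcal{B}_N(\theta_0,U))\ge e^{-\mathbf{A}N\delta_N^2}$ for all $\theta_0\in S$ and all $N$. This is the uniform replacement of \eqref{eq_Nickl_1_24}.

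Given $b>0$, I choose $B>\mathbf{A}+2+b$ and then choose $M>R$ so large that \eqref{eq_Nickl_1_25} holds with this $B$ and \eqref{eq_Nickl_1_26} holds for some $\overline{m}=\overline{m}(M,L(M))$. The key observation is that these two properties concern only the prior $\Pi_N$ and the sets $\Theta_N(M)$. Their proofs rely on Gaussian concentration (Borell's inequality) and entropy bounds for $\Theta_N(M)$, neither of which involves $\theta_0$. Hence $M$, $B$, $\overline{m}$, and the threshold in $N$ for these two properties are uniform over $S$. I take $U$ at least as large as needed for the envelope on $\Theta_N(M)$, enlarging it via monotonicity of $\mathcal{B}_N$ in its second argument if necessary.

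\textbf{First conclusion.} I rerun the argument of the general contraction theorem used in the proof of \eqref{eq_Nickl_1_27}, checking that every constant is now uniform.
\begin{enumerate}
\item \emph{Denominator.} The evidence lower bound $\int e^{\ell_N-\ell_N(\theta_0)}\,\di\Pi_N\ge \Pi_N(\mathcal{B}_N)e^{-2N\delta_N^2}$ holds on an event whose complement has $P^N_{\theta_0}$-probability at most $(1+c)\,2(U^2+1)/(N\delta_N^2)$. This bound comes from Chebyshev's inequality using a variance bound that depends only on $U$.
\item \emph{Tests.} Tests for $d_G$-balls are built from the covering \eqref{eq_Nickl_1_26}. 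Their error exponents depend only on $U$, $\overline{m}$ and $m$.
\item \emph{Remainder.} The sieve complement is controlled by \eqref{eq_Nickl_1_25}.
\end{enumerate}
Taking $c=1$ gives the stated bound $4(U^2+1)/(N\delta_N^2)$ for all $N\ge N^*$, where $N^*$ is uniform because each ingredient has a uniform threshold.

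\textbf{Second conclusion.} On the event where the $d_G$-contraction holds, I use the following inclusion for $N$ large enough that $mC_U^{-1/2}\delta_N\le\delta_0(M)$: if $\theta\in\Theta_N(M)\subset B_\calR(M)$ and $d_G(\theta,\theta_0)\le mC_U^{-1/2}\delta_N$, then since $\theta_0\in B_\calR(M)$ (because $M>R$), \eqref{eq_stability_estimate} yields $\norm{\theta-\theta_0}_{L^2_\zeta}\le L'(mC_U^{-1/2}\delta_N)^\eta$. Monotonicity of the posterior probability under this inclusion gives \eqref{eq_Nickl_2_26}, after enlarging $N^*$.

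\textbf{Main obstacle.} The delicate step is verifying that the threshold $N^*$ and the test constants in the general contraction theorem genuinely do not depend on $\theta_0$. In particular, the tests must be constructed so that their type-$1$ and type-$2$ errors depend on $\theta_0$ only through the envelope $U$. I would inspect the proof of \Cref{theorem_Nickl_2_2_2} in \cref{sec_proof_forward_posterior_contraction_result} step by step to confirm this.
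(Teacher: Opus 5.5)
Your proposal is correct and follows essentially the same route as the paper: obtain a uniform $\mathbf{A}$ from \Cref{proposition_existence_of_uniform_U_and_uniform_A}, use monotonicity of $\mathcal{B}_N(\theta_0,\cdot)$ in its second argument to enlarge $M$ and $U$ so that the $\theta_0$-free bounds \eqref{eq_Nickl_1_25}--\eqref{eq_Nickl_1_26} hold with $B>\mathbf{A}+2+b$, rerun the proof of \Cref{theorem_Nickl_2_2_2} with these uniform constants, and deduce \eqref{eq_Nickl_2_26} from the stability estimate (your explicit restriction to $N$ with $mC_U^{-1/2}\delta_N\le\delta_0$ is slightly more careful than the paper's argument). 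One bookkeeping correction: the Chebyshev bound for the evidence event is exactly $2(U^2+1)/(N\delta_N^2)$, and the factor $(1+c)$ is the slack that absorbs the exponentially small test and sieve terms, which is what yields $4(U^2+1)/(N\delta_N^2)$ at $c=1$.
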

For the proof of \Cref{theorem_uniform_posterior_contraction_L2}, see \Cref{sec_proof_uniform_posterior_contraction_distinguishability_L2}.

The posterior contraction rate $\delta_N^\eta = N^{-\eta(\alpha + \kappa)/(2\alpha + 2\kappa + d_z)}$ established in \cref{theorem_uniform_posterior_contraction_L2} is the same as the rate in \cite[Theorem 2.3.1]{Nic:2023}. The key difference between \cref{theorem_uniform_posterior_contraction_L2} and \cite[Theorem 2.3.1]{Nic:2023} is that in \cref{theorem_uniform_posterior_contraction_L2}, the conclusion of posterior contraction holds uniformly for all $\theta_0$ from the set $S$, whereas \cite[Theorem 2.3.1]{Nic:2023} is stated for only a single parameter $\theta_0$. 
	By \eqref{eq_Nickl_2_19}, the rate of convergence of the described probabilities in both results is
	\[ \left(N\delta_N^2\right)^{-1} = N^{-\frac{d_z}{2\alpha + 2\kappa + d_z}}. \]

For the following result, recall the definition of strong uniform stochastic boundedness from \Cref{definition_uniform_stochastic_boundedness}, the definition \eqref{eq_Nickl_2_19} of $\delta_N$, and the parameter $\eta>0$ from \Cref{assumption_local_Lipschitz_continuity_of_restricted_inverse_forward_map}.
\begin{corollary}[Uniform $L^2_\zeta$ convergence of posterior mean]
 \label{corollary_uniform_L2_convergence_posterior_mean}
 Suppose that all the hypotheses of \Cref{theorem_uniform_posterior_contraction_L2} hold, for a given set $S$.
Then, for $M$, $m$, $U=U(M)$, $N^*\in\N$, and $L^\prime =L^\prime (M)$ as in \Cref{theorem_uniform_posterior_contraction_L2}, we have
 \begin{equation}
 	\label{eq:conc_ineq_post_mean}
\sup_{\theta_0\in S} P^N_{\theta_0} \left(\norm{\Exp^{\Pi_N}[\theta\vert D_N]-\theta_0}_{L^2_\zeta} > L^\prime(mC_U^{-1/2})^\eta\delta_N^\eta\right) \leq \frac{6(U^2+1)}{N\delta_N^2},\qquad\forall N\geq N^*.
 \end{equation}
In particular, $(\norm{\Exp^{\Pi_N}[\theta \vert D_N] - \theta_0}_{L^2_\zeta})_{N\in\N}$ is strongly uniformly stochastically bounded with respect to $(\delta_N^\eta)_{N\in\N}$ over $\theta_0\in S$, with probability decay rate proportional to $(1/N\delta_N^2)_{N\in\N}$.
\end{corollary}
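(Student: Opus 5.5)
The plan is to apply \Cref{theorem_uniform_posterior_contraction_implies_uniform_convergence_posterior_mean} with $\mathbb{B}\leftarrow L^2_\zeta(\calZ,W)$, $C\leftarrow L^\prime(mC_U^{-1/2})^\eta$, $C_1\leftarrow 4(U^2+1)$ and $\mathbf{q}\leftarrow 1$. If these choices are admissible, then \eqref{eq_concentration_for_posterior_mean} gives the bound $(4(U^2+1)+2(U^2+1))/(N\delta_N^2) = 6(U^2+1)/(N\delta_N^2)$. This is exactly \eqref{eq:conc_ineq_post_mean}, so the remaining work is to verify the hypotheses of that theorem; \Cref{remark_hypotheses} lists the ones not already contained in \Cref{theorem_Nickl_2_2_2}.

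First, the space $\mathbb{B}=L^2_\zeta(\calZ,W)$ is a separable Banach space. We have $\Pi^\prime(\mathbb{B})=1$ because $\Pi^\prime$ is a Borel measure on $\Theta\subseteq L^2_\zeta$ by \Cref{assumption_Nickl_Condition_2_2_1}. Next, $S\subset\calR\cap\calH$ is bounded in $\calH$. By \Cref{proposition_existence_of_uniform_U_and_uniform_A}, $S$ is therefore bounded in $\calR$. Since $\calH\hookrightarrow\Theta\subseteq L^2_\zeta$ continuously (the RKHS embeds continuously into the support space), $S$ is also bounded in the $L^2_\zeta$ norm.

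Second, I will verify the contraction hypothesis \eqref{eq_hypothesis_uniform_posterior_contraction_in_Banach_norm}. Every $\theta\in\Theta_N(M)$ satisfies $\norm{\theta}_\calR\le M$, so for each $N$ we have the inclusion
\[
\{\theta\in\Theta_N(M):\norm{\theta-\theta_0}_{L^2_\zeta}\le C\delta_N^\eta\}\subseteq\{\theta\in\Theta:\norm{\theta}_\calR\le M,\ \norm{\theta-\theta_0}_{L^2_\zeta}\le C\delta_N^\eta\}.
\]
Monotonicity of the posterior then shows that the event in \eqref{eq_hypothesis_uniform_posterior_contraction_in_Banach_norm} is contained in the event in \eqref{eq_Nickl_2_26}. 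Hence \eqref{eq_Nickl_2_26} yields \eqref{eq_hypothesis_uniform_posterior_contraction_in_Banach_norm} with the constants above, for every $\theta_0\in S$ and every $N\ge N^*$. \Cref{theorem_uniform_posterior_contraction_L2} allows any $b>0$, which covers the requirement ``for sufficiently large $b$''.

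Third, we need the uniform small-ball bound \eqref{eq_uniform_constant_A} for $U=U(M)$. \Cref{proposition_existence_of_uniform_U_and_uniform_A} provides $\mathbf{A}$ for $U=U(M_0+\sup_{S}\norm{\theta_0}_\calR)$ with any $M_0>0$. Since $M>\sup_S\norm{\theta_0}_\calR$, I can take $M_0\coloneqq M-\sup_S\norm{\theta_0}_\calR$, which makes the two values of $U$ coincide. This matching of $U$ is the one point needing care. If the $M$ from \Cref{theorem_uniform_posterior_contraction_L2} came with a different $U$, I would instead use that $\mathcal{B}_N(\theta_0,\cdot)$ is increasing in its second argument, and that the bound only worsens harmlessly if $U$ is enlarged. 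With this, all hypotheses hold, and \eqref{eq_concentration_for_posterior_mean} gives \eqref{eq:conc_ineq_post_mean}.

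Strong uniform stochastic boundedness follows directly from \Cref{definition_uniform_stochastic_boundedness}. Take $K=L^\prime(mC_U^{-1/2})^\eta$, which does not depend on $N$, together with $N_0=N^*$ and $\epsilon_N=6(U^2+1)/(N\delta_N^2)$. The set $\{\ge\}$ versus $\{>\}$ discrepancy is handled by enlarging $K$ slightly, or by noting that the proof of the theorem gives the non-strict version as well.
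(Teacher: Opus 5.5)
Your proposal is correct and follows the paper's proof essentially step for step. Both apply \Cref{theorem_uniform_posterior_contraction_implies_uniform_convergence_posterior_mean} with $\mathbb{B}\leftarrow L^2_\zeta$, $C\leftarrow L^\prime(mC_U^{-1/2})^\eta$, $C_1\leftarrow 4(U^2+1)$ and $\mathbf{q}\leftarrow 1$, after checking $\Pi^\prime(L^2_\zeta)=1$, boundedness of $S$ in $\calR$ and $L^2_\zeta$ via the RKHS embedding, the contraction hypothesis via \eqref{eq_Nickl_2_26}, and \eqref{eq_uniform_constant_A} via \Cref{proposition_existence_of_uniform_U_and_uniform_A}; the inclusion $\Theta_N(M)\subseteq B_\calR(M)$, the $\geq$ versus $>$ adjustment of $K$, and the monotonicity of $\mathcal{B}_N(\theta_0,\cdot)$ in $U$ are details the paper leaves implicit, and the monotonicity (not your matching $M_0\coloneqq M-\sup_S\norm{\theta_0}_\calR$) is what actually lets you fix $\mathbf{A}$ from a base radius before choosing the $b$-dependent $M$.
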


\begin{remark}
\label{remark_uniform_L2_convergence_posterior_mean}
 Note that if $\zeta$ admits a Lebesgue density $p_\zeta$ with strictly positive essential infimum, then it follows from the proof of statement \cref{embedding_L2zeta_L2} of \Cref{lemma_embedding} that 
\begin{equation*}
 \norm{\Exp^{\Pi_N}[\theta\vert D_N]-\theta_0}_{L^2_\zeta} > L^\prime(mC_U^{-1/2})^\eta\delta_N^\eta \Longleftarrow \norm{\Exp^{\Pi_N}[\theta\vert D_N]-\theta_0}_{L^2}\essinf_{z\in\calZ}\{p_\zeta(z)\} > L^\prime(mC_U^{-1/2})^\eta\delta_N^\eta .
\end{equation*}
By the above display, \eqref{eq:conc_ineq_post_mean} implies 
\begin{equation*}
 \sup_{\theta_0\in S} P^N_{\theta_0} \left(\norm{\Exp^{\Pi_N}[\theta\vert D_N]-\theta_0}_{L^2} > \bigr(\essinf_{z\in\calZ}\{p_\zeta(z)\}\bigr)^{-1} L^\prime(mC_U^{-1/2})^\eta\delta_N^\eta\right) \leq \frac{6(U^2+1)}{N\delta_N^2},\qquad\forall N\geq N^*,
\end{equation*}
which is precisely uniform $L^2$ convergence of the posterior mean, instead of $L^2_\zeta$ convergence as in \Cref{corollary_uniform_L2_convergence_posterior_mean}.
\end{remark}

\begin{proof}[Proof of \Cref{corollary_uniform_L2_convergence_posterior_mean}]
 The proof consists in applying \Cref{theorem_uniform_posterior_contraction_implies_uniform_convergence_posterior_mean}. 
 First, recall that the hypotheses of \Cref{theorem_uniform_posterior_contraction_L2} are that the hypotheses of \Cref{theorem_Nickl_2_2_2} hold and that $S$ is bounded in the $\calH$ norm.
 Thus, it suffices to show that the remaining hypotheses of \Cref{theorem_uniform_posterior_contraction_implies_uniform_convergence_posterior_mean} that are stated in \Cref{remark_hypotheses} are satisfied.
 First, since $\calR\subseteq\Theta\subseteq L^2_\zeta(\calZ)$ and since $\Pi^\prime (\calR)=1$ by \Cref{assumption_Nickl_Condition_2_2_1}, we have $\Pi^\prime (L^2_\zeta(\calZ))=1$, so we may set $\mathbb{B}\leftarrow L^2_\zeta(\calZ)$ in our application of \Cref{theorem_uniform_posterior_contraction_implies_uniform_convergence_posterior_mean}.
 By combining $\Pi^\prime (L^2_\zeta(\calZ))=1$ and \cite[Proposition 2.6.9]{GinNic:2016}, it follows that the RKHS $\calH$ of $\Pi^\prime $ is continuously embedded in $L^2_\zeta(\calZ)$, and thus the boundedness of $S$ in the $\calH$ norm implies the boundedness of $S$ in both the $\calR$ norm and the $L^2_\zeta$ norm.
 Second, the conclusion \eqref{eq_Nickl_2_26} of \Cref{theorem_uniform_posterior_contraction_L2} implies that the hypothesis \eqref{eq_hypothesis_uniform_posterior_contraction_in_Banach_norm} of \Cref{theorem_uniform_posterior_contraction_implies_uniform_convergence_posterior_mean} holds, with the substitutions $C\leftarrow L^\prime(mC_U^{-1/2})^\eta$ for $\eta$ as in \Cref{assumption_local_Lipschitz_continuity_of_restricted_inverse_forward_map}, $C_1\leftarrow 4(U^2+1)$, and exponent $\mathbf{q}\leftarrow 1$.
 Third, the hypothesis that $S$ is bounded in the $\calH$ norm implies that there exists for every sufficiently large $M$ some $\mathbf{A}=\mathbf{A}(S,U(M))$ such that \eqref{eq_uniform_constant_A} holds, by \Cref{proposition_existence_of_uniform_U_and_uniform_A}. 
 Thus, all the hypotheses of \Cref{theorem_uniform_posterior_contraction_implies_uniform_convergence_posterior_mean} are satisfied, and the bound \eqref{eq_concentration_for_posterior_mean} of \Cref{theorem_uniform_posterior_contraction_implies_uniform_convergence_posterior_mean} with the substitutions stated above yields the desired conclusion.

\end{proof}

	The convergence rate $\delta_N^\eta = N^{-\eta(\alpha + \kappa)/(2\alpha + 2\kappa + d_z)}$ for the posterior mean established in \cref{corollary_uniform_L2_convergence_posterior_mean} is the same as \cite[Theorem 2.3.2]{Nic:2023}, with the difference that \eqref{eq:conc_ineq_post_mean} holds uniformly for all $\theta_0$ from the set $S$.
	The rate $\delta_N^\eta$ is also the same convergence rate established for MAP estimators in \cite[Corollary 2.10]{Siebel:2024}, which however imposes stronger, \emph{global} assumptions on $G$, cf.~\cref{conv_MAP_est} below.

Next, we use \cref{corollary_uniform_L2_convergence_posterior_mean} to prove $L^2_\zeta$ distinguishability of infimum plug-in tests based on the posterior mean.
\begin{corollary}[$L^2_\zeta$ distinguishability for the posterior mean] 
	\label{corollary_L2_distinguishability_posterior_mean}
	Suppose that the assumptions of \cref{theorem_uniform_posterior_contraction_L2} hold, for a given set $S$ and for $(\delta_N)_{N\in\N}$ as in \eqref{eq_Nickl_2_19}. Then, for $M$, $m$, $U=U(M)$, $N^*\in\N$, and $L^\prime =L^\prime (M)$ as in \Cref{theorem_uniform_posterior_contraction_L2}, the type $1$ and type $2$ errors of the plug-in tests $\Psi_N = \mathbf{1}_{T_N\,>\,t_N}$ with
	\[ T_N \coloneqq \inf_{h \in H_0} \norm{\Exp^{\Pi_N}[\theta|D_N] - h}_{L_\zeta^2(\calZ)} 
		\qquad \text{and} \qquad
		t_N \coloneqq L'(mC_U^{-1/2})^\eta\delta_N^\eta \]
	for testing $H_0\subseteq S$ against 
	\[ H_{1,N} \subseteq \left\{ \theta_0 \in S: \inf_{h \in H_0} \norm{\theta_0 - h}_{L_\zeta^2(\calZ)} \ge 2 L'(mC_U^{-1/2})^\eta \delta_N^\eta \right\} \]	
	satisfy
	\[ 	\sup_{\theta_0 \in H_0} P_{\theta_0}^N(\Psi_N = 1) \le \frac{6(U^2+1)}{N\delta_N^2}
		\qquad \text{and} \qquad
		\sup_{\theta_0 \in H_{1,N}} P_{\theta_0}^N(\Psi_N = 0) \le \frac{6(U^2+1)}{N\delta_N^2} \]
	for all $N \ge N^*$.
\end{corollary}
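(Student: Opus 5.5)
The plan is to combine \Cref{corollary_uniform_L2_convergence_posterior_mean} with \Cref{corollary_distinguishability_under_strong_uniform_stochastic_boundedness}, or more directly with \Cref{errors_plug_in_general}. We work in the metric $d(\theta_1,\theta_2)\coloneqq\norm{\theta_1-\theta_2}_{L^2_\zeta(\calZ)}$ on $S$ and take the estimators $\widehat{\theta}_N\coloneqq\Exp^{\Pi_N}[\theta\vert D_N]$. With these choices the statistic $T_N$ in the statement is exactly the infimum plug-in statistic from \eqref{eq_infimum_plug_in_tests}. We also set $K\coloneqq L^\prime(mC_U^{-1/2})^\eta$, with $M$, $m$, $U$, $N^*$ and $L^\prime$ fixed as in \Cref{theorem_uniform_posterior_contraction_L2}. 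The threshold is then $t_N=K\delta_N^\eta$, and the alternative in the statement is contained in $\{\theta_0\in S: d(\theta_0,H_0)\ge 2t_N\}$, which is exactly the form required in \eqref{eq_null_hypothesis_alternative_hypothesis}.

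Since $t_N\ge 0$ and $H_0\subseteq S$, \Cref{errors_plug_in_general} applies. It bounds both the type $1$ error $\sup_{\theta_0\in H_0}P^N_{\theta_0}(\Psi_N=1)$ and the type $2$ error $\sup_{\theta_0\in H_{1,N}}P^N_{\theta_0}(\Psi_N=0)$ by
\[ \sup_{\theta_0\in S}P^N_{\theta_0}\left(\norm{\widehat{\theta}_N-\theta_0}_{L^2_\zeta}\ge K\delta_N^\eta\right). \]
Here $P_\theta$ is read as the joint law $P^N_\theta$ of $D_N$.

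It remains to bound this supremum by \eqref{eq:conc_ineq_post_mean}. The one point needing care is that \Cref{errors_plug_in_general} uses the non-strict event $\{\ge t_N\}$, while \eqref{eq:conc_ineq_post_mean} controls the strict event $\{>t_N\}$. There are two ways to close this gap. The first is to use the definition of the test directly. The type $1$ event $\{T_N>t_N\}$ is contained in $\{d(\widehat{\theta}_N,\theta_0)>t_N\}$, so the strict bound applies immediately. For the type $2$ error, the triangle inequality gives $T_N\ge 2t_N-d(\widehat{\theta}_N,\theta_0)$, so the event $\{T_N\le t_N\}$ is contained in $\{d(\widehat{\theta}_N,\theta_0)\ge t_N\}$. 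The second way avoids treating the boundary case. The proof of \Cref{theorem_uniform_posterior_contraction_implies_uniform_convergence_posterior_mean} works through the posterior-mass event of \eqref{eq_Nickl_2_26}, and that argument actually gives a bound for the event $\{\norm{\widehat{\theta}_N-\theta_0}_{\mathbb{B}}\ge C\delta_N^\eta\}$, that is, the non-strict version. Equivalently, one can apply the result with a slightly smaller constant $m$. I expect this strict-versus-non-strict bookkeeping to be the only real obstacle; the rest is substitution.

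With the non-strict concentration bound in hand, both errors are at most $6(U^2+1)/(N\delta_N^2)$ for every $N\ge N^*$. This is the conclusion of the corollary, and it also matches \Cref{corollary_distinguishability_under_strong_uniform_stochastic_boundedness} with probability decay rate $\epsilon_N=6(U^2+1)/(N\delta_N^2)$.
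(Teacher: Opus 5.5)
Your proposal is correct and is essentially the paper's proof. The paper feeds \eqref{eq:conc_ineq_post_mean} from \Cref{corollary_uniform_L2_convergence_posterior_mean} into \Cref{corollary_distinguishability_under_strong_uniform_stochastic_boundedness}, and hence into \Cref{errors_plug_in_general}. In doing so it silently treats the strict event in \eqref{eq:conc_ineq_post_mean} as if it controlled the non-strict event in \eqref{eq_strong_uniform_stochastic_boundedness}, so your bookkeeping addresses a point the paper glosses over.

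Your ``first way'' only handles the type~$1$ error, since the type~$2$ event still lands in $\{d(\widehat{\theta}_N,\theta_0)\ge t_N\}$. It is the smaller-$m$ device that closes the gap for both errors. The requirement that $m$ be large enough is an open condition, and the bad events in \eqref{eq_Nickl_2_26} and \eqref{eq:conc_ineq_post_mean} shrink as $m$ grows. So you can take $N^*$ from an admissible $m_1<m$, and this gives $\sup_{\theta_0\in S}P^N_{\theta_0}(d(\widehat{\theta}_N,\theta_0)\ge t_N)\le 6(U^2+1)/(N\delta_N^2)$ for all $N\ge N^*$.
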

Under an additional assumption on $\zeta$, we can obtain $L^2$ distinguishability for the posterior mean; see \Cref{remark_uniform_L2_convergence_posterior_mean}.
\begin{proof}[Proof of \Cref{corollary_L2_distinguishability_posterior_mean}]
	Given the hypotheses,  \cref{corollary_uniform_L2_convergence_posterior_mean} holds, and the conclusion \eqref{eq:conc_ineq_post_mean} of \cref{corollary_uniform_L2_convergence_posterior_mean} implies that \eqref{eq_strong_uniform_stochastic_boundedness} holds with the metric $d$, the estimators $(\widehat{\theta}_N)_{N\in\N}$, the distinguishability rate $(\delta_N)_{N\in\N}$, the probability decay rate $(\epsilon_N)_{N\in\N}$, and the scalars $K>0$ and $N_0\in\N$ in \eqref{eq_strong_uniform_stochastic_boundedness} replaced with the $L^2_\zeta$ metric, the posterior means $(\Exp^{\Pi_N}[\theta|D_N])_{N\in\N}$, the modified contraction rate $(\delta_N^\eta)_{N\in\N}$, the probability decay rate $(6(U^2+1)/(N\delta_N^2))_{N\in\N}$, and the constants $L'(mC_U^{-1/2})^\eta$ and $N^*$ from \Cref{corollary_uniform_L2_convergence_posterior_mean} respectively.
	The conclusion follows by applying \cref{corollary_distinguishability_under_strong_uniform_stochastic_boundedness} with the above-mentioned replacements.
\end{proof}

\subsection{Uniform posterior contraction and distinguishability in Sobolev and supremum norms}
\label{subsection_uniform_posterior_contraction_Linfty}

In this section, we show that under stronger assumptions than those in \Cref{subsection_uniform_posterior_contraction_L2}, we can use \Cref{theorem_uniform_posterior_contraction_L2}, which yields uniform posterior contraction in $L^2_\zeta$, to obtain uniform posterior contraction in the standard Sobolev norms for a suitable range of Sobolev index, and thus also in the supremum norm; see \Cref{theorem_uniform_posterior_contraction_Linfty} below.
Combining \Cref{theorem_uniform_posterior_contraction_Linfty} with \Cref{theorem_uniform_posterior_contraction_implies_uniform_convergence_posterior_mean} yields uniform convergence of the posterior mean in the same class of norms; see \Cref{corollary_uniform_Sobolev_convergence_posterior_mean}.
In \Cref{corollary_Linfty_distinguishability_posterior_mean}, we combine \Cref{corollary_uniform_Sobolev_convergence_posterior_mean} with \cref{corollary_distinguishability_under_strong_uniform_stochastic_boundedness} to obtain distinguishability in the same class of norms.

For the following result, recall that $\calZ\subset \R^{d_z}$ is a bounded, smooth domain; see \Cref{definition_rescaled_Gaussian_priors}.
\begin{theorem}[Uniform Sobolev and supremum norm posterior contraction]
 \label{theorem_uniform_posterior_contraction_Linfty}
Suppose that all the hypotheses of \Cref{theorem_uniform_posterior_contraction_L2} hold for a set $S$ of candidate truths $\theta_0$ and for $\calR$ that is continuously embedded in $H^\beta(\calZ)$ for some $\beta>d_z/2$, and let $b$, $M$, $m$, $N^*$, and $U$ be such that \eqref{eq_Nickl_2_26} holds. 
Let $\zeta$ be a probability measure on $\calZ$ with a Lebesgue density $p_\zeta$ that has strictly positive essential infimum.
Then for the same choice of $b$ and $M$, for $\delta_N$ as in \eqref{eq_Nickl_2_19}, for $\eta$ as in \Cref{assumption_local_Lipschitz_continuity_of_restricted_inverse_forward_map}, and for any fixed $\beta'\in[0,\beta)$, there exists some $0<C<\infty$ such that the following holds for every $N\geq N^*$ and $\theta_0\in S$:
\begin{equation}
 \label{eq_Sobolev_posterior_contraction_uniform_version}
 P^N_{\theta_0}\left( \Pi_N(\{\theta\in\Theta:\norm{\theta}_{\calR}\leq M,\norm{\theta-\theta_0}_{H^{\beta^\prime}}\leq C \delta_N^{\eta(\beta-\beta')/\beta}\} \vert D_N)\leq 1-e^{-bN\delta_N^2}\right)\leq \frac{4(U^2+1)}{N\delta_N^2}.
\end{equation}
In particular, for any fixed $\beta^\prime \in (d_z/2,\beta)$, there exists some $0<C^\prime<\infty$, such that the following holds for every $N\geq N^*$ and $\theta_0\in S$:
\begin{equation}
 \label{eq_Linfty_posterior_contraction_uniform_version}
 P^N_{\theta_0}\left( \Pi_N(\{\theta\in\Theta:\norm{\theta}_{\calR}\leq M,\norm{\theta-\theta_0}_\infty\leq C^\prime \delta_N^{\eta(\beta-\beta')/\beta} \}\vert D_N)\leq 1-e^{-bN\delta_N^2}\right)\leq \frac{4(U^2+1)}{N\delta_N^2}.
\end{equation}
\end{theorem}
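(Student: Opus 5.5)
The plan is to reduce the Sobolev statement to the $L^2_\zeta$ contraction \eqref{eq_Nickl_2_26} of \Cref{theorem_uniform_posterior_contraction_L2} by an interpolation inequality, and then to obtain \eqref{eq_Linfty_posterior_contraction_uniform_version} from the Sobolev embedding $H^{\beta'}(\calZ)\hookrightarrow C(\overline{\calZ})$ for $\beta'>d_z/2$. The key observation is a set inclusion that holds for every $\theta_0\in S$. Since $\Theta_N(M)\subseteq B_\calR(M)$, the set in \eqref{eq_Nickl_2_26} is contained in the set appearing in \eqref{eq_Sobolev_posterior_contraction_uniform_version}. Hence the event that the posterior mass of the latter is at most $1-e^{-bN\delta_N^2}$ is contained in the corresponding event for the former. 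The probability bound $4(U^2+1)/(N\delta_N^2)$ then transfers verbatim, with the same $N^*$, $b$ and $M$.

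To establish the inclusion, take $\theta\in\Theta_N(M)$ with $\norm{\theta-\theta_0}_{L^2_\zeta}\leq L'(mC_U^{-1/2}\delta_N)^\eta$.

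First, pass from $L^2_\zeta$ to Lebesgue $L^2$. Since $\essinf p_\zeta>0$, we have $\norm{f}_{L^2}\leq (\essinf p_\zeta)^{-1/2}\norm{f}_{L^2_\zeta}$; this is the direction used in \Cref{remark_uniform_L2_convergence_posterior_mean}, via \Cref{lemma_embedding}.

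Second, bound the difference in $H^\beta$ uniformly. Since $\calR\hookrightarrow H^\beta$, we get $\norm{\theta-\theta_0}_{H^\beta}\leq c_\calR(\norm{\theta}_\calR+\norm{\theta_0}_\calR)\leq 2c_\calR M$, using $M>\sup_{\theta_0\in S}\norm{\theta_0}_\calR$. This uniformity in $\theta_0$ is exactly where boundedness of $S$ in $\calR$ enters.

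Third, apply the standard Sobolev interpolation inequality on the bounded smooth domain $\calZ$:
\[ \norm{f}_{H^{\beta'}}\leq c_{\mathrm{int}}\norm{f}_{L^2}^{(\beta-\beta')/\beta}\norm{f}_{H^\beta}^{\beta'/\beta},\qquad 0\leq\beta'<\beta, \]
which holds for domains with an extension operator, for instance via interpolation spaces $[L^2,H^\beta]_{\beta'/\beta}=H^{\beta'}$. Applied to $f=\theta-\theta_0$, it gives
\[ \norm{\theta-\theta_0}_{H^{\beta'}}\leq C\delta_N^{\eta(\beta-\beta')/\beta}, \]
with
\[ C\coloneqq c_{\mathrm{int}}\bigl((\essinf p_\zeta)^{-1/2}L'(mC_U^{-1/2})^\eta\bigr)^{(\beta-\beta')/\beta}(2c_\calR M)^{\beta'/\beta}, \]
which is independent of $N$ and of $\theta_0\in S$. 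The case $\beta'=0$ is just the $L^2$ comparison from the first step.

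For the supremum norm statement, fix $\beta'\in(d_z/2,\beta)$. The Sobolev embedding gives $\norm{f}_\infty\leq c_{\mathrm{emb}}\norm{f}_{H^{\beta'}}$, so the same inclusion argument works with $C'\coloneqq c_{\mathrm{emb}}C$.

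The only genuinely delicate point is justifying the interpolation inequality with constants that do not depend on $\theta_0$, and making sure that the $H^\beta$ bound is taken on the difference $\theta-\theta_0$ rather than on $\theta$ alone. Both are handled by the boundedness of $\calR$ and $S$ noted above. Everything else is monotonicity of the posterior-mass event in the set.
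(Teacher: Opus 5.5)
Your proposal is correct and follows essentially the same route as the paper's proof. The paper likewise combines the interpolation inequality $\norm{u}_{H^{\beta'}}\lesssim\norm{u}_{L^2}^{(\beta-\beta')/\beta}\norm{u}_{H^\beta}^{\beta'/\beta}$, the comparison $L^2_\zeta\hookrightarrow L^2$ from $\essinf p_\zeta>0$, and the uniform bound $\norm{\theta-\theta_0}_{H^\beta}\lesssim 2M$ on $B_\calR(M)\times S$; it then uses the set inclusion and monotonicity of the posterior-mass event to transfer \eqref{eq_Nickl_2_26}, and the Sobolev embedding for the supremum norm. Your explicit constant $C$, with the factor $(\essinf p_\zeta)^{-1/2}$, simply makes precise what the paper leaves inside $\lesssim$.
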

For the proof of \Cref{theorem_uniform_posterior_contraction_Linfty}, see \Cref{sec_proof_uniform_posterior_contraction_distinguishability_Sobolev_Linfty}.

For a given $\beta > d_z/2$, the posterior contraction rate $\delta_N^{\eta(\beta - \beta')/\beta}$ in the supremum norm stated in \eqref{eq_Linfty_posterior_contraction_uniform_version} is constrained by the lower bound $\beta^\prime > d_z/2$, whereas the posterior contraction rate $\delta_N^{\eta(\beta - \beta')/\beta}$ in the $H^{\beta^\prime}$ norm can be increased by choosing $\beta^\prime \in [0,d_z/2]$, at the cost of posterior contraction in a weaker norm.
In comparison with \cite[Proposition 4.1.3]{Nic:2023}, the statement of \cref{theorem_uniform_posterior_contraction_Linfty} is not limited to the case where $\zeta$ is the uniform distribution.

The next result is analogous to \Cref{corollary_uniform_L2_convergence_posterior_mean}. Recall the condition \eqref{eq_strong_uniform_stochastic_boundedness} from \Cref{definition_uniform_stochastic_boundedness} of strong uniform stochastic boundedness.
\begin{corollary}[Uniform Sobolev and supremum norm convergence of posterior mean]
 \label{corollary_uniform_Sobolev_convergence_posterior_mean}
 Suppose all the hypotheses of \Cref{theorem_uniform_posterior_contraction_Linfty} hold with some scalars $\beta$ and $\eta$ and some set $S$ of candidate truths $\theta_0$. Then the following statements hold:
\begin{enumerate}
 \item \label{item_Sobolev_convergence_posterior_mean} For any $\beta'\in (0,\beta)$, $C > 0$ can be chosen large enough such that for some $0<C_1<\infty$ and $N_0 \in \N$, and for all $N \ge N_0$,
 \begin{equation*}
	\sup_{\theta_0\in S} P^N_{\theta_0} \left(\norm{\Exp^{\Pi_N}[\theta\vert D_N]-\theta_0}_{H^{\beta^\prime}} > C\delta_N^{\eta(\beta - \beta')/\beta}\right) \leq \frac{C_1}{N\delta_N^2},
 \end{equation*}
 i.e. $(\norm{\Exp^{\Pi_N}[\theta \vert D_N] - \theta_0}_{H^{\beta^\prime}})_{N\in\N}$ is strongly uniformly stochastically bounded with respect to the rate $(\delta_N^{\eta(\beta-\beta^\prime)/\beta})_{N\in\N}$ over $\theta_0\in S$ and with probability decay rate proportional to $(1/N\delta_N^2)_{N\in\N}$.
 \item \label{item_Linfty_convergence_posterior_mean} For any $\beta'\in (d_z/2,\beta)$, $C^\prime > 0$ can be chosen large enough such that for some $0<C_2<\infty$ and $N_0 \in \N$, and for all $N \ge N_0$,
 \begin{equation*}
	\sup_{\theta_0\in S} P^N_{\theta_0} \left(\norm{\Exp^{\Pi_N}[\theta\vert D_N]-\theta_0}_{\infty} > C^\prime\delta_N^{\eta(\beta - \beta')/\beta}\right) \leq \frac{C_2}{N\delta_N^2},
 \end{equation*}
i.e. $(\norm{\Exp^{\Pi_N}[\theta \vert D_N] - \theta_0}_\infty)_{N\in\N}$ is strongly uniformly stochastically bounded with respect to the rate $(\delta_N^{\eta(\beta-\beta^\prime)/\beta})_{N\in\N}$ over $\theta_0\in S$, with probability decay rate proportional to $(1/N\delta_N^2)_{N\in\N}$.
\end{enumerate}
\end{corollary}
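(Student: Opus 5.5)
The plan is to obtain both statements by applying \Cref{theorem_uniform_posterior_contraction_implies_uniform_convergence_posterior_mean} twice. For \ref{item_Sobolev_convergence_posterior_mean} we take $\mathbb{B}\leftarrow H^{\beta'}(\calZ)$, and for \ref{item_Linfty_convergence_posterior_mean} we take $\mathbb{B}\leftarrow C(\overline{\calZ})$ with the supremum norm. In each case the exponent $\eta$ in \eqref{eq_hypothesis_uniform_posterior_contraction_in_Banach_norm} is replaced by $\eta(\beta-\beta')/\beta$. We use \eqref{eq_Sobolev_posterior_contraction_uniform_version} and \eqref{eq_Linfty_posterior_contraction_uniform_version} of \Cref{theorem_uniform_posterior_contraction_Linfty} to supply the contraction hypothesis \eqref{eq_hypothesis_uniform_posterior_contraction_in_Banach_norm}, with $C_1\leftarrow 4(U^2+1)$ and $\mathbf{q}\leftarrow 1$. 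The conclusion \eqref{eq_concentration_for_posterior_mean} then gives the bound with $C_1\leftarrow 6(U^2+1)$ (and similarly $C_2$) and $N_0\leftarrow N^*$.

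By \Cref{remark_hypotheses}, it remains to check the remaining hypotheses of \Cref{theorem_uniform_posterior_contraction_implies_uniform_convergence_posterior_mean}, in this order.
\begin{enumerate}
\item \emph{$\mathbb{B}$ is separable and $\Pi'(\mathbb{B})=1$.} The space $H^{\beta'}(\calZ)$ is a separable Hilbert space. Since $\Pi'(\calR)=1$ and $\calR\hookrightarrow H^\beta\hookrightarrow H^{\beta'}$, we get $\Pi'(H^{\beta'})=1$. For the sup norm, the Sobolev embedding $H^{\beta}\hookrightarrow C(\overline{\calZ})$ for $\beta>d_z/2$ on the bounded smooth domain gives $\Pi'(C(\overline{\calZ}))=1$, and $C(\overline{\calZ})$ is separable.
\item \emph{$S$ is bounded in the $\mathbb{B}$ and $\calR$ norms.} Since $S$ is bounded in $\calH$, \Cref{proposition_existence_of_uniform_U_and_uniform_A} gives boundedness in $\calR$. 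The continuous embeddings $\calR\hookrightarrow H^\beta\hookrightarrow H^{\beta'}\hookrightarrow C(\overline{\calZ})$ (the last for $\beta'>d_z/2$) then give boundedness in $\mathbb{B}$. We also have $S\subset\mathbb{B}\cap\calR$.
\item \emph{The uniform prior mass bound \eqref{eq_uniform_constant_A}.} This follows from \Cref{proposition_existence_of_uniform_U_and_uniform_A}, with $U=U(M)$ for the $M$ of \Cref{theorem_uniform_posterior_contraction_L2}. That $M$ exceeds $\sup_{\theta_0\in S}\norm{\theta_0}_\calR$, so we can write $M=M_0+\sup_{S}\norm{\theta_0}_\calR$ with $M_0>0$.
\end{enumerate}

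The main obstacle is a careful consistency check rather than anything deep. \Cref{theorem_uniform_posterior_contraction_implies_uniform_convergence_posterior_mean} asks for \eqref{eq_hypothesis_uniform_posterior_contraction_in_Banach_norm} ``for sufficiently large $b$''. We take $b$ large at the outset in \Cref{theorem_uniform_posterior_contraction_L2}, and \Cref{theorem_uniform_posterior_contraction_Linfty} keeps the same $b$, $M$, $U$ and $N^*$. So the $U$ in \eqref{eq_uniform_constant_A} matches the one in the contraction bound, and the probability decay rate $1/(N\delta_N^2)$ carries over unchanged.

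For the case $\beta'\in(0,\beta)$ in \ref{item_Sobolev_convergence_posterior_mean}, we have $H^{\beta'}\supseteq H^\beta$ and the embedding is continuous. The constant $C$ is the one from \eqref{eq_Sobolev_posterior_contraction_uniform_version}, possibly enlarged. For \ref{item_Linfty_convergence_posterior_mean} we may alternatively deduce the result from \ref{item_Sobolev_convergence_posterior_mean} directly: for $\beta'>d_z/2$, the embedding constant $c_{\mathrm{emb}}$ of $H^{\beta'}\hookrightarrow C(\overline{\calZ})$ gives $\norm{f}_\infty\le c_{\mathrm{emb}}\norm{f}_{H^{\beta'}}$. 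Taking $C'=c_{\mathrm{emb}}C$ and $C_2=C_1$ then finishes the proof without a second application of \Cref{theorem_uniform_posterior_contraction_implies_uniform_convergence_posterior_mean}.
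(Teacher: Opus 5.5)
Your proposal is correct. For (i), and for your primary route to (ii), it follows the paper's argument: apply \Cref{theorem_uniform_posterior_contraction_implies_uniform_convergence_posterior_mean} with $\mathbb{B}\leftarrow H^{\beta'}(\calZ)$, respectively a sup-norm space. You feed in \eqref{eq_Sobolev_posterior_contraction_uniform_version}, respectively \eqref{eq_Linfty_posterior_contraction_uniform_version}, with $\mathbf{q}\leftarrow 1$, and take \eqref{eq_uniform_constant_A} from \Cref{proposition_existence_of_uniform_U_and_uniform_A}. Three points are worth noting.

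\emph{Choice of sup-norm space.} The paper takes $\mathbb{B}\leftarrow L^\infty(\calZ)$, which is not separable, although \Cref{theorem_uniform_posterior_contraction_implies_uniform_convergence_posterior_mean} is stated for a separable Banach space. Your $C(\overline{\calZ})$ matches that hypothesis, and the prior is in any case concentrated there.

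\emph{Alternative route to (ii).} Combining (i) with $\norm{f}_\infty\le c_{\mathrm{emb}}\norm{f}_{H^{\beta'}}$ for $\beta'>d_z/2$ is genuinely shorter. It needs no second application of the theorem. It also avoids checking $\Pi'(\mathbb{B})=1$ and the boundedness of $S$ in a sup-norm space. This mirrors how the proof of \Cref{theorem_uniform_posterior_contraction_Linfty} derives \eqref{eq_Linfty_posterior_contraction_uniform_version} from \eqref{eq_Sobolev_posterior_contraction_uniform_version}. The paper's route gains nothing over it.

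\emph{Order of constants.} The theorem needs $b>\mathbf{A}+2$, but $\mathbf{A}=\mathbf{A}(S,U(M))$ and $M$ is chosen after $b$, so your consistency check should fix the order explicitly. Do it as in the proof of \Cref{theorem_uniform_posterior_contraction_L2}. First fix $\mathbf{A}$ at a baseline radius, then choose $b>\mathbf{A}+2$, then enlarge $M$. This works because $\mathcal{B}_N(\theta_0,\mathcal{U})$ in \eqref{eq_Nickl_1_23} is increasing in $\mathcal{U}$, so \eqref{eq_uniform_constant_A} persists with the same $\mathbf{A}$.
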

\begin{proof}[Proof of \Cref{corollary_uniform_Sobolev_convergence_posterior_mean}]
The proof is analogous to the proof of \Cref{corollary_uniform_L2_convergence_posterior_mean} and consists in verifying that the hypotheses of \Cref{theorem_uniform_posterior_contraction_implies_uniform_convergence_posterior_mean} hold, for the Sobolev norm case and the supremum norm case.
The hypotheses of \Cref{theorem_uniform_posterior_contraction_Linfty} are stronger than the hypotheses of \Cref{theorem_uniform_posterior_contraction_L2}, because in \Cref{theorem_uniform_posterior_contraction_Linfty} we also require that $\calR$ is continuously embedded in $H^\beta(\calZ)$ for some $\beta>d_z/2$.
In particular, the hypothesis in \Cref{theorem_uniform_posterior_contraction_L2} that $S\subset \calR\cap\calH$ is bounded in the $\calH$ norm represents a stronger condition on $S$ when it is implicitly stated as a hypothesis of \Cref{theorem_uniform_posterior_contraction_Linfty}, due to the additional hypothesis that $\calR$ is continuously embedded in $H^\beta(\calZ)$ for some $\beta>d_z/2$.
By \Cref{proposition_existence_of_uniform_U_and_uniform_A}, the hypothesis that $S$ is bounded in the $\calH$ norm implies that the uniform prior mass lower bound condition \eqref{eq_uniform_constant_A} of \Cref{theorem_uniform_posterior_contraction_implies_uniform_convergence_posterior_mean} is satisfied.
It thus suffices to verify the remaining two hypotheses of \Cref{theorem_uniform_posterior_contraction_implies_uniform_convergence_posterior_mean}; cf. \Cref{remark_hypotheses}.

\ref{item_Sobolev_convergence_posterior_mean}: The hypothesis in \Cref{theorem_uniform_posterior_contraction_Linfty} that $\calR$ is continuously embedded in $H^\beta(\calZ)$ for $\beta>d_z/2$ implies that $\calR$ is continuously embedded in $H^{\beta^\prime}(\calZ)$ for any $\beta'<\beta$.
Thus, if $\mathbb{B}\leftarrow H^{\beta^\prime}(\calZ)$, then the hypothesis of \Cref{theorem_uniform_posterior_contraction_implies_uniform_convergence_posterior_mean} that $\Pi^\prime(\mathbb{B})=1$ and that $S\subset \mathbb{B}\cap\calR$ is bounded in both the $\mathbb{B}$ norm and the $\calR$ norm is satisfied, given that $S$ is assumed to be bounded in the $\calH$ norm, cf. \Cref{proposition_existence_of_uniform_U_and_uniform_A}.
Given that all the hypotheses of \Cref{theorem_uniform_posterior_contraction_Linfty} hold, we may apply the conclusion \eqref{eq_Sobolev_posterior_contraction_uniform_version} to ensure that the hypothesis \eqref{eq_hypothesis_uniform_posterior_contraction_in_Banach_norm} of \Cref{theorem_uniform_posterior_contraction_implies_uniform_convergence_posterior_mean} holds, with the substitutions $\mathbb{B}\leftarrow H^{\beta^\prime}(\calZ)$, $C$ in \eqref{eq_hypothesis_uniform_posterior_contraction_in_Banach_norm} substituted with the constant $C$ in \eqref{eq_Sobolev_posterior_contraction_uniform_version}, $\eta$ in \eqref{eq_hypothesis_uniform_posterior_contraction_in_Banach_norm} substituted with the exponent $\eta(\beta-\beta')/\beta$ in \eqref{eq_Sobolev_posterior_contraction_uniform_version}, and $\mathbf{q}\leftarrow 1$.
Thus, the remaining two hypotheses of \Cref{theorem_uniform_posterior_contraction_implies_uniform_convergence_posterior_mean} are satisfied, and by the conclusion \eqref{eq_concentration_for_posterior_mean} of \Cref{theorem_uniform_posterior_contraction_implies_uniform_convergence_posterior_mean} with the substitutions stated above, we obtain the desired conclusion for the Sobolev norm case.

\ref{item_Linfty_convergence_posterior_mean}: As shown in the proof of \ref{item_Sobolev_convergence_posterior_mean} above, $\Pi^\prime(H^{\beta^\prime}(\calZ))=1$ and $S$ is bounded in both the $H^{\beta^\prime}$ and $\calR$ norm, for any $\beta^\prime<\beta$.  
By the Sobolev embedding $\norm{\cdot}_\infty \lesssim \norm{\cdot}_{H^\beta}$, see e.g. \eqref{eq_Nickl_A_4}, it follows that $\Pi^\prime(L^\infty)=1$ and that $S$ is bounded in the supremum norm and $\calR$ norm. 
Given that all the hypotheses of \Cref{theorem_uniform_posterior_contraction_Linfty} hold, we may apply \eqref{eq_Linfty_posterior_contraction_uniform_version} to conclude that the hypothesis \eqref{eq_hypothesis_uniform_posterior_contraction_in_Banach_norm} of \Cref{theorem_uniform_posterior_contraction_implies_uniform_convergence_posterior_mean} holds, with the substitutions $\mathbb{B}\leftarrow L^\infty(\calZ)$, $C$ in \eqref{eq_hypothesis_uniform_posterior_contraction_in_Banach_norm} substituted with the constant $C^\prime$ in \eqref{eq_Linfty_posterior_contraction_uniform_version}, $\eta$ in \eqref{eq_hypothesis_uniform_posterior_contraction_in_Banach_norm} substituted with the exponent $\eta(\beta-\beta')/\beta$ in \eqref{eq_Linfty_posterior_contraction_uniform_version}, and $\mathbf{q}\leftarrow 1$. 
Thus, the remaining two hypotheses of \Cref{theorem_uniform_posterior_contraction_implies_uniform_convergence_posterior_mean} are satisfied, and by the conclusion \eqref{eq_concentration_for_posterior_mean} of \Cref{theorem_uniform_posterior_contraction_implies_uniform_convergence_posterior_mean} with the substitutions stated above, we obtain the desired conclusion for the supremum norm case.
\end{proof}

We now combine \Cref{corollary_distinguishability_under_strong_uniform_stochastic_boundedness} and \Cref{corollary_uniform_Sobolev_convergence_posterior_mean} to obtain distinguishability in Sobolev norms and the supremum norm.

\begin{corollary}[Sobolev and supremum norm distinguishability of the posterior mean]
	\label{corollary_Linfty_distinguishability_posterior_mean}
	Suppose that the assumptions of \cref{theorem_uniform_posterior_contraction_Linfty} hold with $\beta$ and $\eta$.
	Let $\delta_N$ be as in \eqref{eq_Nickl_2_19} for all $N \in \N$. Moreover, suppose that either 
	\begin{enumerate}   
	 \item \label{item_Sobolev_distinguishability_posterior_mean} $\mathbb{B} = H^{\beta^\prime}(\calZ)$ for an arbitrary $\beta^\prime \in [0,\beta)$, or
	 \item \label{item_Linfty_distinguishability_posterior_mean} $\mathbb{B} = L^\infty(\calZ)$ and $\beta^\prime \in (d_z/2, \beta)$.
	\end{enumerate}
	Then, $K, K^\prime > 0$ can be chosen large enough such that the type $1$ and type $2$ errors of the plug-in tests $\Psi_N = \mathbf{1}_{T_N\,>\,t_N}$ with
	\[ T_N \coloneqq \inf_{h \in H_0} \norm{\Exp^{\Pi_N}[\theta|D_N] - h}_\mathbb{B}
		\qquad \text{and} \qquad
		t_N \coloneqq K\delta_N^{\eta(\beta - \beta')/\beta} \]
	for testing $H_0\subseteq S$ against
		\[ H_{1,N} \subseteq \left\{ \theta_0 \in S: \inf_{h \in H_0} \norm{\theta_0 - h}_\mathbb{B} \ge 2K\delta_N^{\eta(\beta - \beta')/\beta} \right\} \]
	satisfy
	\[ 	\sup_{\theta_0 \in H_0} P_{\theta_0}^N(\Psi_N = 1) \le \frac{K^\prime}{N\delta_N^2}
		\qquad \text{and} \qquad
		\sup_{\theta_0 \in H_{1,N}} P_{\theta_0}^N(\Psi_N = 0) \le \frac{K^\prime}{N\delta_N^2} \]
	for all $N \ge N_0$, where $N_0 \in \N$ is chosen as in \cref{corollary_uniform_Sobolev_convergence_posterior_mean}.
\end{corollary}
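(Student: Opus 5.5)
The plan is to mirror the proof of \Cref{corollary_L2_distinguishability_posterior_mean}: extract a strong uniform stochastic boundedness statement from \Cref{corollary_uniform_Sobolev_convergence_posterior_mean}, then feed it into \Cref{corollary_distinguishability_under_strong_uniform_stochastic_boundedness}. We treat the two cases separately.

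\emph{Case \ref{item_Sobolev_distinguishability_posterior_mean}.} Let $\mathbb{B}=H^{\beta'}(\calZ)$ with $\beta'\in(0,\beta)$. Statement \ref{item_Sobolev_convergence_posterior_mean} of \Cref{corollary_uniform_Sobolev_convergence_posterior_mean} provides $C$, $C_1$ and $N_0$ such that
\[ \sup_{\theta_0\in S}P^N_{\theta_0}\bigl(\norm{\Exp^{\Pi_N}[\theta\vert D_N]-\theta_0}_{H^{\beta'}}>C\delta_N^{\eta(\beta-\beta')/\beta}\bigr)\le C_1/(N\delta_N^2) \]
for all $N\ge N_0$.

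\emph{The endpoint $\beta'=0$.} This value is allowed in the present statement but not in \Cref{corollary_uniform_Sobolev_convergence_posterior_mean}\ref{item_Sobolev_convergence_posterior_mean}, so it needs a separate argument. Since $H^0=L^2$, we use \eqref{eq_Sobolev_posterior_contraction_uniform_version} of \Cref{theorem_uniform_posterior_contraction_Linfty} with $\beta'=0$, which is permitted there, together with \Cref{theorem_uniform_posterior_contraction_implies_uniform_convergence_posterior_mean} for $\mathbb{B}\leftarrow L^2(\calZ)$. The verification of the hypotheses listed in \Cref{remark_hypotheses} is the same as in the proof of \Cref{corollary_uniform_Sobolev_convergence_posterior_mean}: $\Pi'(L^2)=1$ because $\calR\hookrightarrow H^\beta\hookrightarrow L^2$, $S$ is bounded in the $L^2$ and $\calR$ norms, and \eqref{eq_uniform_constant_A} holds by \Cref{proposition_existence_of_uniform_U_and_uniform_A}. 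This gives the same type of bound with some $C_1$.

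Alternatively, one could apply \Cref{remark_uniform_L2_convergence_posterior_mean}, which gives the rate $\delta_N^\eta$ and matches the exponent $\eta(\beta-0)/\beta=\eta$. The positivity of $\essinf p_\zeta$ required there is among the hypotheses of \Cref{theorem_uniform_posterior_contraction_Linfty}.

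\emph{Case \ref{item_Linfty_distinguishability_posterior_mean}.} Let $\mathbb{B}=L^\infty(\calZ)$ with $\beta'\in(d_z/2,\beta)$. Statement \ref{item_Linfty_convergence_posterior_mean} of \Cref{corollary_uniform_Sobolev_convergence_posterior_mean} provides $C'$ and $C_2$ giving the analogous bound in the supremum norm.

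\emph{Applying the abstract result.} In either case we obtain \eqref{eq_strong_uniform_stochastic_boundedness} with metric $d$ induced by $\norm{\cdot}_{\mathbb{B}}$, estimators $\widehat\theta_N=\Exp^{\Pi_N}[\theta\vert D_N]$, rate $\delta_N\leftarrow\delta_N^{\eta(\beta-\beta')/\beta}$ and probability decay rate $\epsilon_N\leftarrow K'/(N\delta_N^2)$. We take $K\leftarrow C$ or $C'$ and $K'\leftarrow C_1$ or $C_2$ according to the case. Enlarging $K$ only shrinks the event in \eqref{eq_strong_uniform_stochastic_boundedness}, and enlarging $K'$ only weakens the bound, so ``large enough'' is harmless.

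Applying \Cref{corollary_distinguishability_under_strong_uniform_stochastic_boundedness} with $t_N=K\delta_N^{\eta(\beta-\beta')/\beta}$ then yields \eqref{eq_type1_type2_errors_bounds_strong_uniform_stochastic_boundedness}. These are exactly the claimed bounds on the type $1$ and type $2$ errors, for $H_{1,N}$ separated from $H_0$ by $2K\delta_N^{\eta(\beta-\beta')/\beta}$ in $\mathbb{B}$, for all $N\ge N_0$.

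\emph{Main obstacle.} The only nonroutine point is the endpoint $\beta'=0$ just described; everything else is substitution of constants. A minor further point is that \Cref{corollary_distinguishability_under_strong_uniform_stochastic_boundedness} uses $\ge$ in the event while \Cref{corollary_uniform_Sobolev_convergence_posterior_mean} uses $>$. We resolve this by enlarging $K$ slightly, for instance replacing it by $2K$, so that the event $\{d\ge 2K\delta_N^{\eta(\beta-\beta')/\beta}\}$ is contained in $\{d>K\delta_N^{\eta(\beta-\beta')/\beta}\}$.
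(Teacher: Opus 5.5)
Your proposal is correct and follows the paper's proof: take the strong uniform stochastic boundedness from \Cref{corollary_uniform_Sobolev_convergence_posterior_mean} and feed it into \Cref{corollary_distinguishability_under_strong_uniform_stochastic_boundedness}, with $K\leftarrow C$ or $C^\prime$ and $K^\prime\leftarrow C_1$ or $C_2$. Two of your steps fix points that the paper's proof passes over silently, and both fixes are valid: your separate treatment of the endpoint $\beta^\prime=0$, which \Cref{corollary_uniform_Sobolev_convergence_posterior_mean} formally excludes, and your doubling of $K$ to turn the strict inequality $>$ into the non-strict $\ge$ used in \eqref{eq_strong_uniform_stochastic_boundedness}.
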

\begin{proof}[Proof of \Cref{corollary_Linfty_distinguishability_posterior_mean}]
	\ref{item_Sobolev_distinguishability_posterior_mean}: Suppose $\mathbb{B}= H^{\beta^\prime}(\calZ)$. 
	Given the hypotheses, \cref{corollary_uniform_Sobolev_convergence_posterior_mean} holds. 
	Thus, we have strong uniform stochastic boundedness, with the metric $d$, the estimators $(\widehat{\theta}_N)_{N\in\N}$, the distinguishability rate $(\delta_N)_{N\in\N}$, the probability decay rate $(\epsilon_N)_{N\in\N}$, and the scalars $K>0$ and $N_0\in\N$ in \eqref{eq_strong_uniform_stochastic_boundedness} replaced by the $H^{\beta^\prime}$ metric, the posterior means $(\Exp^{\Pi_N}[\theta|D_N])_{N\in\N}$, the modified contraction rate $(\delta_N^{\eta(\beta-\beta^\prime)/\beta})_{N\in\N}$, the probability decay rate $(C_1/(N\delta_N^2))_{N\in\N}$, and the constants $C$ and $N_0$ from statement \ref{item_Sobolev_convergence_posterior_mean} of \Cref{corollary_uniform_Sobolev_convergence_posterior_mean} respectively.
	Applying \cref{corollary_distinguishability_under_strong_uniform_stochastic_boundedness} with these replacements yields the desired conclusion for $\mathbb{B}= H^{\beta^\prime}(\calZ)$, with $K\leftarrow C$ and $K^\prime\leftarrow C_1$.
	
	\ref{item_Linfty_distinguishability_posterior_mean}: Suppose $\mathbb{B}= L^\infty(\calZ)$. 	Given the hypotheses, \cref{corollary_uniform_Sobolev_convergence_posterior_mean} holds. 
	Thus, we have strong uniform stochastic boundedness, with the metric $d$, the estimators $(\widehat{\theta}_N)_{N\in\N}$, the distinguishability rate $(\delta_N)_{N\in\N}$, the probability decay rate $(\epsilon_N)_{N\in\N}$, and the scalars $K>0$ and $N_0\in\N$ in \eqref{eq_strong_uniform_stochastic_boundedness} replaced with the metric induced by the supremum norm, the posterior means $(\Exp^{\Pi_N}[\theta|D_N])_{N\in\N}$, the modified contraction rate $(\delta_N^{\eta(\beta-\beta^\prime)/\beta})_{N\in\N}$, the probability decay rate $(C_2/(N\delta_N^2))_{N\in\N}$, and the constants $C^\prime$ and $N_0$ from statement \ref{item_Linfty_convergence_posterior_mean} of \Cref{corollary_uniform_Sobolev_convergence_posterior_mean} respectively.
	Applying \cref{corollary_distinguishability_under_strong_uniform_stochastic_boundedness} with these replacements yields the desired conclusion for $\mathbb{B}= L^\infty(\calZ)$, with $K\leftarrow C$ and $K^\prime\leftarrow C_2$.
\end{proof}

\subsection{Distinguishability in {\footnotesize $L^2$} for tests based on MAP estimators}
\label{sec:map_estimator}

Recall that in \eqref{eq:inverse_problem}, $G$ is a nonlinear mapping from $\Theta$ to $L^2_\lambda(\X,V)$, where $\Theta\subseteq L^2_\zeta(\calZ,W)$, $\X \subseteq \R^{d_x}$, $V$ and $W$ are normed spaces of finite dimension $d_V$ and $d_W$, and $\epsilon_i \sim \Normal(0,\sigma^2\Id_V)$ for some $\sigma\ne 0$.
For a given RKHS $\calH$ of functions from $\calZ$ to $W$, and for a given collection $(y_i)_{i=1}^N\subset V$, we define the \emph{maximum a posteriori (MAP)} or \emph{penalised least squares estimator} $\widehat{\theta}_{\mathrm{MAP},N}$ as a minimiser of a nonparametric Tikhonov-Phillips functional $J_{r,\calH}^N:\calH\to \R$, where
\begin{equation}
	\label{eq:def_MAP_estimator}
	J_{r,\calH}^N(\theta) \coloneqq \frac{1}{2\sigma^2N} \sum_{i=1}^N \bignorm{y_i - \left([G(\theta)](x_i)\right)}_V^2 + \frac{r^2}{2} \norm{\theta}_\calH^2, \qquad \forall \theta \in \calH,
\end{equation}
with regularisation parameter $r > 0$.
See e.g. \cite[Proposition 2.5]{Siebel:2024} for a result that ensures the existence of the MAP estimator $\widehat{\theta}_{\mathrm{MAP},N}$.
In the context of model \eqref{eq:inverse_problem}, the MAP estimator can be interpreted as a point of maximal posterior probability density for a posterior distribution based upon a centred Gaussian prior distribution $\Pi$ with Cameron--Martin space $\calH$.
In this section, we consider the case $\calH = H^\alpha(\calZ,W)$.
We first recall \cite[Condition 2.3]{Siebel:2024}:
\begin{assumption}[Global boundedness and Lipschitz continuity]
	\label{assumptions_G_global}
	\begin{enumerate}
		\item There exist $\alpha, \gamma_1, \kappa \ge 0$ and $C_{\mathrm{Lip},2} > 0$ such that
		\[ \norm{G(\theta_1) - G(\theta_2)}_{L_\lambda^2} \le C_{\mathrm{Lip},2} \Big(1 + \norm{\theta_1}_{H^\alpha}^{\gamma_1}\vee \norm{\theta_2}_{H^\alpha}^{\gamma_1}\Big) \norm{\theta_1 - \theta_2}_{(H^\kappa)^*} \]
		for all $\theta_1, \theta_2 \in \Theta \cap \widetilde{\calH}$, where
		\[ \widetilde{\calH} = \begin{cases}
			H^\alpha(\calZ,W) & \text{if}~\kappa < \frac12, \\
			H_\cpt^\alpha(\calZ,W) & \text{if}~\kappa \ge \frac12.
		\end{cases} \]
		\item There exists $U > 0$ such that
		\[ \sup_{\theta\,\in\,\Theta} \norm{G(\theta)}_\infty \le U. \]
		\item There exist $\alpha, \gamma_2, \eta_1, \eta_2 \ge 0$ and $C_{\mathrm{Lip},\infty} > 0$ such that $\alpha > \max \{\eta_1,\eta_2\} + \frac{d_z}{2}$ and that
		\[ \norm{G(\theta_1) - G(\theta_2)}_\infty \le C_{\mathrm{Lip},\infty} \Big(1 + \max \left\{\norm{\theta_1}_{C^{\eta_1}}^{\gamma_2}, \norm{\theta_2}_{C^{\eta_1}}^{\gamma_2}\right\}\Big) \norm{\theta_1 - \theta_2}_{C^{\eta_2}} \]
		for all $\theta_1, \theta_2 \in H^\alpha(\calZ,W)$.
	\end{enumerate}
\end{assumption}
In contrast with \cref{assumption_Nickl_Condition_2_1_1}, which involves one Lipschitz continuity condition and one boundedness condition on the intersection of $\Theta$ with balls defined by the $\calR$-norm, \Cref{assumptions_G_global} involves two Lipschitz continuity conditions and one boundedness condition $G$ on the smaller spaces $\widetilde{\calH}$ or $H^\alpha$, without localisation to balls in the $\calR$ norm.

\begin{theorem}[Uniform $L^2$ convergence of MAP estimators]
	\label{conv_MAP_est}
	Let $\calZ \subset \R^{d_z}$ be a bounded set with smooth boundary.
	Suppose that \cref{assumption_local_Lipschitz_continuity_of_restricted_inverse_forward_map} holds with $\calR = H^\alpha(\calZ,W)$ and $\eta \in (0,1]$, and suppose that \cref{assumptions_G_global} holds, with
	\[ \alpha \ge \max \left\{ \eta_1\vee \eta_2 + d_z\vee \frac{d_z}{2}(1 + \gamma_2), \frac{d_z}{2}\gamma_1 - \kappa \right\}. \]
	Then for every $\overline{c}, M > 0$, there exist $c_7,c_8 > 0$ such that a minimiser $\widehat{\theta}_{\mathrm{MAP},N}$ of $J_{\delta_N,H^\alpha}^N$ satisfies
	\begin{equation}
		\label{eq:conc_ineq_Siebel_simplified}
		\sup_{\theta_0\,\in\,B_{H^\alpha}(M)} P_{\theta_0}^N \left(\norm{\widehat{\theta}_{\mathrm{MAP},N} - \theta_0}_{L^2} > c_7 \delta_N^\eta\right) < c_8 \exp \left(-\overline{c}N\delta_N^2\right),\quad \forall N\in\N
	\end{equation}
	with
	\[ \delta_N = N^{-\frac{\alpha + \kappa}{2\alpha + 2\kappa + d_z}} \]
	as in \eqref{eq_Nickl_2_19}.
	In particular, $(\widehat{\theta}_{\mathrm{MAP},N})_{N\in\N}$ is strongly uniformly stochastically bounded with respect to the rate $(\delta_N^\eta)_{N\in\N}$ over $\theta_0\in B_{H^\alpha}(M)$, with probability decay rate proportional to $(\exp(-\overline{c}N\delta_N^2))_{N\in\N}$.
\end{theorem}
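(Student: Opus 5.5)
The natural route is to derive the result from the concentration inequality in \cite[Corollary 2.10]{Siebel:2024} (via its underlying theorem), combined with the stability estimate of \cref{assumption_local_Lipschitz_continuity_of_restricted_inverse_forward_map}. The aim is to show that, with $r=\delta_N$, the estimator satisfies a uniform bound in the forward semimetric and in the $H^\alpha$ norm, namely
\[ \sup_{\theta_0\in B_{H^\alpha}(M)} P^N_{\theta_0}\Bigl( d_G(\widehat{\theta}_{\mathrm{MAP},N},\theta_0)+\delta_N\norm{\widehat{\theta}_{\mathrm{MAP},N}}_{H^\alpha} > c\,\delta_N\Bigr) \le c_8\exp(-\overline{c}N\delta_N^2). \]
This is the form in which \cite{Siebel:2024} states its main concentration result under \cref{assumptions_G_global}. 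The lower bound on $\alpha$ in the statement is exactly what makes the entropy and Lipschitz conditions there applicable.

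The first step is to check that the constants in that result depend on $\theta_0$ only through $\norm{\theta_0}_{H^\alpha}$, and depend on it monotonically. The constants enter through the Lipschitz factors $1+\norm{\theta}^{\gamma_i}$, through the envelope $U$, and through the basic inequality $J^N(\widehat\theta)\le J^N(\theta_0)$, which contributes $\delta_N^2\norm{\theta_0}_{H^\alpha}^2$. Taking the supremum over $B_{H^\alpha}(M)$ then gives the display above with constants $c,c_8$ depending only on $M$, $\overline{c}$, and the constants in \cref{assumptions_G_global}. This is also the point at which to argue that the bound holds for all $N$ rather than only for large $N$: for the finitely many small $N$, one enlarges $c_8$ so that $c_8\exp(-\overline{c}N\delta_N^2)\ge 1$.

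The second step works on the event where the display fails, i.e.\ where the bracket is at most $c\,\delta_N$. On this event both $\widehat\theta_{\mathrm{MAP},N}$ and $\theta_0$ lie in $B_{H^\alpha}(M')$ with $M'=\max\{M,c\}$, and $d_G(\widehat\theta_{\mathrm{MAP},N},\theta_0)\le c\,\delta_N$. Applying \cref{assumption_local_Lipschitz_continuity_of_restricted_inverse_forward_map} with $\calR=H^\alpha$, radius $M'$, and $\delta=c\,\delta_N\le\delta_0$ (which holds for large $N$) gives
\[ \norm{\widehat\theta_{\mathrm{MAP},N}-\theta_0}_{L^2_\zeta}\le L'(c\,\delta_N)^\eta. \]

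The stability estimate controls the $L^2_\zeta$ norm, whereas the theorem is stated in the standard $L^2$ norm. I would therefore either read the statement with $\zeta$ equal to normalized Lebesgue measure, or pass between the two norms using a density with positive essential infimum, as in \Cref{remark_uniform_L2_convergence_posterior_mean}. Setting $c_7\coloneqq L'c^\eta$ (times the density constant, if needed) gives \eqref{eq:conc_ineq_Siebel_simplified}. The strong uniform stochastic boundedness then follows directly from \Cref{definition_uniform_stochastic_boundedness}.

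I expect the main obstacle to be the first step. One has to trace through the proof in \cite{Siebel:2024} that the constants are uniform over $\theta_0$ in the ball, and that the regularisation parameter $r=\delta_N$ matches their scaling. Their choice may differ by a factor such as $\sigma$, which would have to be absorbed into $c$.
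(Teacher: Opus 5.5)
Your proposal is correct and follows essentially the same route as the paper. The paper checks that \cref{assumptions_G_global} gives \cite[Condition 2.3]{Siebel:2024} and that \cref{assumption_local_Lipschitz_continuity_of_restricted_inverse_forward_map} on $H^\alpha$-balls gives the stability condition \cite[Condition 2.9]{Siebel:2024}; it then applies \cite[Corollary 2.10]{Siebel:2024} directly with constants uniform over $B_{H^\alpha}(M)$, taking $\zeta$ to be the uniform distribution on $\calZ$ so that $L^2_\zeta$ and $L^2$ agree up to a constant. The only difference is that you unpack that corollary by applying the stability estimate by hand to Siebel's forward-error and $H^\alpha$-norm bound, which makes the radius $\max\{M,c\}$ and the requirement $c\,\delta_N\le\delta_0$ explicit.
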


\begin{proof}[Proof of \Cref{conv_MAP_est}]
	First, note that \cite[Condition 2.3]{Siebel:2024} is satisfied by \cref{assumptions_G_global}.
	Next, let us show that the stability estimate \cite[Condition 2.9]{Siebel:2024} is satisfied.
	To this end, note that for all $\theta_1, \theta_2 \in B_{H^\alpha}(M)$, we have
	$\norm{\theta_1}_{H^\alpha} + \norm{\theta_2}_{H^\alpha} \le 2M$,
	so that \cite[Condition 2.9]{Siebel:2024} follows from \cref{assumption_local_Lipschitz_continuity_of_restricted_inverse_forward_map} with $M$ replaced by $2M$.
	Thus, we may apply \cite[Corollary 2.10]{Siebel:2024}, which yields the existence of the stated scalars $c_7,c_8>0$, such that
	\begin{align*}
		&\hspace{14pt}P_{\theta_0}^N \left(\norm{\widehat{\theta}_{\mathrm{MAP},N} - \theta_0}_{L^2} > c_7 \delta_N^\eta\right) \\
		&\le P_{\theta_0}^N \left(\norm{\widehat{\theta}_{\mathrm{MAP},N} - \theta_0}_{L^2} > c_7 \delta_N^\eta~\text{or}~\norm{\widehat{\theta}_{\mathrm{MAP},N}}_{H^\alpha} > c_7\right) < c_8 \exp \left(-\overline{c}N\delta_N^2\right)
	\end{align*}
	for all $\theta_0 \in B_{H^\alpha}(M)$ and every $N\in\N$.
	The final statement follows since \eqref{eq:conc_ineq_Siebel_simplified} implies that the condition \eqref{eq_strong_uniform_stochastic_boundedness} from \Cref{definition_uniform_stochastic_boundedness} for strong uniform stochastic boundedness is satisfied with $d$, rate $(\delta_N)_{N\in\N}$, set $S$, probability decay rate $(\epsilon_N)_{N\in\N}$, and scalars $K>0$ and $N_0\in\N$ replaced with the $L^2(\calZ,W)$ metric, the rate $(\delta_N^\eta)_{N\in\N}$, the set $B_{H^\alpha}(M)$, probability decay rate $(c_8\exp(-\overline{c}N\delta_N^2))_{N\in\N}$, and scalars $c_7$ and $1$ respectively.
\end{proof}

In \Cref{conv_MAP_est}, $\zeta$ is chosen as the uniform distribution on the bounded set $\calZ \subset \R^{d_z}$. Thus, $\zeta$ is proportional to $\mathbf{1}_\calZ\di z$, where $\di z$ denotes the Lebesgue measure on $\R^{d_z}$.
Moreover, the regularisation parameter is chosen as $r = \delta_N$, since we consider the minimisers of $J_{\delta_N,H^\alpha}^N$; cf. the definition \eqref{eq:def_MAP_estimator} of $J_{r,H^\alpha}^N$.
The functional $J_{r,\calH}^N$ is related to the log-likelihood $\ell_N$ via
\[ NJ_{r,\calH}^N(\theta) = -\ell_N(\theta) + \frac{Nr^2}{2}\norm{\theta}_{H^\alpha}^2, \quad \forall \theta \in H^\alpha(\calZ,W). \]
Therefore, the choice of $r$ can be interpreted as choosing a Gaussian prior with Cameron--Martin space $H^\alpha$ that is scaled by a constant multiple of
\[ \left(Nr^2\right)^{-\frac12} = N^{-\frac12}\delta_N^{-1} = N^{-\frac{d_z}{2\alpha + 2\kappa + d_z}}, \]
which is the same scaling used in \cref{definition_rescaled_Gaussian_priors}.

\begin{corollary}[$L^2$ distinguishability for MAP estimators]
	\label{corollary_distinguishability_MAP}
	Let $M > 0$.
	Under the assumptions of \cref{conv_MAP_est}, for every $\overline{c} > 0$, there exists $c_7,c_8 > 0$ such that the type $1$ and type $2$ errors of the tests $\Psi_N \coloneqq \mathbf{1}_{T_N > t_N}$ for 
	\[ T_N \coloneqq \inf_{h \in H_0} \norm{\widehat{\theta}_{\mathrm{MAP},N} - h}_{L^2(\calZ,W)},\qquad t_N \coloneqq c_7\delta_N^\eta \]
	and $\delta_N$ as in \eqref{eq_Nickl_2_19} for testing $H_0 \subseteq B_{H^\alpha}(M)$ against
	\[ 
		H_{1,N} \subset \left\{\theta \in B_{H^\alpha}(M): \inf_{h \in H_0} \norm{\theta - h}_{L^2} \ge 2c_7\delta_N^\eta\right\} 
	\]
	satisfy
	\begin{align*}
		\sup_{\theta_0 \in H_0} P_{\theta_0}^N(\Psi_N = 1) \le c_8 \exp(-\overline{c}N\delta_N^2)\qquad\text{and}\qquad
		\sup_{\theta_0 \in H_{1,N}} P_{\theta_0}^N(\Psi_N = 0) \le c_8 \exp(-\overline{c}N\delta_N^2)
	\end{align*}
	for all $N \in \N$.
\end{corollary}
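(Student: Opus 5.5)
The plan is to combine \Cref{conv_MAP_est} with \Cref{corollary_distinguishability_under_strong_uniform_stochastic_boundedness}, in the same way that \Cref{corollary_L2_distinguishability_posterior_mean} was obtained from \Cref{corollary_uniform_L2_convergence_posterior_mean}. Fix $M>0$ and $\overline{c}>0$. Under the assumptions of \cref{conv_MAP_est}, that theorem provides $c_7,c_8>0$ such that the concentration inequality \eqref{eq:conc_ineq_Siebel_simplified} holds for all $N\in\N$, uniformly over $\theta_0\in B_{H^\alpha}(M)$. These are the constants $c_7,c_8$ we will use in the statement.

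Next, we match the data of \eqref{eq_strong_uniform_stochastic_boundedness} in \Cref{definition_uniform_stochastic_boundedness}. We take $S\leftarrow B_{H^\alpha}(M)$ and let $d$ be the metric induced by the $L^2(\calZ,W)$ norm. The estimators are $\widehat{\theta}_N\leftarrow\widehat{\theta}_{\mathrm{MAP},N}$, the rate is $\delta_N\leftarrow\delta_N^\eta$, and the probability decay rate is $\epsilon_N\leftarrow c_8\exp(-\overline{c}N\delta_N^2)$, with $K\leftarrow c_7$ and $N_0\leftarrow 1$. One small point needs attention. \eqref{eq:conc_ineq_Siebel_simplified} controls the event $\{d>c_7\delta_N^\eta\}$, whereas \eqref{eq_strong_uniform_stochastic_boundedness} and \Cref{errors_plug_in_general} use $\{d\geq t_N\}$. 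There are two ways to handle this. One is to enlarge $c_7$ slightly, say to $2c_7$, which still yields constants of the asserted form since the statement only claims existence of $c_7$. The other, more direct, is to rerun the argument of \Cref{errors_plug_in_general} with strict inequalities. In that argument the type~1 bound already involves $\{d(\widehat\theta_N,\theta)>t_N\}$. For the type~2 bound, $\inf_h d(\widehat\theta_N,h)\le t_N$ together with $d(\theta,H_0)\ge 2t_N$ gives only $d\ge t_N$, so the enlargement of $c_7$ is the cleaner route and I would adopt it.

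Finally, we apply \Cref{corollary_distinguishability_under_strong_uniform_stochastic_boundedness} with the threshold $t_N=c_7\delta_N^\eta$ and with $H_0\subseteq B_{H^\alpha}(M)$. The alternatives $H_{1,N}$ are contained in $\{\theta\in B_{H^\alpha}(M):\inf_{h\in H_0}\norm{\theta-h}_{L^2}\ge 2c_7\delta_N^\eta\}$, exactly as in \eqref{eq_null_hypothesis_alternative_hypothesis_specific_strong_uniform_stochastic_boundedness}. The bound \eqref{eq_type1_type2_errors_bounds_strong_uniform_stochastic_boundedness} then gives both error bounds $c_8\exp(-\overline{c}N\delta_N^2)$ for all $N\ge N_0=1$, that is, for all $N\in\N$.

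The only genuine care point is the strict versus non-strict inequality and the resulting adjustment of $c_7$. Everything else is a direct substitution.
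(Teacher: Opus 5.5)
Your proposal is correct and follows the paper's proof. The paper likewise feeds \eqref{eq:conc_ineq_Siebel_simplified} into \Cref{corollary_distinguishability_under_strong_uniform_stochastic_boundedness} with $S\leftarrow B_{H^\alpha}(M)$, rate $\delta_N^\eta$, $\epsilon_N\leftarrow c_8\exp(-\overline{c}N\delta_N^2)$, $K\leftarrow c_7$ and $N_0\leftarrow 1$. Your replacement of $c_7$ by $2c_7$ is valid because $\delta_N^\eta>0$ and the statement only asserts existence of $c_7$. It repairs the mismatch between the strict event $\{\cdot>c_7\delta_N^\eta\}$ and the non-strict event $\{\cdot\ge K\delta_N\}$ required by the type~2 step, which the paper passes over silently.
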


\begin{proof}[Proof of \Cref{corollary_distinguishability_MAP}]
	The conclusion \eqref{eq:conc_ineq_Siebel_simplified} of strong uniform stochastic boundedness from \cref{conv_MAP_est} implies that the hypothesis of \cref{corollary_distinguishability_under_strong_uniform_stochastic_boundedness} holds with the set $S\leftarrow B_{H^\alpha}(M)$, the estimators $(\widehat{\theta}_N)_{N\in\N}\leftarrow (\widehat{\theta}_{\mathrm{MAP},N})_{N\in\N}$, the rate $( \delta_N^\eta)_{N\in\N}$, the probability decay rate $(c_8\exp(-\overline{c}N\delta_N^2))_{N\in\N}$, and the scalars $K\leftarrow c_7$ and $N_0\leftarrow 1$.
	The stated error bounds for the tests $(\Psi_N)_{N\in\N}$ thus follow from the error bounds \eqref{eq_type1_type2_errors_bounds_strong_uniform_stochastic_boundedness} in \cref{corollary_distinguishability_under_strong_uniform_stochastic_boundedness}.
\end{proof}

\section{Inverse problems governed by linear ordinary differential equations}
\label{sec:app_ode_ivp}

In this section, we consider a class of linear ODE-IVPs that include the two-compartment model described in \cref{sec:cov_mech_mod} and define a nonlinear inverse problem that can be analysed within the framework of \Cref{sec:nonlin_ip}.

Let $\calP$ be a nonempty open subset of $\R^{d_p}$. For every $p\in\calP$, consider the corresponding time-homogeneous, linear ODE-IVP
\begin{equation}
	\label{eq:ode_ivp}
	\begin{aligned}
		\frac{\di}{\di t} s(t,p) &= A(p)s(t,p) \quad \text{for}~t \in [0,T], \\
		s(0,p) &= s_0(p)
	\end{aligned}
\end{equation}
in $\R^{d_s}$, $d_s \in \N$, where both the matrix $A: \calP \to \R^{d_s \times d_s}$ and the initial condition $s_0:\calP \to \R^{d_s}$ depend on a parameter vector $p \in \calP$, and $0<T <\infty$ is fixed.
Both $A(p)$ and $s_0(p)$ are assumed to be locally bounded and locally Lipschitz continuous functions of $p$, and $A(p)$ is assumed to be diagonalisable over $\R$.

\begin{assumption}[{\cite[Assumption 2.1]{Lie:2024}}]
	\label{Lie_assumption_2_1}
	For every $M > 0$, there exists $C_1(M) > 0$ such that
	\begin{align*}
		\norm{A(p)}_2 &\le C_1(M), & \norm{s_0(p)} &\le C_1(M), \\
		\norm{A(p) - A(q)}_2 &\le C_1(M)\norm{p - q}_2, & \norm{s_0(p) - s_0(q)}_2 &\le C_1(M)\norm{p - q}_2
	\end{align*}
	for all $p, q \in B_2(0,M)\coloneqq \{p^\prime\in \R^{d_p}: \norm{p^\prime}_2\leq M\}$.
\end{assumption}
For every $d\in\N$, $\mathrm{GL}(d,\R)$ denotes the set of invertible elements of $\R^{d\times d}$.
\begin{assumption}[{\cite[Assumption 2.3]{Lie:2024}}]
	\label{Lie_assumption_2_3}
	For every $p \in \calP$, there exist $\Lambda(p), V(P) \in \mathrm{GL}(d_s,\R)$ with diagonal $\Lambda(p)$, such that $A(p) = V(p)\Lambda(p)V^{-1}(p)$.
\end{assumption}

If \cref{Lie_assumption_2_3} holds, then the first component $s_1$ of the solution $s$ to the ODE-IVP \eqref{eq:ode_ivp} is of the form
\[ s_1(t,p) = \sum_{i=1}^{\dint} a_i(p) e^{\lambda_i(p)t} \quad \text{for all}~t \in [0,T] \]
with $\N\ni\dint \le d_s$ distinct real-valued functions $\lambda_1, \dots, \lambda_\dint$ and $\dint$ real-valued functions $a_1, \dots, a_\dint$ of $p$, see page 6 in \cite{Lie:2024}. Here, we refer to $\calP \ni p\mapsto  (a_i(p), \lambda_i(p))_{i=1}^p\in \R^{2\dint}$ as
 the \emph{coefficient map}, and to $2\dint$ as the \emph{intrinsic dimension} of $s_1$.

 For the following assumption, we denote by $A^\top$ the transpose of $A\in\R^{m\times n}$, $m,n\in\N$.
\begin{assumption}
	\label{assumptions_coefficient_map}
	The coefficent map of $s_1$ belongs to $C^1(\calP,\R^{2\dint})$, $a_i(p) > 0$ for all $p \in \calP$ and $i \in \{1,\ldots,\dint\}$, and for every $q \in \calP$, the Jacobian of the coefficient map evaluated at $q$, i.e.
	\begin{equation}
		\left.
		\mathcal{J}(q) = \begin{bmatrix}
			\nabla_p a_1(p) & \hdots & \nabla_p a_\dint(p) & 
			\nabla_p \lambda_1(p) & \hdots & \nabla_p \lambda_\dint(p)
		\end{bmatrix}^\top
		\right\vert_{p = q} \in \R^{2\dint \times d_p},
	\end{equation}
	has full rank.
\end{assumption}
\Cref{assumptions_coefficient_map} implies that both Assumptions 2.5 and 3.1 in \cite{Lie:2024} hold. In particular, \Cref{assumptions_coefficient_map} implies that the first component of the solution $s_1(t,p)$ is strictly positive, for every $t \in [0,T]$ and $p \in \calP$. This is important for the definition \eqref{eq:def_G_ode_ivp} of $G$ below.

Recall the discussion from \Cref{sec:cov_mech_mod} regarding the set $\X$ of `covariates' in the context of pharmacokinetic models. Fix a deterministic collection $(t_j)_{j =1}^{d_y} \subset [0,T]$ of $d_y\in\N$ distinct observation times, a set of admissible covariates $\X\subseteq \R^{d_x}$, a probability measure $\mu_X$ on $\X$, and a set of admissible CPMs $\Theta \subseteq L_{\mu_X}^2(\X,\calP)$.
Define the forward model $G:\Theta \to L_{\mu_X}^2(\X,\R^{d_y})$ by
\begin{equation}
	\label{eq:def_G_ode_ivp}
	[G(\theta)](x) \coloneqq \left(\ln s_1(t_j,\theta(x))\right)_{j=1}^{d_y}, \quad \forall \theta \in \Theta, x \in \X.
\end{equation}
Consider the inverse problem defined by \eqref{eq:inverse_problem} and \eqref{eq:def_G_ode_ivp}.
In \eqref{eq:def_G_ode_ivp}, we assume that only the first component of the solution is observable and store the logarithms of its values at the observation times $(t_j)_{j=1}^{d_y}$. 

First, we show that the two-compartment model described in \Cref{sec:cov_mech_mod} belongs to the considered class of ODE-IVPs.
\begin{proposition}
\label{proposition_two_compartment_model_class_ODEs}
	The two-compartment model satisfies \cref{Lie_assumption_2_1,Lie_assumption_2_3,assumptions_coefficient_map} with $2\dint = d_p = 4$.
\end{proposition}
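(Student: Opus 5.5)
We verify the three assumptions directly from the explicit form of the two-compartment system in \cref{sec:cov_mech_mod}, where $d_p = 4$, $d_s = 2$,
\[ A(p) = \begin{bmatrix} -k_{10}-k_{12} & k_{12} \\ k_{21} & -k_{21}\end{bmatrix}, \qquad s_0(p) = (D_0 w_0 e^{-p_2},0)^\top, \]
with rate constants $k_{10} = e^{p_1-p_2}$, $k_{12} = e^{p_3-p_2}$ and $k_{21} = e^{p_3-p_4}$, all strictly positive.

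\Cref{Lie_assumption_2_1} is immediate. All entries of $A(p)$ and $s_0(p)$ are finite sums of exponentials of linear forms in $p$, hence smooth. On the compact ball $B_2(0,M)$ they are therefore bounded and Lipschitz, with constants controlled by $e^{2M}$ via the mean value theorem.

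For \cref{Lie_assumption_2_3}, we compute $\operatorname{tr}A = -(k_{10}+k_{12}+k_{21})$ and $\det A = k_{10}k_{21}$. The discriminant is
\[ (k_{10}+k_{12}+k_{21})^2 - 4k_{10}k_{21} = (k_{10}+k_{12}-k_{21})^2 + 4k_{12}k_{21} > 0 . \]
So $A(p)$ has two distinct real eigenvalues $\lambda_1(p) > \lambda_2(p)$ and is diagonalisable over $\R$. Both eigenvalues are negative because the trace is negative and the determinant is positive. It follows that $\dint = 2$ and $2\dint = 4 = d_p$.

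The main work is \cref{assumptions_coefficient_map}. Solving the system gives
\[ s_1(t) = a_1 e^{\lambda_1 t} + a_2 e^{\lambda_2 t}, \qquad a_i = \frac{c\,(\lambda_i + k_{21})}{\lambda_i - \lambda_j}, \quad \{i,j\} = \{1,2\}, \]
where $c = D_0 w_0 e^{-p_2}$. This follows from $a_1 + a_2 = s_1(0) = c$ and $\dot s_1(0) = \lambda_1 a_1 + \lambda_2 a_2 = -(k_{10}+k_{12})c$.

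Positivity of $a_1$ and $a_2$ reduces to the interlacing $\lambda_2 < -k_{21} < \lambda_1$. This holds because the characteristic polynomial $\chi(\lambda) = (\lambda + k_{10}+k_{12})(\lambda + k_{21}) - k_{12}k_{21}$ satisfies $\chi(-k_{21}) = -k_{12}k_{21} < 0$. The coefficient map is $C^1$ since distinct eigenvalues depend smoothly on the matrix entries.

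For full rank of the Jacobian, we factor the coefficient map as a composition of maps with invertible derivatives:
\[ p \mapsto (u_1,u_2,u_3,u_4) = (p_1-p_2,\; p_2,\; p_3-p_2,\; p_3-p_4) \mapsto (c, k_{10}, k_{12}, k_{21}) \mapsto (a_1, a_2, \lambda_1, \lambda_2). \]
The first map is linear and invertible. The second is the coordinatewise map $(D_0 w_0 e^{-u_2}, e^{u_1}, e^{u_3}, e^{u_4})$, a diffeomorphism onto its image. It therefore remains to show that $(c,k_{10},k_{12},k_{21}) \mapsto (a_1,a_2,\lambda_1,\lambda_2)$ is a local diffeomorphism.

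We do this by exhibiting a smooth inverse on its image:
\begin{itemize}
\item $c = a_1 + a_2$.
\item $k_{21} = -(a_1\lambda_2 + a_2\lambda_1)/c$, obtained from the formula for $a_i$.
\item $k_{10} = \lambda_1\lambda_2 / k_{21}$, from $\det A = k_{10}k_{21}$.
\item $k_{12} = -(\lambda_1+\lambda_2) - k_{10} - k_{21}$, from the trace.
\end{itemize}
The existence of a smooth left inverse forces the Jacobian of the forward map to be injective, hence of full rank $4$.

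This inversion step is the main obstacle. If the smooth-inverse argument turns out to be awkward to make rigorous, the fallback is to compute the $4\times 4$ Jacobian determinant directly in the variables $(c, k_{10}, k_{12}, k_{21})$. One would then need to check that it is a nonzero multiple of $(\lambda_1-\lambda_2)^{-1}$ times positive factors.
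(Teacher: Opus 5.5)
Your argument is correct, but it is not the paper's route. The paper's proof is a two-line citation: \cref{Lie_assumption_2_1,Lie_assumption_2_3} come from Proposition 5.3 of \cite{Lie:2024}, and \cref{assumptions_coefficient_map} with $\dint=2$ comes from Lemma B.2 there.

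You instead verify everything directly from the explicit matrix, and your computations check out: $\det A=k_{10}k_{21}$, the discriminant identity, $a_i=c(\lambda_i+k_{21})/(\lambda_i-\lambda_j)$, the interlacing via $\chi(-k_{21})=-k_{12}k_{21}<0$, and each inversion formula (for instance $a_1\lambda_2+a_2\lambda_1=-c\,k_{21}$). Your observation that both eigenvalues are negative is exactly what gives $\Lambda(p)\in\mathrm{GL}(2,\R)$ in \cref{Lie_assumption_2_3}. Positivity of $c$ uses the implicit assumption $D_0w_0>0$.

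The step you flag as the main obstacle is already rigorous, so the fallback determinant computation is unnecessary. Write $F\colon(c,k_{10},k_{12},k_{21})\mapsto(a_1,a_2,\lambda_1,\lambda_2)$ and let $\Phi$ be your inverse formulas. $\Phi$ is a rational map whose only denominators are $a_1+a_2$ and $a_1\lambda_2+a_2\lambda_1$. On the image of $F$ these equal $c>0$ and $-c\,k_{21}<0$. So $\Phi$ is smooth on an open neighbourhood of that image. Differentiating $\Phi\circ F=\mathrm{id}$ on the open set $(0,\infty)^4$ gives $D\Phi\,DF=I$, so $DF$ is invertible, and hence so is $\mathcal{J}(q)$ as a product of three invertible Jacobians.

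Your route buys self-containedness and a slightly stronger conclusion. All three factors are injective, so the coefficient map is a diffeomorphism of $\R^4$ onto its image. That means $p$ is globally identifiable from $(a_i,\lambda_i)_{i=1}^2$, not merely locally as full rank of $\mathcal{J}$ would give. The paper's citation is shorter and defers the verification to the reference.
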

\begin{proof}[Proof of \Cref{proposition_two_compartment_model_class_ODEs}]
	It follows from Proposition 5.3 in \cite{Lie:2024} that \cref{Lie_assumption_2_1,Lie_assumption_2_3} hold.
	Moreover, Lemma B.2 in \cite{Lie:2024} shows that \cref{assumptions_coefficient_map} is satisfied with $\dint = 2$.
\end{proof}

Next, we show that for the considered class of ODE-IVPs, the operator $G$ in \eqref{eq:def_G_ode_ivp} satisfies the key assumptions stated in \Cref{sec:nonlin_ip} on the forward model.

\begin{proposition}
	\label{ode_ivp_satisfies_ip_assumptions}
	Suppose that the ODE-IVP \eqref{eq:ode_ivp} satisfies \cref{Lie_assumption_2_1,Lie_assumption_2_3,assumptions_coefficient_map}, that $\X\subset\R^{d_x}$ is a bounded, smooth domain, that $\mu_X$ is a probability measure on $\X$ with a bounded Lebesgue density $p_X$, that $\calR$ is continuously embedded in $L^\infty(\X,\calP)$, and that $d_y \ge d_p = 2\dint$. Then, the operator $G$ defined in \eqref{eq:def_G_ode_ivp} satisfies \cref{assumption_Nickl_Condition_2_1_1,assumption_local_Lipschitz_continuity_of_restricted_inverse_forward_map} with $\kappa \leftarrow 0$, $\eta \leftarrow 1$, $\calZ \leftarrow \X$, $\zeta \leftarrow \mu_X$, $\lambda \leftarrow \mu_X$, $W \leftarrow \R^{d_p}$, $V \leftarrow \R^{d_y}$,
	\begin{equation}
		\label{eq:U_L_ode_ivp}
		U = \sqrt{d_y}C_2, \quad
		L = \sqrt{d_y}L_s e^{C_2}\norm{p_X}_{L^\infty(\X)}^{1/2}, \quad \text{and} \quad
		L^\prime = C_3,
	\end{equation}
	for  $L_s = L_s(M,T)$ as in \cite[Proposition 2.2]{Lie:2024}, $C_2 = C_2(M,T)$ as in \cite[Proposition 3.2]{Lie:2024}, and $C_3 = C_3(M,T,(t_j)_{j = 1}^{d_y})$ as in \cite[Proposition 3.3]{Lie:2024}.
\end{proposition}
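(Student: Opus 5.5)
The plan is to reduce every required property of $G$ to the pointwise results for the ODE-IVP in \cite{Lie:2024}. We use the embedding $\calR\hookrightarrow L^\infty(\X,\calP)$ to convert $\calR$-balls into uniform bounds on parameter vectors. Fix $M>0$ and let $c_{\calR}$ denote the embedding constant. Then every $\theta\in\Theta\cap B_\calR(M)$ satisfies $\norm{\theta(x)}_2\le c_{\calR}M=:M'$ for $\mu_X$-a.e. $x\in\X$. All constants $C_2,L_s,C_3$ below are therefore taken with $M\leftarrow M'$, a relabelling that we absorb into the dependence on $M$.

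For the boundedness condition \eqref{eq_Nickl_2_3}, we invoke \cite[Proposition 3.2]{Lie:2024}. We read it as a uniform bound $\abs{\ln s_1(t,p)}\le C_2(M,T)$ for $t\in[0,T]$ and $\norm{p}_2\le M$; this is well defined because $s_1>0$ by \cref{assumptions_coefficient_map}. Hence each of the $d_y$ components of $[G(\theta)](x)$ is bounded by $C_2$, so $\norm{G(\theta)}_\infty\le\sqrt{d_y}C_2=U$.

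For the Lipschitz condition \eqref{eq_Nickl_2_4} with $\kappa=0$, we take \cite[Proposition 2.2]{Lie:2024}, which gives $\abs{s_1(t,p)-s_1(t,q)}\le L_s\norm{p-q}_2$. We combine it with the mean value theorem for $\ln$. Since $s_1\ge e^{-C_2}$ on the relevant ball, $\ln$ is $e^{C_2}$-Lipschitz there, which yields
\[
\norm{[G(\theta_1)](x)-[G(\theta_2)](x)}\le\sqrt{d_y}\,L_s e^{C_2}\norm{\theta_1(x)-\theta_2(x)}_2 .
\]
Squaring and integrating against $\mu_X$, and bounding $\di\mu_X\le\norm{p_X}_{L^\infty}\di x$, gives the Lebesgue $L^2$ norm, i.e. the $(H^0)^*$ norm. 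This produces exactly the stated $L$.

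For the stability estimate, \cref{assumption_local_Lipschitz_continuity_of_restricted_inverse_forward_map} with $\eta=1$, we use \cite[Proposition 3.3]{Lie:2024}. That result relies on $d_y\ge 2\dint=d_p$ and on the full-rank Jacobian in \cref{assumptions_coefficient_map}, i.e. the hypotheses of \cite[Assumptions 2.5, 3.1]{Lie:2024}, which \cref{assumptions_coefficient_map} implies. We expect it to give a pointwise inverse Lipschitz bound $\norm{p-q}_2\le C_3\norm{\widetilde G(p)-\widetilde G(q)}$ for $p,q$ in the $M'$-ball, with $\widetilde G(p)=(\ln s_1(t_j,p))_j$. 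Applying this at each $x$, squaring, and integrating against $\mu_X$ gives $\norm{\theta_1-\theta_2}_{L^2_{\mu_X}}\le C_3\norm{G(\theta_1)-G(\theta_2)}_{L^2_{\mu_X}}$, which is \eqref{eq_stability_estimate} with $L'=C_3$ and arbitrary $\delta_0$. Here both norms use the same measure $\mu_X$, since $\zeta=\lambda=\mu_X$, so no density bound is needed.

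The main obstacle is this last step. It requires checking that the cited stability result is genuinely pointwise and global on the $M'$-ball, rather than only local near a fixed $p$. If it is only local, we would upgrade it by a compactness argument: injectivity of $p\mapsto\widetilde G(p)$, obtained from the distinct-exponential structure with $d_y\ge 2\dint$ samples, together with the local full-rank bound gives a global constant on the compact ball. A secondary point to verify is measurability and a.e. validity of the pointwise bounds, which is routine.
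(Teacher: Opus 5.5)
Your proposal is correct and follows essentially the paper's route. The embedding $\calR\hookrightarrow L^\infty(\X,\calP)$ lets you invoke \cite[Propositions 3.2 and 3.3]{Lie:2024}, and the bounded density $p_X$ (statement \cref{embedding_L2_L2zeta} of \Cref{lemma_embedding}) turns the $L^2_{\mu_X}$ Lipschitz bound into the Lebesgue $L^2=(H^0)^*$ bound. The only cosmetic difference is that you rederive that Lipschitz bound from \cite[Proposition 2.2]{Lie:2024} and the mean value theorem for $\ln$, whereas the paper cites \cite[equation (3.5)]{Lie:2024} with $q\leftarrow 2$. The paper applies \cite[Proposition 3.3]{Lie:2024} with $q\leftarrow 2$ directly on $B_\calR(M)$, so your compactness fallback is unnecessary; note also that it would not close as stated. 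The exponential-sum structure only shows that $\widetilde G(p)=\widetilde G(q)$ forces equal coefficients, and full rank of $\mathcal{J}$ in \cref{assumptions_coefficient_map} gives injectivity of the coefficient map only locally.
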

\begin{proof}[Proof of \cref{ode_ivp_satisfies_ip_assumptions}]
	\Cref{assumptions_coefficient_map} implies that both Assumptions 2.5 and 3.1 in \cite{Lie:2024} hold.
	Furthermore, \Cref{Lie_assumption_2_1} and \Cref{Lie_assumption_2_3} correspond to Assumptions 2.1 and 2.3 in \cite{Lie:2024} respectively. 
	The hypothesis that $\calR$ is continuously embedded in $L^\infty(\X,\calP)$ corresponds to the hypothesis \cite[equation (3.2)]{Lie:2024} on the regularisation space $\calR$ made in \cite[Proposition 3.2]{Lie:2024}. Given these observations, we may apply \cite[Proposition 3.2]{Lie:2024}, in particular the conclusion \cite[equation (3.5)]{Lie:2024} with $\mu\leftarrow \mu_X$ and $q \leftarrow 2$, to obtain that for every $M>0$,
	\begin{equation*} 
	 \norm{G(\theta_1) - G(\theta_2)}_{L^2_{\mu_X}} 
	\le \sqrt{d_y}L_s e^{C_2}\norm{\theta_1 - \theta_2}_{L^2_{\mu_X}},\quad \forall \theta_1,\theta_2\in  B_{\calR}(M).
	\end{equation*}
    By the hypothesis that $\mu_X$ admits a bounded Lebesgue density $p_X$, we may apply \cref{embedding_L2_L2zeta} from \Cref{lemma_embedding} with $\zeta\leftarrow\mu_X$ to further bound the right-hand side of the preceding inequality, thus obtaining
	\[ \norm{G(\theta_1) - G(\theta_2)}_{L^2_{\mu_X}} 
	\le \sqrt{d_y}L_s e^{C_2}\norm{p_X}_{L^\infty(\X)}^{1/2} \norm{\theta_1 - \theta_2}_{L^2} \]
	for all $\theta_1, \theta_2 \in B_\calR(M)$. 
	The inequality
	\[ \sup_{\theta \in B_\calR(M)} \norm{G(\theta)}_\infty \le \sqrt{d_y}C_2 < \infty \]
	follows from the conclusion \cite[equation (3.3)]{Lie:2024} of \cite[Proposition 3.2]{Lie:2024}. Thus, \cref{assumption_Nickl_Condition_2_1_1} is satisfied with $\kappa \leftarrow 0$, with $\calZ\leftarrow \X$, $\zeta\leftarrow \mu_X$, $\lambda\leftarrow \mu_X$, $W\leftarrow \R^{d_p}$, $V\leftarrow \R^{d_y}$, and with  $U$ and $L$ as in \eqref{eq:U_L_ode_ivp}.
	Finally, it follows from \cite[Proposition 3.3]{Lie:2024} with $q \leftarrow 2$ that \cref{assumption_local_Lipschitz_continuity_of_restricted_inverse_forward_map} is satisfied with $\eta \leftarrow 1$ and $L^\prime$ as in \eqref{eq:U_L_ode_ivp}.
\end{proof}

The computations verifying that the two-compartment model satisfies \cref{assumption_Nickl_Condition_2_1_1} suggest that the two-compartment model does not satisfy the global conditions of \cref{assumptions_G_global}, because the constants $U$ and $L$ for which it satisfies \cref{assumption_Nickl_Condition_2_1_1} depend exponentially on the radius $M$;
see Lemma B.1 and Proposition 3.2 in \cite{Lie:2024}.
In particular, \cite[equation (3.7)]{Lie:2024} shows that the scalar $C_2$ that determines $U$ and $L$ as shown in \eqref{eq:U_L_ode_ivp}, depends on $C_1$ in \Cref{Lie_assumption_2_1} according to
\begin{equation*}
 C_2(M,T)=C_1(M)T+\vert \log C_1(M)\vert+C^\prime(M) C_1(M)T
\end{equation*}
for some $C^\prime(M)>0$ and $T$. On the other hand, \cite[Lemma B.1]{Lie:2024} states that $C_1(M)\lesssim e^{2M}$.
Therefore, we cannot apply \cref{corollary_distinguishability_MAP} to conclude distinguishability for plug-in tests based upon the MAP estimator.

\subsection{Uniform posterior contraction and distinguishability in {\footnotesize $L^2_{\mu_X}$}}

In this section, we apply the results from \cref{sec:nonlin_ip} to prove posterior contraction and distinguishability in $L^2$, for the inverse problem defined by \eqref{eq:inverse_problem} and \eqref{eq:def_G_ode_ivp}.

\begin{theorem}[Uniform $L^2_{\mu_X}$ posterior contraction]
	\label{posterior_contraction_L2_ode_ivp}
	Suppose that the ODE-IVP \eqref{eq:ode_ivp} satisfies \cref{Lie_assumption_2_1,Lie_assumption_2_3,assumptions_coefficient_map}, that $\X\subset\R^{d_x}$ is a bounded, smooth domain, that $\mu_X$ is a probability measure on $\X$ with a bounded Lebesgue density $p_X$, and that $d_y \ge d_p = 2\dint$.
	Let the base prior distribution $\Pi^\prime $ satisfy \cref{assumption_Nickl_Condition_2_2_1}, such that its RKHS $\calH$ is continuously embedded in $H^\alpha(\X,\calP)$ for some $\alpha>0$ and such that $\calR$ is continuously embedded in $L^\infty(\X,\calP)$, and let $(\Pi_N)_{N\in\N}$ be as in \Cref{definition_rescaled_Gaussian_priors}. Moreover, let $S \subset \calH$ be bounded in the $\calH$ norm. Then, for every $b > 0$ and sufficiently large $M, m > 0$, there exists $N_0 \in \N$ such that for every $\theta_0 \in S$, the posterior distribution $\Pi_N(\cdot|D_N)$ for the inverse problem defined by \eqref{eq:inverse_problem} and \eqref{eq:def_G_ode_ivp} satisfies
	\begin{equation}
		\label{eq:posterior_contraction_L2_ode_ivp}
		P_{\theta_0}^N \left(\Pi_N \Big(\{\theta \in \Theta_N(M): \norm{\theta - \theta_0}_{L^2_{\mu_X}(\X,\calP)} \le L^\prime m C_U^{-1/2}\delta_N \}\Big\vert D_N\Big) \le 1 - e^{-bN\delta_N^2}\right) \le 4\frac{U^2 + 1}{N\delta_N^2},
	\end{equation}
	for all $N \ge N_0$, where $\delta_N \coloneqq N^{-\alpha/(2\alpha + d_x)}$ for all $N \in \N$, $U$ and $L^\prime$ are defined by \eqref{eq:U_L_ode_ivp}, and $C_U$ by \eqref{eq_C_of_U_function}.
\end{theorem}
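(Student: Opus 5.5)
The plan is to obtain the statement as a direct specialisation of \Cref{theorem_uniform_posterior_contraction_L2}, using \Cref{ode_ivp_satisfies_ip_assumptions} to verify the assumptions on the forward map. First, I invoke \Cref{ode_ivp_satisfies_ip_assumptions}. Its hypotheses (the three ODE assumptions, bounded smooth $\X$, bounded density $p_X$, $\calR\hookrightarrow L^\infty(\X,\calP)$, and $d_y\ge d_p=2\dint$) are exactly those assumed here. It gives \Cref{assumption_Nickl_Condition_2_1_1} and \Cref{assumption_local_Lipschitz_continuity_of_restricted_inverse_forward_map} with the following substitutions: $\kappa\leftarrow0$, $\eta\leftarrow1$, $\calZ\leftarrow\X$, $\zeta\leftarrow\lambda\leftarrow\mu_X$, $W\leftarrow\R^{d_p}$, $V\leftarrow\R^{d_y}$, and $U,L,L'$ as in \eqref{eq:U_L_ode_ivp}. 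With $\kappa=0$ and $d_z=d_x$, the rate \eqref{eq_Nickl_2_19} becomes $\delta_N=N^{-\alpha/(2\alpha+d_x)}$, which matches the statement.

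Second, I check the remaining hypotheses of \Cref{theorem_Nickl_2_2_2}. \Cref{assumption_Nickl_Condition_2_2_1} is assumed directly. Since $\kappa=0<1/2$, the embedding condition \eqref{eq:RKHS_Sob_emb} only requires $\calH\hookrightarrow H^\alpha(\X)$, which is assumed. The priors $\Pi_N$ are as in \Cref{definition_rescaled_Gaussian_priors}.

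The one point needing care is the requirement in \Cref{theorem_Nickl_2_2_2} and \Cref{theorem_uniform_posterior_contraction_L2} that $\theta_0\in\calH\cap\calR$ and $S\subset\calR\cap\calH$, whereas here only $S\subset\calH$ is given. I would argue that $\calH\subseteq\calR$. This holds because $\Pi'(\calR)=1$ and, by the remark after \Cref{assumption_Nickl_Condition_2_2_1}, the RKHS of a Gaussian measure concentrated on a separable Banach space is continuously embedded in it. If $\calR$ is only a separable normed space, I would instead use the first claim of \Cref{proposition_existence_of_uniform_U_and_uniform_A}, which gives $\calR$-boundedness of $\calH$-bounded sets, as the paper itself does. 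I expect this to be the only mildly delicate step.

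Third, I apply \Cref{theorem_uniform_posterior_contraction_L2}. Since \Cref{assumption_local_Lipschitz_continuity_of_restricted_inverse_forward_map} holds with $\eta=1$, its conclusion \eqref{eq_Nickl_2_26} gives the bound $4(U^2+1)/(N\delta_N^2)$ for the event $\|\theta-\theta_0\|_{L^2_{\mu_X}}\le L'(mC_U^{-1/2}\delta_N)^1=L'mC_U^{-1/2}\delta_N$, uniformly over $\theta_0\in S$ and for all $N\ge N^*=:N_0$, with $M,m$ chosen large. This is exactly \eqref{eq:posterior_contraction_L2_ode_ivp}. Finally, note that $U=U(M)$ and $L'=L'(M)$ are the ones from \eqref{eq:U_L_ode_ivp} evaluated at the chosen $M$.
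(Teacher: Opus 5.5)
Your proposal is correct and follows essentially the same route as the paper. You invoke \Cref{ode_ivp_satisfies_ip_assumptions} to obtain \cref{assumption_Nickl_Condition_2_1_1,assumption_local_Lipschitz_continuity_of_restricted_inverse_forward_map} with $\kappa=0$ and $\eta=1$, observe that the prior conditions of \Cref{theorem_Nickl_2_2_2} reduce to $\calH\hookrightarrow H^\alpha$ because $\kappa<1/2$, get $\calR$-boundedness of $S$ from the RKHS embedding in \Cref{proposition_existence_of_uniform_U_and_uniform_A}, and read off \eqref{eq:posterior_contraction_L2_ode_ivp} from \eqref{eq_Nickl_2_26}. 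You are a little more careful than the paper in flagging $S\subset\calH\cap\calR$; note that the first claim of \Cref{proposition_existence_of_uniform_U_and_uniform_A} already assumes $S\subset\calR$, so that containment really rests on $\calH\hookrightarrow\calR$.
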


\begin{remark}
\label{remark_Whittle_Matern_prior_L2_ode_ivp}
A specific example of a Gaussian base prior $\Pi^\prime $ that satisfies the hypotheses of \Cref{posterior_contraction_L2_ode_ivp} is given by the law of a Whittle--Mat\'{e}rn process with RKHS $\calH=H^\alpha(\X,\calP)$ for some $\alpha>d_x$. 
Given the hypotheses on $\X$ and $\mu_X$ in \Cref{posterior_contraction_L2_ode_ivp},  we may apply \cref{Whittle_Matern_prior} to conclude that a Whittle--Mat\'{e}rn process with this regularity exists.  
\end{remark}

The posterior contraction rate $\delta_N=N^{-\alpha/(2\alpha + d_x)}$ established in \cref{posterior_contraction_L2_ode_ivp} is the same as the posterior contraction rate in \cite[Theorem 4.2]{Lie:2024}, with the difference that \eqref{eq:posterior_contraction_L2_ode_ivp} holds uniformly for every $\theta_0$ from the set $S$.
The probability decay rate in \eqref{eq:posterior_contraction_L2_ode_ivp} is
\[ \left(N\delta_N^2\right)^{-1} = N^{-\frac{d_x}{2\alpha + d_x}}. \]

\begin{proof}[Proof of \Cref{posterior_contraction_L2_ode_ivp}]
	Under the stated hypotheses, we may apply \cref{ode_ivp_satisfies_ip_assumptions} to conclude that $G$ as defined in \eqref{eq:def_G_ode_ivp} satisfies \cref{assumption_Nickl_Condition_2_1_1,assumption_local_Lipschitz_continuity_of_restricted_inverse_forward_map} with $\calZ\leftarrow \X$, $\zeta\leftarrow \mu_X$, $\lambda\leftarrow \mu_X$, $V\leftarrow \R^{d_y}$, $W\leftarrow \R^{d_p}$, $\kappa\leftarrow 0$, $\eta\leftarrow 1$, and $U$ and $L^\prime$ as in \eqref{eq:U_L_ode_ivp}. The stated hypotheses on the base prior $\Pi^\prime $ coincide with the hypotheses on the base prior $\Pi^\prime $ stated in \Cref{theorem_Nickl_2_2_2} since $\kappa=0$; cf. \eqref{eq:RKHS_Sob_emb}. 
	Thus, all the hypotheses of \Cref{theorem_Nickl_2_2_2} hold.
	By the hypothesis in \Cref{posterior_contraction_L2_ode_ivp} that $S$ is bounded in the $\calH$ norm, it follows from \Cref{proposition_existence_of_uniform_U_and_uniform_A} that $S$ is also bounded in the $\calR$ norm. 
	Hence, all the hypotheses of \Cref{theorem_uniform_posterior_contraction_L2} hold, and by applying its conclusion \eqref{eq_Nickl_2_26} with $\eta\leftarrow 1$ and with $U$ and $L^\prime$ as in \eqref{eq:U_L_ode_ivp}, we obtain the desired conclusion \eqref{eq:posterior_contraction_L2_ode_ivp}.
\end{proof}

\begin{corollary}[$L^2_{\mu_X}$ distinguishability]
	\label{distinguishability_L2_ode_ivp}
	Suppose the hypotheses of \cref{posterior_contraction_L2_ode_ivp} hold. Then, for $M$, $m$, and $N_0\in\N$ as in \Cref{posterior_contraction_L2_ode_ivp}, for $U$ and $L^\prime$ as in \eqref{eq:U_L_ode_ivp}, and $C_U$ as in \eqref{eq_C_of_U_function}, the type $1$ and type $2$ errors of the plug-in tests $\Psi_N = \mathbf{1}_{T_N > t_N}$ with
	\[ T_N \coloneqq \inf_{h \in H_0} \norm{\Exp^{\Pi_N}[\theta|D_N] - h}_{L^2_{\mu_X}(\X,\calP)},
		\qquad
		t_N \coloneqq L^\prime (m C_U^{-1/2})\delta_N, \]
	and $\delta_N \coloneqq N^{-\alpha/(2\alpha + d_x)}$ for testing $H_0\subseteq S$ against
	\[ H_{1,N} \coloneqq \left\{ \theta_0 \in S: \inf_{h \in H_0} \norm{\theta_0 - h}_{L^2_{\mu_X}(\X,\calP)} \ge 2L^\prime (m C_U^{-1/2})\delta_N \right\} \]
	satisfy
	\[ 	\sup_{\theta_0 \in H_0} P_{\theta_0}^N(\Psi_N = 1) \le \frac{6 (U^2+1)}{N\delta_N^2}
		\qquad \text{and} \qquad
		\sup_{\theta_0 \in H_{1,N}} P_{\theta_0}^N(\Psi_N = 0) \le \frac{6(U^2+1)}{N\delta_N^2} \]	
	for all $N \ge N_0$.
\end{corollary}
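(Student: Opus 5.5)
The plan is to reduce the statement to \Cref{corollary_L2_distinguishability_posterior_mean}, in exactly the way \Cref{posterior_contraction_L2_ode_ivp} was reduced to \Cref{theorem_uniform_posterior_contraction_L2}.

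First, I will check that all hypotheses of \Cref{theorem_uniform_posterior_contraction_L2} hold for the ODE-IVP problem. These hypotheses are the same as those verified in the proof of \Cref{posterior_contraction_L2_ode_ivp}, and that verification will be reused.
\begin{enumerate}
\item By \Cref{ode_ivp_satisfies_ip_assumptions}, $G$ from \eqref{eq:def_G_ode_ivp} satisfies \cref{assumption_Nickl_Condition_2_1_1,assumption_local_Lipschitz_continuity_of_restricted_inverse_forward_map}. This holds with $\calZ\leftarrow\X$, $\zeta\leftarrow\mu_X$, $\lambda\leftarrow\mu_X$, $\kappa\leftarrow 0$ and $\eta\leftarrow 1$, and with $U$ and $L^\prime$ as in \eqref{eq:U_L_ode_ivp}.
\item Since $\kappa=0$, the prior hypotheses coincide with \eqref{eq:RKHS_Sob_emb}.
\item Boundedness of $S$ in $\calH$ gives boundedness in $\calR$ by \Cref{proposition_existence_of_uniform_U_and_uniform_A}.
\end{enumerate}
With $\kappa=0$ and $d_z=d_x$, the sequence \eqref{eq_Nickl_2_19} becomes $\delta_N=N^{-\alpha/(2\alpha+d_x)}$, and $\delta_N^\eta=\delta_N$.

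Next, I will apply \Cref{corollary_L2_distinguishability_posterior_mean} with $\eta=1$. That corollary gives the stated bounds $6(U^2+1)/(N\delta_N^2)$ for both error types, with $t_N=L^\prime(mC_U^{-1/2})\delta_N$ and alternatives separated in $L^2_{\mu_X}$ by $2t_N$. The alternative in \Cref{distinguishability_L2_ode_ivp} is defined with equality (``$\coloneqq$''), whereas the corollary allows any subset. This is harmless, since the set itself is a subset.

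The only point needing care is the bookkeeping of constants. I need to ensure that $M$, $m$ and $N_0$ can be taken to be the ones produced in \Cref{posterior_contraction_L2_ode_ivp}, i.e.\ the $M$, $m$ and $N^*$ of \Cref{theorem_uniform_posterior_contraction_L2}. I also need $U=U(M)$ and $L^\prime=L^\prime(M)$ to be the concrete expressions \eqref{eq:U_L_ode_ivp}.

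I expect the main (minor) obstacle to be the step from contraction to the posterior mean. Inside \Cref{corollary_uniform_L2_convergence_posterior_mean}, the uniform prior-mass condition \eqref{eq_uniform_constant_A} must hold with the same envelope $U(M)$. \Cref{proposition_existence_of_uniform_U_and_uniform_A} supplies this once $M$ exceeds $\sup_{\theta_0\in S}\norm{\theta_0}_\calR$ by a positive margin $M_0$. That margin is permitted by the phrase ``sufficiently large $M$''. The threshold may require enlarging $N_0$, which is consistent with the statement, since the bound in \Cref{corollary_uniform_L2_convergence_posterior_mean} holds for $N\ge N^*$ with the same $N^*$.
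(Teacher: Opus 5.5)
Your proposal is correct and matches the paper's proof: like the paper, you reuse the verification from the proof of \Cref{posterior_contraction_L2_ode_ivp} that the hypotheses of \Cref{theorem_uniform_posterior_contraction_L2} hold with $\calZ\leftarrow\X$, $\zeta,\lambda\leftarrow\mu_X$, $\kappa\leftarrow 0$, $\eta\leftarrow 1$ and $U$, $L^\prime$ from \eqref{eq:U_L_ode_ivp}, and then apply \Cref{corollary_L2_distinguishability_posterior_mean} directly. Your extra bookkeeping about the margin $M_0$ and the threshold $N_0$ is already handled inside \Cref{corollary_uniform_L2_convergence_posterior_mean} and \Cref{proposition_existence_of_uniform_U_and_uniform_A}, so nothing further is needed.
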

If the Lebesgue density $p_X$ of $\mu_X$ has a strictly positive essential infimum, then we can obtain $L^2$ distinguishability from the $L^2_{\mu_X}$ distinguishability statement of \Cref{distinguishability_L2_ode_ivp}; see \Cref{remark_uniform_L2_convergence_posterior_mean}.

\begin{proof}[Proof of \Cref{distinguishability_L2_ode_ivp}]
In the proof of \Cref{posterior_contraction_L2_ode_ivp}, we showed that the hypotheses of \Cref{theorem_uniform_posterior_contraction_L2} hold with $\calZ\leftarrow \X$, $\zeta\leftarrow \mu_X$, $\lambda\leftarrow \mu_X$, $V\leftarrow \R^{d_y}$, $W\leftarrow \R^{d_p}$, $\kappa\leftarrow 0$, $\eta\leftarrow 1$, and $U$ and $L^\prime$ as in \eqref{eq:U_L_ode_ivp}.
Thus, we may apply \Cref{corollary_L2_distinguishability_posterior_mean} with the same choices to obtain the desired conclusion.
\end{proof}

\begin{remark}
\label{rem:limit_rate_alpha}
If $H_0$ is a class of infinitely smooth functions that is bounded in $H^\alpha(\X)$ for every $\alpha > 0$, then $\alpha$ can be chosen arbitrarily large in \cref{distinguishability_L2_ode_ivp}. Given the rate $\delta_N\coloneqq N^{-\alpha/(2\alpha+d_x)}$ from \Cref{distinguishability_L2_ode_ivp}, this observation implies $L^2_{\mu_X}$ distinguishability at any rate $\delta_N = N^{-\gamma}$ with $\gamma \in (\frac13,\frac12)$ arbitrarily close to the `parametric rate' of $\frac12$.
This observation is relevant for goodness-of-fit testing for parametric classes of covariate-parameter relationships in pharmacokinetics. Recall the class \eqref{eq:def_Theta0_sat_exp} of exponential functions and the class \eqref{eq:def_Theta0_aff_lin} of affine linear functions $\theta_\tau$ that were described in \cref{sec:cov_mech_mod}, and recall that the elements in each class are parametrised by a finite-dimensional nuisance parameter $\tau$. If the classes are defined by values of $\tau$ that belong to a suitably bounded set $\T$, then by the preceding observation, one obtains $L^2_{\mu_X}$ distinguishability for these classes at an almost parametric rate.
\end{remark}

\subsection{Uniform posterior contraction and distinguishability in Sobolev and supremum norms}

In this section, we apply the results from \cref{sec:nonlin_ip} to prove posterior contraction and distinguishability in certain Sobolev  metrics and in the metric induced by the supremum norm, for the inverse problem defined by \eqref{eq:inverse_problem} and \eqref{eq:def_G_ode_ivp}.

\begin{theorem}[Uniform Sobolev and supremum norm posterior contraction]
	\label{posterior_contraction_Linfty_ode_ivp}
	Suppose that the ODE-IVP \eqref{eq:ode_ivp} satisfies \cref{Lie_assumption_2_1,Lie_assumption_2_3,assumptions_coefficient_map}, that $\mu_X$ has a bounded Lebesgue density with strictly positive essential infimum over $\X$, and that $d_y \ge d_p = 2\dint$.
	Let the base prior distribution $\Pi^\prime $ satisfy \cref{assumption_Nickl_Condition_2_2_1}, let its RKHS $\calH$ be continuously embedded in $H^\alpha(\X,\calP)$ for some $\alpha > 0$, and let $\calR$ be continuously embedded in $H^\beta(\X,\calP)$ for some $\beta > d_x/2$. Moreover, let $S \subset \calH$ be bounded in $\calH$ norm. Then, for every $b > 0$, $\beta^\prime \in [0,\beta)$, and sufficiently large $M > 0$, there exists $C > 0$ and $N_0 \in \N$ such that for every $\theta_0 \in S$ and $N \ge N_0$, the posterior distribution $\Pi_N(\cdot|D_N)$ for the inverse problem defined by \eqref{eq:inverse_problem} and \eqref{eq:def_G_ode_ivp} satisfies
	\begin{equation*}
		P_{\theta_0}^N \left(\Pi_N \Big(\{\theta \in \Theta: \norm{\theta}_\calR \le M, \norm{\theta - \theta_0}_{H^{\beta^\prime}} \le C\delta_N^{(\beta - \beta')/\beta} \}\Big\vert D_N\Big) \le 1 - e^{-bN\delta_N^2}\right) \le \frac{4(U^2 + 1)}{N\delta_N^2},
	\end{equation*}
	where $\delta_N \coloneqq N^{-\alpha/(2\alpha + d_x)}$ for all $N \in \N$ and $U$ is given by \eqref{eq:U_L_ode_ivp}.
	In particular, for every $b > 0$, $\beta^\prime \in (d_x/2,\beta)$, and sufficiently large $M > 0$, there exists $C^\prime > 0$ and $N_0 \in \N$ such that for every $\theta_0 \in S$ and $N \ge N_0$,
	\begin{equation*}
		P_{\theta_0}^N \left(\Pi_N \Big(\{\theta \in \Theta: \norm{\theta}_\calR \le M, \norm{\theta - \theta_0}_{\infty} \le C^\prime\delta_N^{(\beta - \beta')/\beta}\} \Big\vert D_N\Big) \le 1 - e^{-bN\delta_N^2}\right) \le \frac{4(U^2 + 1)}{N\delta_N^2}.
	\end{equation*}
\end{theorem}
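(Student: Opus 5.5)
The proof will mirror that of \Cref{posterior_contraction_L2_ode_ivp}. We reduce the claim to \Cref{theorem_uniform_posterior_contraction_Linfty} applied with $\calZ\leftarrow\X$, $\zeta\leftarrow\mu_X$, $\lambda\leftarrow\mu_X$, $V\leftarrow\R^{d_y}$, $W\leftarrow\R^{d_p}$, $\kappa\leftarrow 0$ and $\eta\leftarrow 1$. With $\kappa=0$ and $d_z=d_x$, the rate \eqref{eq_Nickl_2_19} becomes $\delta_N=N^{-\alpha/(2\alpha+d_x)}$. The exponent $\eta(\beta-\beta')/\beta$ then reduces to $(\beta-\beta')/\beta$, which gives both displays of the statement once the hypotheses are checked.

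\textbf{Forward-model assumptions.} \Cref{ode_ivp_satisfies_ip_assumptions} requires $\calR\hookrightarrow L^\infty(\X,\calP)$. This follows from the hypothesis $\calR\hookrightarrow H^\beta(\X,\calP)$ with $\beta>d_x/2$ together with the Sobolev embedding $\norm{\cdot}_\infty\lesssim\norm{\cdot}_{H^\beta}$, cf. \eqref{eq_Nickl_A_4}, on the bounded smooth domain $\X$. Since $\mu_X$ has a bounded Lebesgue density, \Cref{ode_ivp_satisfies_ip_assumptions} then yields \Cref{assumption_Nickl_Condition_2_1_1,assumption_local_Lipschitz_continuity_of_restricted_inverse_forward_map} with $\kappa=0$, $\eta=1$, and $U,L,L'$ as in \eqref{eq:U_L_ode_ivp}.

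\textbf{Prior and contraction hypotheses.} Because $\kappa=0<1/2$, the embedding condition \eqref{eq:RKHS_Sob_emb} only asks for $\calH\hookrightarrow H^\alpha(\X)$, which is assumed. So all hypotheses of \Cref{theorem_Nickl_2_2_2} hold. $S$ is bounded in the $\calH$ norm, and by \Cref{proposition_existence_of_uniform_U_and_uniform_A} it is also bounded in the $\calR$ norm. Hence the hypotheses of \Cref{theorem_uniform_posterior_contraction_L2} hold, and for every $b>0$ and suitably large $M$ and $m$ we obtain \eqref{eq_Nickl_2_26} for all $N\ge N^*$.

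\textbf{Applying the Sobolev result.} The remaining requirements of \Cref{theorem_uniform_posterior_contraction_Linfty} are that $\calR\hookrightarrow H^\beta$ with $\beta>d_x/2$ (assumed) and that $\zeta=\mu_X$ has a density with strictly positive essential infimum (assumed). Invoking \Cref{theorem_uniform_posterior_contraction_Linfty} with the same $b$ and $M$ gives, for each $\beta'\in[0,\beta)$, a constant $C$ such that \eqref{eq_Sobolev_posterior_contraction_uniform_version} holds. For $\beta'\in(d_x/2,\beta)$ it also gives a constant $C'$ such that \eqref{eq_Linfty_posterior_contraction_uniform_version} holds. In both cases the bound is $4(U^2+1)/(N\delta_N^2)$ for all $N\ge N_0\coloneqq N^*$, uniformly over $\theta_0\in S$, with $U$ as in \eqref{eq:U_L_ode_ivp}.

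\textbf{Main obstacle.} The only nonroutine point is the bookkeeping of $U$. The $U$ in \eqref{eq:U_L_ode_ivp} is $U(M)$ for the same $M$ that \Cref{theorem_uniform_posterior_contraction_L2} enlarges, and it must stay consistent through the chain of results. The $L^\infty$ embedding of $\calR$ also has to be verified before \Cref{ode_ivp_satisfies_ip_assumptions} is invoked.
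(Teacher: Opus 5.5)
Your proposal is correct and follows the paper's proof. The paper likewise reduces to the hypotheses of \Cref{posterior_contraction_L2_ode_ivp} (hence those of \Cref{theorem_uniform_posterior_contraction_L2} with $\kappa=0$, $\eta=1$), notes that the two extra assumptions are exactly what \Cref{theorem_uniform_posterior_contraction_Linfty} adds, and reads off both displays. Your explicit check that $\calR\hookrightarrow L^\infty(\X,\calP)$ via the Sobolev embedding for $\beta>d_x/2$ fills in a step the paper leaves implicit when it calls the hypotheses ``strictly stronger''.
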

In \Cref{remark_Whittle_Matern_prior_L2_ode_ivp}, we showed that there exist Whittle--Mat\'{e}rn Gaussian processes whose laws yield base priors $\Pi^\prime$ that satisfy the hypotheses of \Cref{posterior_contraction_L2_ode_ivp}. 
Similarly, under the hypotheses on $\X$ and $\mu_X$ stated in \Cref{posterior_contraction_Linfty_ode_ivp}, there exists by \Cref{Whittle_Matern_prior} a Whittle--Mat\'{e}rn Gaussian process whose law $\Pi^\prime$ satisfies $\Pi^\prime(H^\beta(\X,\calP))=1$ for any $\beta\in [0,\alpha-d_x/2)$, such that its RKHS $\calH$ satisfies \eqref{eq:RKHS_Sob_emb}. In particular, by the condition $\alpha>d_x$ stated in \Cref{posterior_contraction_Linfty_ode_ivp}, it follows that $(d_x/2,\alpha-d_x/2)$ is nonempty, and hence the condition that $\calR$ is continuously embedded in $H^\beta(\X,\calP)$ for $\beta >d_x/2$ can be satisfied.

\begin{proof}[Proof of \Cref{posterior_contraction_Linfty_ode_ivp}]
The hypotheses of \Cref{posterior_contraction_Linfty_ode_ivp} are strictly stronger than the hypotheses of \Cref{posterior_contraction_L2_ode_ivp}. 
As shown in the proof of \Cref{posterior_contraction_L2_ode_ivp}, the hypotheses of \Cref{posterior_contraction_L2_ode_ivp} imply that all the hypotheses of \Cref{theorem_uniform_posterior_contraction_L2} hold, with the substitutions $\calZ\leftarrow \X$, $V\leftarrow \R^{d_y}$, $W\leftarrow \R^{d_p}$, $\kappa\leftarrow 0$, $\eta\leftarrow 1$, and $U$ and $L^\prime$ as in \eqref{eq:U_L_ode_ivp}. 
The differences of the hypotheses of \Cref{posterior_contraction_Linfty_ode_ivp} with respect to the hypotheses of \Cref{posterior_contraction_L2_ode_ivp} are that the Lebesgue density $p_X$ of $\mu_X$ has strictly positive essential infimum, and that the regularisation space $\calR$ is assumed to be continuously embedded in $H^\beta(\X,\calP)$ for some $\beta>d_x/2$.
These are precisely the differences of the hypotheses of \Cref{theorem_uniform_posterior_contraction_Linfty} with respect to the hypotheses of \Cref{theorem_uniform_posterior_contraction_L2}.
Hence, all the hypotheses of \Cref{theorem_uniform_posterior_contraction_Linfty} hold, and the desired inequalities follow from \eqref{eq_Sobolev_posterior_contraction_uniform_version} and \eqref{eq_Linfty_posterior_contraction_uniform_version}.
\end{proof}

\begin{corollary}[Sobolev and supremum norm distinguishability]
	\label{distinguishability_Linfty_ode_ivp}
Suppose that the hypotheses of \Cref{posterior_contraction_Linfty_ode_ivp} hold, and suppose that either
	\begin{enumerate}   
	 \item \label{item_Sobolev_distinguishability_posterior_mean_ode_ivp} $\mathbb{B} = H^{\beta^\prime}(\X,\calP)$ for some $\beta^\prime \in [0,\beta)$, or
	 \item \label{item_Linfty_distinguishability_posterior_mean_ode_ivp} $\mathbb{B} = L^\infty(\X,\calP)$ with $\beta^\prime \in (d_x/2, \beta)$.
	\end{enumerate}
	Then, there exist $K, K^\prime > 0$ and $N_0 \in \N$ such that the type $1$ and type $2$ errors of the plug-in tests $\Psi_N = \mathbf{1}_{T_N > t_N}$ with
	\[ T_N \coloneqq \inf_{h \in H_0} \norm{\Exp^{\Pi_N}[\theta|D_N] - h}_{\mathbb{B}},
		\qquad
		t_N \coloneqq K\delta_N^{(\beta - \beta')/\beta}, \]
	and $\delta_N \coloneqq N^{-\alpha/(2\alpha + d_x)}$ for testing $H_0\subseteq S$ against
	\[ H_{1,N} \coloneqq \left\{ \theta_0 \in S: \inf_{h \in H_0} \norm{\theta_0 - h}_{\mathbb{B}} \ge 2K\delta_N^{(\beta - \beta')/\beta} \right\} \]
	satisfy
	\[ 	\sup_{\theta_0 \in H_0} P_{\theta_0}^N(\Psi_N = 1) \le \frac{K^\prime}{N\delta_N^2}
		\qquad \text{and} \qquad
		\sup_{\theta_0 \in H_{1,N}} P_{\theta_0}^N(\Psi_N = 0) \le \frac{K^\prime}{N\delta_N^2} \]	
	for all $N \ge N_0$.
\end{corollary}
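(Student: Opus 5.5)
The plan mirrors the proof of \Cref{distinguishability_L2_ode_ivp}: reduce the ODE-IVP statement to the general result \Cref{corollary_Linfty_distinguishability_posterior_mean}, then read off the constants.

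The first step is to verify the hypotheses of \Cref{theorem_uniform_posterior_contraction_Linfty} in the ODE setting. This was already done in the proof of \Cref{posterior_contraction_Linfty_ode_ivp}, with the substitutions $\calZ\leftarrow\X$, $\zeta\leftarrow\mu_X$, $\lambda\leftarrow\mu_X$, $V\leftarrow\R^{d_y}$, $W\leftarrow\R^{d_p}$, $\kappa\leftarrow 0$ and $\eta\leftarrow 1$, and with $U$ and $L^\prime$ as in \eqref{eq:U_L_ode_ivp}. The ingredients are as follows:
\begin{itemize}
\item \Cref{ode_ivp_satisfies_ip_assumptions} supplies \Cref{assumption_Nickl_Condition_2_1_1,assumption_local_Lipschitz_continuity_of_restricted_inverse_forward_map}. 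Its hypothesis that $\calR\hookrightarrow L^\infty(\X,\calP)$ follows from $\calR\hookrightarrow H^\beta$ with $\beta>d_x/2$ and the Sobolev embedding.
\item The prior hypotheses match \Cref{theorem_Nickl_2_2_2}, because $\kappa=0$.
\item $S$ is bounded in $\calH$.
\item $p_X$ has strictly positive essential infimum.
\end{itemize}
Since $\kappa=0$ and $d_z=d_x$, the rate \eqref{eq_Nickl_2_19} becomes $\delta_N=N^{-\alpha/(2\alpha+d_x)}$. With $\eta=1$, the exponent $\eta(\beta-\beta^\prime)/\beta$ becomes $(\beta-\beta^\prime)/\beta$.

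Next I apply \Cref{corollary_Linfty_distinguishability_posterior_mean} in its two cases.
\begin{itemize}
\item In case \ref{item_Sobolev_distinguishability_posterior_mean_ode_ivp}, take $\mathbb{B}=H^{\beta^\prime}(\X,\calP)$ with $\beta^\prime\in[0,\beta)$. This corresponds to item \ref{item_Sobolev_distinguishability_posterior_mean} of \Cref{corollary_Linfty_distinguishability_posterior_mean}.
\item In case \ref{item_Linfty_distinguishability_posterior_mean_ode_ivp}, take $\mathbb{B}=L^\infty(\X,\calP)$ with $\beta^\prime\in(d_x/2,\beta)$. This corresponds to item \ref{item_Linfty_distinguishability_posterior_mean}.
\end{itemize}
In each case the corollary yields $K,K^\prime>0$ and $N_0$ from \Cref{corollary_uniform_Sobolev_convergence_posterior_mean}. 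The resulting type 1 and type 2 error bounds $K^\prime/(N\delta_N^2)$ hold for the tests with $t_N=K\delta_N^{(\beta-\beta^\prime)/\beta}$.

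The last step concerns the alternative. Here $H_{1,N}$ is defined with equality as the full separated set, whereas \Cref{corollary_Linfty_distinguishability_posterior_mean} allows any subset of that set. Taking the full set is therefore permitted, and the conclusion follows.

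The main point to check is only bookkeeping: the Sobolev case includes $\beta^\prime=0$, while \Cref{corollary_uniform_Sobolev_convergence_posterior_mean} is stated for $\beta^\prime\in(0,\beta)$. For $\beta^\prime=0$ I would instead invoke \eqref{eq_Sobolev_posterior_contraction_uniform_version} at $\beta^\prime=0$ together with \Cref{theorem_uniform_posterior_contraction_implies_uniform_convergence_posterior_mean} with $\mathbb{B}=L^2$. Alternatively, I would rely on \Cref{corollary_Linfty_distinguishability_posterior_mean} as stated, since it already covers $[0,\beta)$.
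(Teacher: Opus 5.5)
Your proposal is correct and takes the same route as the paper. The paper likewise verifies the hypotheses of \Cref{theorem_uniform_posterior_contraction_Linfty} by pointing to the proof of \Cref{posterior_contraction_Linfty_ode_ivp}, with $\calZ\leftarrow\X$, $\zeta,\lambda\leftarrow\mu_X$, $\kappa\leftarrow 0$ and $\eta\leftarrow 1$, and then reads off the conclusion from \Cref{corollary_Linfty_distinguishability_posterior_mean}. Your remark on $\beta^\prime=0$ correctly flags a range mismatch between \Cref{corollary_uniform_Sobolev_convergence_posterior_mean} and \Cref{corollary_Linfty_distinguishability_posterior_mean}, and your patch works: apply \eqref{eq_Sobolev_posterior_contraction_uniform_version} at $\beta^\prime=0$ together with \Cref{theorem_uniform_posterior_contraction_implies_uniform_convergence_posterior_mean} for $\mathbb{B}=L^2$.
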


\begin{proof}[Proof of \Cref{distinguishability_Linfty_ode_ivp}]
As shown in the proof of \Cref{posterior_contraction_Linfty_ode_ivp}, the hypotheses of \Cref{posterior_contraction_Linfty_ode_ivp} imply that the hypotheses of \Cref{theorem_uniform_posterior_contraction_Linfty} hold with the substitutions $\calZ\leftarrow \X$, $V\leftarrow \R^{d_y}$, $W\leftarrow \R^{d_p}$, $\kappa\leftarrow 0$, $\eta\leftarrow 1$, and $U$ and $L^\prime$ as in \eqref{eq:U_L_ode_ivp}. 
Thus, the desired conclusion follows from \Cref{corollary_Linfty_distinguishability_posterior_mean}. 
\end{proof}

By choosing $\beta \in (d_x/2, \alpha - d_x/2)$ arbitrarily close to $\alpha - d_x/2$ and $\beta^\prime \in (d_x/2, \beta)$ arbitrarily close to $d_x/2$ in \cref{distinguishability_Linfty_ode_ivp}, supremum norm distinguishability at the rate $\delta_N = N^{-\gamma}$ can be achieved with $\gamma \in (0, \gamma_0)$ arbitrarily close to
\[ \gamma_0 = N^{-\frac{\alpha}{2\alpha + d_x}\cdot\frac{\alpha - d_x}{\alpha - d_x/2}}
	= N^{-\frac{2\alpha(\alpha - d_x)}{(2\alpha + d_x)(2\alpha - d_x)}}. \]
Here, $\gamma_0$ also approaches the parametric rate $\frac12$ as the prior smoothness parameter $\alpha$ tends to $\infty$, cf.~\cref{rem:limit_rate_alpha}.

\section{Conclusion and outlook}
\label{sec:conclusion}

In this work, we have established sufficient conditions for distinguishability with respect to the $L^2_\zeta$, Sobolev, and supremum norms, in the context of nonparametric goodness-of-fit testing for nonlinear inverse problems with random observations.
These results yield upper bounds on the corresponding minimax separation rates in the same setting.
In order to obtain these results, we proved in \Cref{sec:nonlin_ip} the corresponding posterior contraction rates for suitably regular Gaussian priors, in which the posterior contraction holds uniformly for sets of true parameters that are bounded in the corresponding RKHS norms.
In \Cref{sec:app_ode_ivp}, we applied our results to inverse problems governed by parametrised ODE-IVPs, and derived posterior contraction rates and conditions for distinguishability for this class of inverse problems.
The distinguishability results use tools that we developed to obtain distinguishability for infimum plug-in tests based upon a freely chosen uniformly convergent estimator in a general setting; see \Cref{sec:dist_plug_in}.
All these results are quantitative and non-asymptotic in nature and yield explicit bounds on the errors of the considered tests.

The plug-in tests that we use cannot provide minimax distinguishability for separation rates that are faster than the minimax rate for estimation. On the other hand, it is known that the minimax rate for testing may be lower in a nonparametric setting; see e.g. \cite[Section 2.10]{IngSus:2003}.
In order to further bound the minimax separation rate in the considered setting, one could study infimum $\chi^2$- and $U$-statistic tests, which are known to be minimax optimal in certain cases, and compare their performance to that of the infimum plug-in tests we have presented here.

\section*{Acknowledgements}

We thank Alexandra Carpentier and Niklas Hartung (University of Potsdam), Abhishake (Lappeenranta University of Technology), Markus Rei\ss~ (Humboldt University, Berlin), and Maximilian Siebel (University of Heidelberg) for helpful comments.
The research of the authors has been partially funded by the Deutsche Forschungsgemeinschaft (DFG) --- \href{https://gepris.dfg.de/gepris/projekt/318763901}{Project-ID 318763901} --- SFB1294 ``Data Assimilation''.

\bibliography{references}

@book{IngSus:2003,
	author = {Ingster, Yu. I. and Suslina, Irina A.},
	title = {Nonparametric Goodness-of-Fit Testing Under Gaussian Models},
	publisher = {Springer New York, NY},
	year = 2003,
	series = {Lecture Notes in Statistics},
	doi = {10.1007/978-0-387-21580-8},
}

@article{Ing:1984,
	author = {Ingster, Yu. I.},
	title = {Asymptotically Optimal {B}ayes Tests for Composite Hypotheses},
	fjournal = {Theory of Probability \& Its Applications},
	journal = {Theory Probab. Appl.},
	volume = {28},
	number = {4},
	pages = {775--794},
	year = {1984},
	doi = {10.1137/1128075},
}

@article{Ing:1986,
	author = {Ingster, Yu. I.},
	title = {Asymptotically Optimal Tests for Verifying Composite Finite-Parameter Hypotheses},
	fjournal = {Theory of Probability \& Its Applications},
	journal = {Theory Probab. Appl.},
	volume = {30},
	number = {2},
	pages = {311--332},
	year = {1986},
	doi = {10.1137/1130038},
}

@article{IngSapSus:2012,
	author = {Yuri I. Ingster and Theofanis Sapatinas and Irina A. Suslina},
	title = {Minimax signal detection in ill-posed inverse problems},
	volume = {40},
	fjournal = {The Annals of Statistics},
	journal = {Ann. Statist.},
	number = {3},
	pages = {1524--1549},
	year = {2012},
	doi = {10.1214/12-AOS1011},
}

@article{MarSap:2017,
	author = {Cl\'{e}ment Marteau and Theofanis Sapatinas},
	title = {Minimax goodness-of-fit testing in ill-posed inverse problems with partially unknown operators},
	volume = {53},
	fjournal = {Annales de l'Institut Henri Poincar\'{e}, Probabilit\'{e}s et Statistiques},
	journal = {Ann. Inst. Henri Poincar\'{e} Probab. Stat.},
	number = {4},
	publisher = {Institut Henri Poincar\'{e}},
	pages = {1675--1718},
	year = {2017},
	doi = {10.1214/16-AIHP768},
}

@article{Ray:2013,
	author = {Kolyan Ray},
	title = {{B}ayesian inverse problems with non-conjugate priors},
	volume = {7},
	fjournal = {Electronic Journal of Statistics},
	journal = {Elec. J. Stat.},
	number = {none},
	pages = {2516--2549},
	year = {2013},
	doi = {10.1214/13-EJS851},
}

@article{GhoGhoVaa:2000,
	 author = {Subhashis Ghosal and Jayanta K. Ghosh and Aad W. van der Vaart},
	 fjournal = {The Annals of Statistics},
	 journal = {Ann. Statist.},
	 number = {2},
	 pages = {500--531},
	 publisher = {Institute of Mathematical Statistics},
	 title = {Convergence rates of posterior distributions},
	 volume = {28},
	 year = {2000},
	 doi = {10.1214/aos/1016218228},
}

@book{GinNic:2016,
	author = {Evarist Gin\'{e} and Richard Nickl},
	title = {Mathematical Foundations of Infinite-Dimensional Statistical Models},
	year = 2016,
	publisher = {Cambridge University Press},
	series = {Cambridge Series in Statistical and Probabilistic Mathematics},
	doi = {10.1017/CBO9781107337862},
}

@unpublished{AbhHelMue:2023,
	title = {Statistical inverse learning problems with random observations},
	author = {Abhishake and Tapio Helin and Nicole Mücke},
	year = 2023,
	eprint = {2312.15341},
	eprinttype = {arxiv},
	note = {Preprint},
}

@unpublished{Lie:2024,
	title = {{B}ayesian inference of covariate-parameter relationships for population modelling},
	author = {Han Cheng Lie},
	year = 2024,
	eprint = {2407.09640},
	eprinttype = {arxiv},
	note = {Preprint},
}

@book{GhoVaa:2017,
	title = {Fundamentals of Nonparametric Bayesian Inference},
	author = {Subhasis Ghosal and van der Vaart, Aad},
	year = 2017,
	publisher = {Cambridge University Press},
	series = {Cambridge Series in Statistical and Probabilistic Mathematics},
	doi = {10.1017/9781139029834},
}

@book{Nic:2023,
	title = {{B}ayesian Non-linear Statistical Inverse Problems},
	author = {Richard Nickl},
	year = 2023,
	publisher = {European Mathematical Society (EMS) Press},
	series = {Zurich Lectures in Advanced Mathematics},
	doi = {10.4171/ZLAM/30},
}

@article{Siebel:2024,
	title = {Convergence rates for the maximum a posteriori estimator in {PDE}-regression models with random design},
	author = {Siebel, Maximilian},
	fjournal = {SIAM/ASA Journal on Uncertainty Quantification},
	journal = {SIAM/ASA J. Uncertain. Quantif.},
	volume = {13},
	number = {4},
	pages = {1862-1903},
	year = {2025},
	doi = {10.1137/25M1744526},
}

@article{HarWahRasHui:2021,
	title = {Nonparametric goodness-of-fit testing for parametric covariate models in pharmacometric analyses},
	author = {Niklas Hartung and Martin Wahl and Abhishake Rastogi and Wilhelm Huisinga},
	year = 2021,
	doi = {https://doi.org/10.1002/psp4.12614},
	fjournal = {Pharmacometrics \& Systems Pharmacology},
	journal = {CPT Pharmacometrics Syst. Pharmacol.},
	volume = 10,
	number = 6,
	pages = {564--576},
}

@article{GiorNic:2020,
doi = {10.1088/1361-6420/ab7d2a},
year = {2020},
volume = {36},
number = {8},
pages = {085001},
author = {Giordano, Matteo and Nickl, Richard},
title = {Consistency of {B}ayesian inference with {G}aussian process priors in an elliptic inverse problem},
fjournal = {Inverse Problems},
journal = {Inverse Probl.}
}

@article{NicvandeGeerWang:2020,
author = {Nickl, Richard and van de Geer, Sara and Wang, Sven},
title = {Convergence Rates for Penalized Least Squares Estimators in {PDE} Constrained Regression Problems},
fjournal = {SIAM/ASA Journal on Uncertainty Quantification},
journal = {SIAM/ASA J. Uncertain. Quantif.},
volume = {8},
number = {1},
pages = {374-413},
year = {2020},
doi = {10.1137/18M1236137},
}

@article{Kekk:2022,
doi = {10.1088/1361-6420/ac4839},
year = {2022},
volume = {38},
number = {3},
pages = {035002},
author = {Kekkonen, Hanne},
title = {Consistency of {B}ayesian inference with {G}aussian process priors for a parabolic inverse problem},
fjournal = {Inverse Problems},
journal = {Inverse Probl.}
}

@article{MonNicPat:2021,
author = {Monard, Fran\c{c}ois and Nickl, Richard and Paternain, Gabriel P.},
title = {Consistent Inversion of Noisy Non-Abelian X-Ray Transforms},
fjournal = {Communications on Pure and Applied Mathematics},
journal = {Comm. Pure Appl. Math.},
volume = {74},
number = {5},
pages = {1045-1099},
doi = {https://doi.org/10.1002/cpa.21942},
year = {2021},
}

@article{vanderVaavanZan:2000,
author = {A. W. van der Vaart and J. H. van Zanten},
title = {{Rates of contraction of posterior distributions based on Gaussian process priors}},
volume = {36},
fjournal = {The Annals of Statistics},
journal = {Ann. Statist.},
number = {3},
pages = {1435 -- 1463},
year = {2008},
doi = {10.1214/009053607000000613},
}

\appendix

\section{Deferred proofs}
\label{sec_deferred_proofs}

Recall from \Cref{subsection_notation} the notation $V\hookrightarrow W$ for the continuous embedding of one normed vector space $V$ into another vector space $W$, i.e. the existence of some $C>0$ such that $\norm{v}_W\leq C\norm{v}_V$ for every $v\in V$.
Statement \cref{embedding_L2zeta_L2} of \Cref{lemma_embedding} is used in the proof of \cref{theorem_uniform_posterior_contraction_Linfty} and in \Cref{remark_uniform_L2_convergence_posterior_mean}.
Statement \cref{embedding_L2_L2zeta} is used in the proof of \cref{Whittle_Matern_prior} and \cref{ode_ivp_satisfies_ip_assumptions}.
\begin{lemma}
\label{lemma_embedding}
Let $\calZ \subset \R^{d_z}$ be a bounded domain with smooth boundary and $\zeta$ be a probability measure on $\calZ$ with Lebesgue density $p_\zeta$, and let $W$ be a normed vector space.
\begin{enumerate}
\item \label{embedding_L2zeta_L2}
	 If $\essinf_{z\in\calZ}p_\zeta(z)>0$, then $L^2_\zeta(\calZ,W)\hookrightarrow L^2(\calZ,W)$.
\item \label{embedding_L2_L2zeta}
If $\norm{p_\zeta}_{L^\infty}<\infty$, then $L^2(\calZ,W)\hookrightarrow L^2_\zeta(\calZ,W)$.
\end{enumerate}
\end{lemma}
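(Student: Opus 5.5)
Both statements follow from a pointwise comparison of $p_\zeta$ with a constant, integrated against $\norm{f(z)}_W^2$. Throughout, $f:\calZ\to W$ is measurable, and we use that $\di\zeta(z)=p_\zeta(z)\,\di z$ on $\calZ$.

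For statement \ref{embedding_L2zeta_L2}, set $c\coloneqq\essinf_{z\in\calZ}p_\zeta(z)>0$, so that $p_\zeta(z)\ge c$ for Lebesgue-a.e.\ $z\in\calZ$. Then
\[
\norm{f}_{L^2}^2=\int_\calZ\norm{f(z)}_W^2\,\di z\le c^{-1}\int_\calZ\norm{f(z)}_W^2\,p_\zeta(z)\,\di z=c^{-1}\norm{f}_{L^2_\zeta}^2 .
\]
Hence $\norm{f}_{L^2}\le c^{-1/2}\norm{f}_{L^2_\zeta}$, which gives the embedding with constant $c^{-1/2}$. The remark after \cref{corollary_uniform_L2_convergence_posterior_mean} uses this inequality in a slightly different form, with the constant written in terms of $\essinf p_\zeta$. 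I will record this bound explicitly, since that remark relies on it.

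One point needs care: the map should be well defined on equivalence classes. Since $p_\zeta\ge c>0$ almost everywhere, the measures $\zeta$ and Lebesgue measure on $\calZ$ have the same null sets. So $\zeta$-a.e.\ equality implies Lebesgue-a.e.\ equality, and the identity map descends to the quotient spaces.

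For statement \ref{embedding_L2_L2zeta}, the same computation runs in the reverse direction:
\[
\norm{f}_{L^2_\zeta}^2=\int_\calZ\norm{f(z)}_W^2\,p_\zeta(z)\,\di z\le\norm{p_\zeta}_{L^\infty}\norm{f}_{L^2}^2 ,
\]
which gives the embedding with constant $\norm{p_\zeta}_{L^\infty}^{1/2}$. Well-definedness here is automatic: $\zeta$ is absolutely continuous with respect to Lebesgue measure, so Lebesgue-null sets are $\zeta$-null.

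The only subtle step is this equivalence-class issue in part \ref{embedding_L2zeta_L2}. It is handled by the mutual absolute continuity of $\zeta$ and Lebesgue measure when $p_\zeta\ge c>0$ a.e. The argument does not use the smoothness or boundedness of $\calZ$.
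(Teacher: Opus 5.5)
Your proof is correct and is essentially the paper's argument: in both parts you bound $p_\zeta$ inside the integral defining $\norm{\cdot}_{L^2_\zeta}^2$, from below by $\essinf_{z\in\calZ}p_\zeta(z)$ for part (i) and from above by $\norm{p_\zeta}_{L^\infty}$ for part (ii), which gives embedding constants $(\essinf_{z\in\calZ}p_\zeta(z))^{-1/2}$ and $\norm{p_\zeta}_{L^\infty}^{1/2}$. Your extra check that the identity map is well defined on equivalence classes (via mutual absolute continuity of $\zeta$ and Lebesgue measure in part (i)) is a point the paper leaves implicit.
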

\begin{proof}[Proof of \Cref{lemma_embedding}]
\cref{embedding_L2zeta_L2}: Let $u \in L^2_\zeta(\calZ,W)$ be arbitrary. Then
\begin{align*}
	\norm{u}_{L^2_\zeta(\calZ)}^2 &= \int_\calZ \norm{u(z)}_W^2 p_\zeta(z) \di z \ge \essinf_{z \in \calZ} \left\{p_\zeta(z)\right\} \int_\calZ \norm{u(z)}_W^2 \di z
	= \essinf_{z \in \calZ} \left\{p_\zeta(z)\right\} \norm{u}_{L^2(\calZ)}^2,
\end{align*}
thus proving $L^2_\zeta(\calZ)\hookrightarrow L^2(\calZ)$.
\\
\cref{embedding_L2_L2zeta}:  Let $u \in L^2(\calZ,W)$ be arbitrary. Then
\begin{align*}
	\norm{u}_{L^2_\zeta}^2 &= \int_\calZ \norm{u(z)}_W^2 p_\zeta(z) \di z  \le \norm{p_\zeta}_{L^\infty} \int_\calZ \norm{u(z)}_W^2 \di z=\norm{p_\zeta}_{L^\infty}\norm{u}_{L^2}^2,
\end{align*}
thus proving $L^2(\calZ,W)\hookrightarrow L^2_\zeta(\calZ,W)$.
\end{proof}

\subsection{Proof of forward posterior contraction result}
\label{sec_proof_forward_posterior_contraction_result}

In this section, we prove the forward posterior contraction result \Cref{theorem_Nickl_2_2_2}.
To do so, we shall also use \Cref{lemma_existence_of_tests}. The statement of \Cref{lemma_existence_of_tests} and a sketch of its proof are given in the text between equations (1.28) and (1.29) on \cite[p. 19]{Nic:2023}.

  We denote the Hellinger distance between two probability density functions $p_1$ and $p_2$ with respect to a common dominating measure by $h(p_1,p_2)$. 
  Recall the set $\Theta_N$ defined in \eqref{eq_Nickl_2_20}.
  \begin{lemma}[Existence of tests]
   \label{lemma_existence_of_tests}
   Suppose that \Cref{assumption_Nickl_Condition_2_1_1} holds, and let $L$ and $(\delta_N)_{N\in\N}$ be as in \eqref{eq_Nickl_2_4} and \eqref{eq_Nickl_2_19} respectively.
   Suppose $M$ and $\overline{m}=\overline{m}(M,L)$ satisfy \eqref{eq_Nickl_1_26} for every $N\in\N$ and let $\theta_0\in B_{\calR}(M)$. 
   Then there exists a universal constant $0<K<\infty$ such that for every $m>2\overline{m}$, there exists for every $N\in\N$ a test $\Psi_N:(V\times \X)^N\to\{0,1\}$ such that
   \begin{equation}
    \label{eq_Nickl_1_29}
    \begin{aligned}
   P^N_{\theta_0}(\Psi_N=1) &\leq \frac{1}{K}\exp(-(Km^2-1)N\delta_N^2),
   \\
   \sup_{\theta\in\Theta_N(M):h(p_\theta,p_{\theta_0})>m\delta_N} P^N_\theta(\Psi_N=0) &\leq \exp(-Km N\delta_N^2).
    \end{aligned}
    \end{equation}
  \end{lemma}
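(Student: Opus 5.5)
The plan is the classical Le Cam--Birgé construction via covering and Hellinger tests (as in Gin\'e--Nickl, Theorem 7.1.4, and the sketch on p.~19 of \cite{Nic:2023}). The argument has three steps: compare the Hellinger distance with $d_G$, build tests from a $d_G$-covering of $\Theta_N(M)$, and take their maximum.

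\emph{Step 1: Hellinger versus $d_G$.} For Gaussian regression with random design and density \eqref{eq_Nickl_1_10}, we have
\[ h^2(p_{\theta_1},p_{\theta_2}) = \int_\X 2\bigl(1 - e^{-\norm{G(\theta_1)(x)-G(\theta_2)(x)}_V^2/(8\sigma^2)}\bigr)\,\di\lambda(x), \]
up to the normalisation convention. Using $1-e^{-s}\le s$ gives $h(p_{\theta_1},p_{\theta_2}) \lesssim d_G(\theta_1,\theta_2)$ for all $\theta_1,\theta_2$. This is the only direction I need: a $d_G$-covering of $\Theta_N(M)$ at radius $\overline{m}\delta_N$ is a Hellinger covering at radius $c\,\overline{m}\delta_N$. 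Together with \eqref{eq_Nickl_1_26}, this yields
\[ \log N\bigl(\Theta_N(M),h,c\,\overline{m}\delta_N\bigr) \le N\delta_N^2, \]
and after absorbing $c$ into the normalisation of $\overline{m}$ the radius is $\overline{m}\delta_N$.

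\emph{Step 2: elementary tests.} I invoke the basic Hellinger testing lemma for i.i.d.\ data (Le Cam; \cite[Theorem 7.1.4]{GinNic:2016}). For any $\theta_1$ with $h(p_{\theta_1},p_{\theta_0})\ge \epsilon$, there is a test $\phi$ satisfying
\[ P^N_{\theta_0}\phi \le e^{-K_0N\epsilon^2}, \qquad \sup_{\theta:\,h(p_\theta,p_{\theta_1})\le \epsilon/2}P^N_\theta(1-\phi)\le e^{-K_0N\epsilon^2}, \]
with a universal constant $K_0$. It is the likelihood-ratio test between $p_{\theta_0}$ and the Hellinger ball around $p_{\theta_1}$, controlled by the Hellinger affinity and convexity of balls.

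\emph{Step 3: covering and union bound.} Cover $\{\theta\in\Theta_N(M): h(p_\theta,p_{\theta_0})>m\delta_N\}$ by Hellinger balls of radius $m\delta_N/4$. Since $m>2\overline{m}$, this radius exceeds $\overline{m}\delta_N/2$. Using the covering from Step 1 at radius $\overline m\delta_N/2$ and passing to centres inside the set doubles the radius. The number of balls is then at most $e^{N\delta_N^2}$, after adjusting $\overline{m}$ by a constant; this adjustment is where the condition $m>2\overline m$ is consumed. For each centre $\theta_j$ we have $h(p_{\theta_j},p_{\theta_0})>m\delta_N$, so Step 2 with $\epsilon=m\delta_N$ applies. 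Set $\Psi_N=\max_j\phi_j$. The type~1 error is at most
\[ e^{N\delta_N^2}e^{-K_0m^2N\delta_N^2}=e^{-(K_0m^2-1)N\delta_N^2}. \]
The type~2 error on the alternative is at most $e^{-K_0m^2N\delta_N^2}\le e^{-KmN\delta_N^2}$ once $m\ge1$, which we may assume since $m>2\overline m$ with $\overline m$ large. Taking $K=\min(K_0,1)$ and allowing the harmless prefactor $1/K\ge1$ gives \eqref{eq_Nickl_1_29}.

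The main obstacle is Step 1 combined with bookkeeping of constants. One must make sure the comparison $h\lesssim d_G$ has a universal constant (it depends only on $\sigma$, which can be folded into $K$ or $\overline m$). One must also make sure that passing from an external $d_G$-cover to an internal Hellinger cover of the alternative set only costs factors of $2$, so that $m>2\overline m$ suffices. The boundedness \eqref{eq_Nickl_2_3} is not needed here; it matters only for the reverse inequality used elsewhere.
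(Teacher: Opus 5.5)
Your proof is correct, and it uses the same ingredients as the paper. The upper bound $h\lesssim d_G$ (the paper's $h\le\tfrac12 d_G$) turns \eqref{eq_Nickl_1_26} into a Hellinger entropy bound; you rightly note that only this direction, and hence not \eqref{eq_Nickl_2_3}, is needed. Le Cam--Birg\'e tests for Hellinger balls are then combined by a maximum.

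The difference lies in the last step. The paper uses monotonicity of covering numbers to verify the shell condition $N(\{p_\theta:\theta\in\Theta_N(M),\ \varepsilon<h(p_\theta,p_{\theta_0})\le 2\varepsilon\},h,\varepsilon/4)\le e^{N\delta_N^2}$ for all $\varepsilon>\varepsilon_0=2\overline m\delta_N$. It then applies \cite[Theorem 7.1.4]{GinNic:2016} as a black box with $\varepsilon=m\delta_N$, so that $m>2\overline m$ is simply $\varepsilon>\varepsilon_0$. You instead unpack that theorem. Because the entropy bound is global, a single cover of the whole alternative and one union bound suffice, with no decomposition into shells and no summation over them. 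Your route is self-contained and slightly cleaner; the paper's is shorter and would also work under merely local, shell-wise entropy bounds.

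Two small bookkeeping points. First, your internal cover has radius $\overline m\delta_N<m\delta_N/2$, and your Step~2 already handles balls of radius $\epsilon/2=m\delta_N/2$. So no adjustment of $\overline m$ is needed, and $m>2\overline m$ is used exactly there rather than via the radius $m\delta_N/4$. Second, your reduction to $m\ge 1$ for the type~2 exponent is not available for a fixed small $\overline m$. However, both arguments actually give $e^{-Km^2N\delta_N^2}$, which is what the proof of \Cref{theorem_Nickl_2_2_2} uses. The $m$ in the stated exponent should therefore be read as $m^2$.
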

  For the type $1$ and type $2$ error bounds in the conclusion \eqref{eq_Nickl_1_29} of \Cref{lemma_existence_of_tests} to be nontrivial, it suffices to impose the additional condition that $Km^2-1>0$ on $m$.
  
The proof of \cref{lemma_existence_of_tests} uses the tests defined in \cite[Theorem 7.1.4]{GinNic:2016}. These tests are constructed by combining tests for ball-shaped hypotheses against ball-shaped alternatives \cite[Corollary 7.1.3]{Nic:2023}, and by covering the original alternative by such balls.
Although the definition of these tests is constructive, it is mostly of theoretical use, since it is generally not feasible to construct these tests in practice by finding a ball cover of the alternative.

   \begin{proof}[Proof of \Cref{lemma_existence_of_tests}]
		The proof consists in verifying the hypotheses of \cite[Theorem 7.1.4]{GinNic:2016}.
		Recall the definition \eqref{eq_Nickl_1_10} of $p_\theta$. 
	Since \Cref{assumption_Nickl_Condition_2_1_1} holds, we may apply \cite[Proposition 1.3.1]{Nic:2023} with the substitutions $\Theta\leftarrow\Theta_N(M)$ and $U\leftarrow U(M)$, for the envelope constant $U(M)$ from \eqref{eq_Nickl_2_3} in \Cref{assumption_Nickl_Condition_2_1_1}, to conclude that the Hellinger distance $h(p_\theta,p_{\vartheta})$ satisfies 
   \begin{equation}
    \label{eq_Nickl_1_20}
    C_U^{1/2} d_G(\theta,\vartheta)\leq h(p_\theta,p_{\vartheta})\leq \frac{1}{2} d_G(\theta,\vartheta),\quad\forall \theta,\vartheta\in\Theta_N(M)
   \end{equation}
	for $C_U$ given by \eqref{eq_C_of_U_function} evaluated at the envelope constant $U=U(M)$, and $d_G$ the semimetric defined in \eqref{semimetric_dG}.
	In particular, by the upper bound in \eqref{eq_Nickl_1_20}, $d_G(\theta,\vartheta)\leq \overline{m}\delta_N$ implies that $h(p_\theta,p_{\vartheta})\leq \tfrac{1}{2}\overline{m}\delta_N$.
	Thus, by the condition \eqref{eq_Nickl_1_26},
	\begin{equation}   
	\label{eq_intermed10a}
	 N(\{p_\theta: \theta\in\Theta_N(M)\},h,\tfrac{1}{2}\overline{m}\delta_N) \leq N(\Theta_N(M),d_G,\overline{m}\delta_N)\leq \exp(N\delta_N^2).
	\end{equation}
	Note that for every $\varepsilon_0>0$ and every $\varepsilon>\varepsilon_0$,
	\begin{align}   
	 &N(\{p_\theta: \theta\in\Theta_N(M), \varepsilon<h(p_\theta,p_{\theta_0})\leq 2\varepsilon\},h,\tfrac{\varepsilon}{4})
	 \nonumber\\
	 \leq & N(\{p_\theta: \theta\in\Theta_N(M)\},h,\tfrac{\varepsilon}{4})
	 \nonumber\\
	 \leq &N(\{p_\theta: \theta\in\Theta_N(M)\},h,\tfrac{\varepsilon_0}{4})
	 \label{eq_intermed10b}
	\end{align}
	where the first inequality follows from the fact that $\{p_\theta: \theta\in\Theta_N(M), \varepsilon<h(p_\theta,p_{\theta_0})\leq 2\varepsilon\}$ is contained in $\{p_\theta: \theta\in\Theta_N(M)\}$ and the definition of the covering number, and the second inequality follows from $\varepsilon>\varepsilon_0$ and the definition of the covering number.
	Using \eqref{eq_intermed10b} with $\varepsilon_0\leftarrow 2\overline{m}\delta_N$ and using \eqref{eq_intermed10a}, the preceding argument yields
	\begin{equation*}   
	 N(\{p_\theta: \theta\in\Theta_N(M), \varepsilon<h(p_\theta,p_{\theta_0})\leq 2\varepsilon\},h,\tfrac{\varepsilon}{4})\leq \exp(N\delta_N^2).
	\end{equation*}
	In particular, the hypotheses of \cite[Theorem 7.1.4]{GinNic:2016} hold with the set of densities $\calP\leftarrow \{p_\theta:\theta\in\Theta_N(M)\}$, the constant and thus nonincreasing function $\varepsilon\mapsto N(\varepsilon)\coloneqq \exp(N\delta_N^2)$, and $\varepsilon_0$ as above.
	By applying the conclusion of \cite[Theorem 7.1.4]{GinNic:2016} with $\varepsilon\leftarrow m\delta_N$, $m>2\overline{m}$, there exists for every $N\in\N$ a test $\Psi_N:(V\times\X)^N\to\{0,1\}$ for the hypothesis testing problem
	\begin{equation*}   
	 H_0\coloneqq\{p_{\theta_0}\}\quad\text{vs.}\quad H_1\coloneqq\{p_\theta: \theta\in\Theta_N(M), h(p,p_{\theta_0})>m\delta_N\}
	\end{equation*}
	such that for some universal constant $0<K<\infty$ and every $N\in\N$,
	\begin{equation*}   
	 P^N_{\theta_0}(\Psi_N=1)\leq \frac{1}{K}\exp((1-Km^2)N\delta_N^2),\quad \sup_{p_\theta: \theta\in\Theta_N(M), h(p,p_{\theta_0})>m\delta_N} E^N_\theta(1-\Psi_N)\leq \exp(-Km^2N\delta_N^2);
	\end{equation*}
	cf. \cite[equation (1.28)]{Nic:2023}.
	This completes the proof of \Cref{lemma_existence_of_tests}.
   \end{proof}
   
   \forwardPosteriorContraction*
   We now prove \Cref{theorem_Nickl_2_2_2}.
   We reproduce the argument given in the proof of \cite[Theorem 1.3.2]{Nic:2023} in \cite[pp. 19-20]{Nic:2023} and track the dependence on $N\delta_N^2$ of certain upper bounds, with the goal of showing that \eqref{eq_Nickl_1_27} holds.
  \begin{proof}[Proof of \Cref{theorem_Nickl_2_2_2}]
   The hypotheses of \Cref{theorem_Nickl_2_2_2} are those of \cite[Theorem 2.2.2]{Nic:2023}, and $\Theta_N$ in \eqref{eq_Nickl_2_20} is exactly the same as the `regularisation sets' defined in \cite[equation (2.20)]{Nic:2023}.
   Thus, by \cite[Theorem 2.2.2]{Nic:2023}, for $\theta_0\in\calH\cap\calR$, the statements \eqref{eq_Nickl_1_24}, \eqref{eq_Nickl_1_25} and \eqref{eq_Nickl_1_26} hold for some $A>0$, $\delta_N$ as in \eqref{eq_Nickl_2_19} and any $B>A+2$, provided that $M=M(B)$ is sufficiently large. It remains to prove \eqref{eq_Nickl_1_27}.
   	   
Recall the definition \eqref{eq_Nickl_1_23} of $\mathcal{B}_N$.
Define
	\begin{equation}
 \label{eq_Nickl_1_28}
 \mathcal{A}_N=\mathcal{A}_N(\theta_0)\coloneqq \left\{D_N : \int_{\Theta} e^{\ell_N(\theta)-\ell_N(\theta_0)}\mathrm{d}\Pi_N(\theta)\geq e^{-(A+2)N\delta_N^2} \right\}.
\end{equation}
The following implication
\begin{align*}
  & & \int_{\mathcal{B}_N(\theta_0,U)} e^{\ell_N(\theta)-\ell_N(\theta_0)}\mathrm{d}\Pi_N(\theta)\geq & e^{-2N\delta_N^2 } \Pi_N(\mathcal{B}_N(\theta_0,U))
  \\
  \Longrightarrow & & \int_{\Theta} e^{\ell_N(\theta)-\ell_N(\theta_0)}\mathrm{d}\Pi(\theta) \geq &  e^{-(A+2)N\delta_N^2}
\end{align*}
holds, because $\Theta\supset \mathcal{B}_N(\theta_0,U)$, because $e^{\ell_N(\theta)-\ell_N(\theta_0)}$ is nonnegative on $\Theta$, and because \eqref{eq_Nickl_1_24} holds by hypothesis.
Thus, by the definition \eqref{eq_Nickl_1_28} of $\mathcal{A}_N$,
\begin{align*}
 &P^N_{\theta_0}(\mathcal{A}_N)\geq P^N_{\theta_0}\left(\int_{\mathcal{B}_N(\theta_0,U)} e^{\ell_N(\theta)-\ell_N(\theta_0)}\mathrm{d}\Pi_N(\theta)\geq e^{-2N\delta_N^2 } \Pi_N(\mathcal{B}_N(\theta_0,U)) \right)
 \\
 \Longleftrightarrow &P^N_{\theta_0}(\mathcal{A}_N^\complement)\leq P^N_{\theta_0}\left(\int_{\mathcal{B}_N(\theta_0,U)} e^{\ell_N(\theta)-\ell_N(\theta_0)}\mathrm{d}\Pi_N(\theta)< e^{-2N\delta_N^2 } \Pi_N(\mathcal{B}_N(\theta_0,U)) \right).
\end{align*}
Since the hypotheses of  \Cref{theorem_Nickl_2_2_2} include the hypotheses of \Cref{lemma_1_3_3_Nickl}, we may apply \Cref{lemma_1_3_3_Nickl} with $B_N\leftarrow\mathcal{B}_N(\theta_0,U)$, $\nu(\cdot)\leftarrow \Pi_N(\cdot\cap \mathcal{B}_N(\theta_0,U))/\Pi_N(\mathcal{B}_N(\theta_0,U))$, $K\leftarrow 2$, and $U$.
The conclusion \eqref{eq_lemma_1_3_3_Nickl_conclusion} of \Cref{lemma_1_3_3_Nickl} then implies
\begin{equation*}
P^N_{\theta_0}\left(\int_{\mathcal{B}_N(\theta_0,U)} e^{\ell_N(\theta)-\ell_N(\theta_0)}\mathrm{d}\Pi_N(\theta)\leq e^{-2N\delta_N^2}\Pi_N(\mathcal{B}_N(\theta_0,U))\right)\leq \frac{8 (U^2+1) }{4N\delta_N^2}.
\end{equation*}
Combining the preceding two inequalities yields
\begin{equation}   
	\label{eq_intermed02}
	 P^N_{\theta_0}(\mathcal{A}_N^\complement)\leq \frac{2(U^2+1) }{N\delta_N^2}.
	\end{equation}
   
   Recall that \Cref{assumption_Nickl_Condition_2_1_1} holds. 
   By the hypothesis that $\theta_0\in \calH \cap\calR$ is fixed, there exists $M$ sufficiently large such that $\theta_0\in B_{\calR}(M)$. 
   By increasing $M$ if necessary, the argument in the first paragraph of the proof also ensures that there exists some $\overline{m}>0$ such that \eqref{eq_Nickl_1_26} holds for every $N\in\N$. Hence, we may apply \Cref{lemma_existence_of_tests} to conclude that for some universal constant $0<K<\infty$ and for every $m>2\overline{m}$ such that $Km^2-1>0$, there exists for every $N\in\N$ a test $\Psi_N:(V\times\X)^N\to\{0,1\}$ such that \eqref{eq_Nickl_1_29} holds. 
   Let $m$ belong to a range of values that we shall specify towards the end of the proof. Define
   \begin{equation}
   \label{eq_intermed03}
   \overline{\Theta}_N\coloneqq \Theta_N\cap \{\theta\in\Theta:h(p_\theta,p_{\theta_0})\leq m\delta_N\},\quad \overline{\Theta}_N^\complement\coloneqq \Theta\setminus\overline{\Theta}_N
   \end{equation}
   where $\Theta\subseteq L^2_\zeta$ is the set from \Cref{assumption_Nickl_Condition_2_2_1}.
	Let $0<b<B-(A+2)$ for $B$ in \eqref{eq_Nickl_1_25} and $A$ in \eqref{eq_Nickl_1_24}.
		Now
	\begin{align*}   
	 & P^N_{\theta_0}\left(\Pi_N\left(\overline{\Theta}_N^\complement\vert D_N\right)\geq e^{-bN\delta_N^2}\right)=P^N_{\theta_0}\left(\frac{\int_{\overline{\Theta}_N^\complement} e^{\ell_N(\theta)-\ell_N(\theta_0)}\mathrm{d}\Pi_N(\theta)}{\int_{\Theta} e^{\ell_N(\theta')-\ell_N(\theta_0)}\mathrm{d}\Pi_N(\theta')} \geq e^{-bN\delta_N^2}\right)
	 \\
	 \leq & P^N_{\theta_0}\left(\frac{\int_{\overline{\Theta}_N^\complement} e^{\ell_N(\theta)-\ell_N(\theta_0)}\mathrm{d}\Pi_N(\theta)}{\int_{\Theta} e^{\ell_N(\theta')-\ell_N(\theta_0)}\mathrm{d}\Pi_N(\theta')} \geq e^{-bN\delta_N^2},\mathcal{A}_N\right)+P^N_{\theta_0}(\mathcal{A}_N^\complement)
	 \\
	 \leq & P^N_{\theta_0}\left(\frac{\int_{\overline{\Theta}_N^\complement} e^{\ell_N(\theta)-\ell_N(\theta_0)}\mathrm{d}\Pi_N(\theta)}{\int_{\Theta} e^{\ell_N(\theta')-\ell_N(\theta_0)}\mathrm{d}\Pi_N(\theta')} \geq e^{-bN\delta_N^2},\Psi_N=0,\mathcal{A}_N\right)+P^N_{\theta_0}(\Psi_N=1,\mathcal{A}_N)+P^N_{\theta_0}(\mathcal{A}_N^\complement)
	 \\
	  \leq & P^N_{\theta_0}\left(\frac{\int_{\overline{\Theta}_N^\complement} e^{\ell_N(\theta)-\ell_N(\theta_0)}\mathrm{d}\Pi_N(\theta)}{\int_{\Theta} e^{\ell_N(\theta')-\ell_N(\theta_0)}\mathrm{d}\Pi_N(\theta')} \geq e^{-bN\delta_N^2},\Psi_N=0,\mathcal{A}_N\right)+P^N_{\theta_0}(\Psi_N=1)+P^N_{\theta_0}(\mathcal{A}_N^\complement)
	\end{align*}
	where the equation follows from \eqref{eq_Nickl_1_12}, and the inequalities follow from the monotonicity of probability measures.
	The second and third terms on the right-hand side of \eqref{eq_intermed05} are bounded according to the type $1$ error bound in the conclusion \eqref{eq_Nickl_1_29} of \Cref{lemma_existence_of_tests} and according to \eqref{eq_intermed02} respectively:
	\begin{align}
	P^N_{\theta_0}\left(\Pi_N\left(\overline{\Theta}_N^\complement\vert D_N\right)\geq e^{-bN\delta_N^2}\right)	 \leq &P^N_{\theta_0}\left(\frac{\int_{\overline{\Theta}_N^\complement} e^{\ell_N(\theta)-\ell_N(\theta_0)}\mathrm{d}\Pi_N(\theta)}{\int_{\Theta} e^{\ell_N(\theta')-\ell_N(\theta_0)}\mathrm{d}\Pi_N(\theta')} \geq e^{-bN\delta_N^2},\Psi_N=0,\mathcal{A}_N\right)
	\label{eq_intermed05}
	\\
	 &+\frac{1}{K}\exp(-(Km^2-1)N\delta_N^2)+\frac{2(U^2+1)}{N\delta_N^2}.
	\nonumber
	\end{align}
	We now bound the first term on the right-hand side of the inequality \eqref{eq_intermed05} according to
	\begin{align*}   
	 & P^N_{\theta_0}\left(\frac{\int_{\overline{\Theta}_N^\complement} e^{\ell_N(\theta)-\ell_N(\theta_0)}\mathrm{d}\Pi_N(\theta)}{\int_{\Theta} e^{\ell_N(\theta')-\ell_N(\theta_0)}\mathrm{d}\Pi_N(\theta')} \geq e^{-bN\delta_N^2},\Psi_N=0,\mathcal{A}_N\right)
	 \\
	 =&P^N_{\theta_0}\left(\int_{\overline{\Theta}_N^\complement} e^{\ell_N(\theta)-\ell_N(\theta_0)}\mathrm{d}\Pi_N(\theta) \geq e^{-bN\delta_N^2}\int_{\Theta} e^{\ell_N(\theta')-\ell_N(\theta_0)}\mathrm{d}\Pi_N(\theta'),\Psi_N=0,\mathcal{A}_N\right)
	 \\
	 \leq & P^N_{\theta_0}\left(\int_{\overline{\Theta}_N^\complement} e^{\ell_N(\theta)-\ell_N(\theta_0)}\mathrm{d}\Pi_N(\theta) \geq e^{-bN\delta_N^2}e^{-(A+2)N\delta_N^2},\Psi_N=0,\mathcal{A}_N\right)
	 \\
	 \leq & P^N_{\theta_0}\left(\int_{\overline{\Theta}_N^\complement} e^{\ell_N(\theta)-\ell_N(\theta_0)}\mathrm{d}\Pi_N(\theta)(1-\Psi_N) \geq e^{-bN\delta_N^2}e^{-(A+2)N\delta_N^2},\Psi_N=0\right)
	 \\
	 \leq &  P^N_{\theta_0}\left(\int_{\overline{\Theta}_N^\complement} e^{\ell_N(\theta)-\ell_N(\theta_0)}\mathrm{d}\Pi_N(\theta)(1-\Psi_N) \geq e^{-(b+A+2)N\delta_N^2}\right)
	 \\
	 \leq & E^N_{\theta_0}\left[ \int_{\overline{\Theta}_N^\complement} e^{\ell_N(\theta)-\ell_N(\theta_0)}\mathrm{d}\Pi_N(\theta)(1-\Psi_N)\right] e^{(b+A+2)N\delta_N^2}
	 \\
	 =&  \int_{\overline{\Theta}_N^\complement}E^N_{\theta_0}\left[ e^{\ell_N(\theta)-\ell_N(\theta_0)}(1-\Psi_N)\right]\mathrm{d}\Pi_N(\theta) e^{(b+A+2)N\delta_N^2},
	\end{align*}
	where the first inequality follows from the definition \eqref{eq_Nickl_1_28} of $\mathcal{A}_N$,
	the second and third inequalities follow by the monotonicity of probability measures, the fourth inequality follows from Markov's inequality, and the final equation follows from Tonelli's theorem.
	
	Next, we bound $\int_{\overline{\Theta}_N^\complement}E^N_{\theta_0}[ e^{\ell_N(\theta)-\ell_N(\theta_0)}(1-\Psi_N)\mathrm{d}\Pi_N(\theta)$.
	By the definitions of $\overline{\Theta}_N$ and $\overline{\Theta}_N^\complement$ in \eqref{eq_intermed03},
	\begin{equation}   
	\label{eq_intermed04}
	 \overline{\Theta}_N^\complement=\Theta_N^\complement\cup \{\theta\in\Theta_N~\vert~h(p_\theta,p_{\theta_0})>m\delta_N\}.
	\end{equation}
	By \eqref{eq_Nickl_1_10} and \eqref{eq_Nickl_1_13},
	\begin{equation}   
	\label{eq_Nickl_1_30}
	 E^N_{\theta_0}\left[ e^{\ell_N(\theta)-\ell_N(\theta_0)}(1-\Psi_N)\right]=E^N_{\theta_0}\left[ \prod_{i=1}^{N}\frac{p_\theta}{p_{\theta_0}}(1-\Psi_N)\right]=E^N_\theta\left[1-\Psi_N\right].
	\end{equation}
	Now \eqref{eq_intermed04} and \eqref{eq_Nickl_1_30} imply that
	\begin{align*}   
	 &\int_{\overline{\Theta}_N^\complement}E^N_{\theta_0}\left[ e^{\ell_N(\theta)-\ell_N(\theta_0)}(1-\Psi_N)\right]\mathrm{d}\Pi_N(\theta) 
	 \\
	 =&\int_{\Theta_N^\complement} E^N_{\theta}\left[ 1-\Psi_N\right]\mathrm{d}\Pi_N(\theta)+\int_{\{\theta\in\Theta_N:h(p_\theta,p_{\theta_0})>m\delta_N\}} E^N_{\theta}\left[1-\Psi_N\right]\mathrm{d}\Pi_N(\theta)
	 \\
	 \leq& \Pi_N(\Theta_N^\complement)+\sup_{\{\theta\in\Theta_N:h(p_\theta,p_{\theta_0})>m\delta_N\}} E^N_{\theta}\left[1-\Psi_N\right],
	\end{align*}
	where the first term on the right-hand side of the inequality follows since $\Psi_N$ takes values in $\{0,1\}$, and the second term follows since $\Pi_N(\Theta_N)\leq 1$.
	By \eqref{eq_Nickl_1_25}, $\Pi_N(\Theta_N^\complement)\leq e^{-BN\delta_N^2}$, provided the radius parameter $M$ that defines $\Theta_N(M)$ in \eqref{eq_Nickl_2_20} is large enough. 
	Furthermore, we have 
	\begin{align*}   
	 \sup_{\{\theta\in\Theta_N:h(p_\theta,p_{\theta_0})>m\delta_N\}} E^N_{\theta}\left[1-\Psi_N\right]= \sup_{\{\theta\in\Theta_N:h(p_\theta,p_{\theta_0})>m\delta_N\}} P^N_{\theta}(\Psi_N=0)\leq  \exp(-Km^2N\delta_N^2)
	\end{align*}
	where the equation follows since $\Psi_N$ is $\{0,1\}$-valued.
	The inequality follows from the bound on the type $2$ errors in \eqref{eq_Nickl_1_29}. 
	Thus, it follows that the first term on the right-hand side of \eqref{eq_intermed05} satisfies
	\begin{align}   
	 &P^N_{\theta_0}\left(\frac{\int_{\overline{\Theta}_N^\complement} e^{\ell_N(\theta)-\ell_N(\theta_0)}\mathrm{d}\Pi_N(\theta)}{\int_{\Theta} e^{\ell_N(\theta')-\ell_N(\theta_0)}\mathrm{d}\Pi_N(\theta')} \geq e^{-bN\delta_N^2},\Psi_N=0,\mathcal{A}_N\right)
	 \nonumber\\
	 \leq & (e^{-BN\delta_N^2}+e^{-Km^2N\delta_N^2})e^{(b+A+2)N\delta_N^2}=e^{(b+A+2-B)N\delta_N^2}+e^{(b+A+2-Km^2)N\delta_N^2},
	 \label{eq_intermed04a}
	\end{align}
	Note that $\Pi_N(\overline{\Theta}_N\vert D_N)=1-\Pi_N(\overline{\Theta}_N^\complement\vert D_N)$, since  \Cref{assumption_Nickl_Condition_2_2_1} implies $\Pi_N(\Theta)=1$, and since $\overline{\Theta}_N^\complement\coloneq\Theta\setminus\overline{\Theta}_N$ by \eqref{eq_intermed03}. 
	Hence, 
    \begin{align}
     &P^N_{\theta_0}\left( \Pi_N\left(\overline{\Theta}_N\vert D_N\right)\leq 1-e^{-bN\delta_N^2}\right)
     \nonumber
     \\
     =&P^N_{\theta_0}\left(\Pi_N\left(\overline{\Theta}_N^\complement\vert D_N\right)\geq e^{-bN\delta_N^2}\right)	
     \nonumber
     \\
     \leq &P^N_{\theta_0}\left(\frac{\int_{\overline{\Theta}_N^\complement} e^{\ell_N(\theta)-\ell_N(\theta_0)}\mathrm{d}\Pi_N(\theta)}{\int_{\Theta} e^{\ell_N(\theta')-\ell_N(\theta_0)}\mathrm{d}\Pi_N(\theta')} \geq e^{-bN\delta_N^2},\Psi_N=0,\mathcal{A}_N\right)+\frac{1}{K}e^{-(Km^2-1)N\delta_N^2}+\frac{2(U^2+1)}{N\delta_N^2}
	 \nonumber
	 \\
     \leq & e^{(b+A+2-B)N\delta_N^2}+e^{(b+A+2-Km^2)N\delta_N^2}+\frac{1}{K}e^{-(Km^2-1)N\delta_N^2}+\frac{2(U^2+1)}{N\delta_N^2}
     \label{eq_intermed04b}
    \end{align}
	where the first and second inequalities follow by \eqref{eq_intermed05} and \eqref{eq_intermed04a} respectively.	
	
	By the hypothesis that $0<b<B-(A+2)$ in \Cref{theorem_Nickl_2_2_2}, the first term on the right-hand side of the second inequality decreases exponentially in $N\delta_N^2$ to zero as $N\to\infty$.
	We now define the range of values of $m$ for \eqref{eq_intermed03}: in addition to $m>2\overline{m}$ and $Km^2-1>0$, we further require $b+A+2-Km^2<0$. These conditions are all satisfied for 
	\begin{equation*}
	 m>\max\left\{2\overline{m},\frac{1}{\sqrt{K}},\sqrt{\frac{b+A+2}{K}}\right\}.
	\end{equation*}
	For any such $m$, the second term on the right-hand side of \eqref{eq_intermed04b} decreases exponentially in $N\delta_N^2$ to zero.
	Thus, for every $c>0$, there exists $N^*=N^*(c)\in\N$ such that for every $N\geq N^*$,
	\begin{align}
	\label{eq_intermed06}
     P^N_{\theta_0}\left(\Pi_N\left(\overline{\Theta}_N\vert D_N\right)\leq 1- e^{-bN\delta_N^2}\right)
     \leq (1+c)\frac{2(U^2+1)}{N\delta_N^2}.
	\end{align}	
	To complete the proof of \Cref{theorem_Nickl_2_2_2}, we recall from  \eqref{eq_intermed03} that 
	$\overline{\Theta}_N=\{\theta\in\Theta_N:h(p_\theta,p_{\theta_0})\leq m\delta_N\}$.
	Next, recall the inequality $ C_U^{1/2} d_G(\theta,\vartheta)\leq h(p_\theta,p_{\vartheta})$ from \eqref{eq_Nickl_1_20}, where $C_U$ is given by \eqref{eq_C_of_U_function} with $u\leftarrow U=U(M)$, and $d_G$ is the semimetric defined in \eqref{semimetric_dG}.
	By this inequality,
	\begin{align*}   
	 \overline{\Theta}_N=\{\theta\in\Theta_N:h(p_\theta,p_{\theta_0})\leq m\delta_N\}\subseteq & \{\theta\in\Theta_N:d_G(\theta,\theta_0)\leq m\delta_N C_U^{-1/2}\}.
	\end{align*}
	The set relation above implies that 
	\begin{align*}   
		&\left\{D_N:\Pi_N\left( \{\theta\in\Theta_N:d_G(\theta,\theta_0)\leq m\delta_N C_U^{-1/2}\}\biggr\vert D_N\right)\leq 1-e^{-bN\delta_N^2}\right\}
		\\
		\subseteq &\left\{ D_N:\Pi_N(\overline{\Theta}_N\vert D_N)\leq 1-e^{-bN\delta_N^2}\right\}.
	\end{align*}
	Combining the set relation above with \eqref{eq_intermed06}, we conclude that for every $0<c<1$, there exists $N^*(c)\in\N$ such that for every $N\geq N^*$,
	\begin{equation*}   
	 P^N_{\theta_0}\left(\Pi_N\left(\{\theta\in\Theta_N:d_G(\theta,\theta_0)\leq m\delta_N C_U^{-1/2}\vert D_N\right)\leq 1-e^{-bN\delta_N^2}\right)	\leq (1+c)\frac{2(U^2+1)}{N\delta_N^2}.
	\end{equation*}
	This proves \eqref{eq_Nickl_1_27}. 
  \end{proof}

\subsection{Proof that uniform posterior contraction implies uniform convergence of posterior mean}
\label{sec_proof_uniform_posterior_contraction_implies_uniform_convergence_posterior_mean}

In this section, we prove \Cref{theorem_uniform_posterior_contraction_implies_uniform_convergence_posterior_mean}.
For the proof of \Cref{theorem_uniform_posterior_contraction_implies_uniform_convergence_posterior_mean}, we will use \Cref{lemma_probability_of_A_N_events_converges_to_1_uniformly} below.
To prove \Cref{lemma_probability_of_A_N_events_converges_to_1_uniformly}, we will use \Cref{lemma_1_3_3_Nickl}.
Recall the definitions \eqref{eq_Nickl_1_13} and \eqref{eq_Nickl_1_23} of $\ell_N$ and $\mathcal{B}_N$ respectively.
\begin{lemma}[{\cite[Lemma 1.3.3]{Nic:2023}}]
 \label{lemma_1_3_3_Nickl}
Suppose \eqref{eq_Nickl_2_3} from \Cref{assumption_Nickl_Condition_2_1_1} holds with envelope constant $U=U(M)$, where $M$ is such that $\theta_0\in\mathcal{B}_{\calR}(M)$.
Let $\nu$ be a probability measure on a measurable subset $B_N$ of the set $\mathcal{B}_N(\theta_0,U)$ defined in \eqref{eq_Nickl_1_23} with $\mathcal{U}\leftarrow U$. Then for every $K\geq 2$ and $N\in\N$,
 \begin{equation}
 \label{eq_lemma_1_3_3_Nickl_conclusion}
 P^N_{\theta_0}\left(\int_{B_N} e^{\ell_N(\theta)-\ell_N(\theta_0)}\mathrm{d}\nu(\theta)\leq e^{-KN\delta_N^2}\right)\leq \frac{8 (U^2+1) }{K^2N\delta_N^2}.
 \end{equation}
\end{lemma}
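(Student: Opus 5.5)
The plan is a second-moment (Chebyshev) argument after a Jensen reduction, following the proof of \cite[Lemma 1.3.3]{Nic:2023}.

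\textbf{Reduction.} Set $Z_N(\theta)\coloneqq \ell_N(\theta)-\ell_N(\theta_0)$. By Jensen's inequality applied to the convex function $\exp$ and the probability measure $\nu$,
\[ \int_{B_N} e^{Z_N(\theta)}\,\mathrm{d}\nu(\theta)\geq \exp\Bigl(\int_{B_N} Z_N(\theta)\,\mathrm{d}\nu(\theta)\Bigr). \]
So the event in \eqref{eq_lemma_1_3_3_Nickl_conclusion} is contained in $\{\int Z_N\,\mathrm{d}\nu\leq -KN\delta_N^2\}$. Measurability of $(\theta,D_N)\mapsto Z_N(\theta)$ is inherited from that of $G$, so Fubini applies throughout.

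\textbf{Mean and variance.} Under $P^N_{\theta_0}$ we have $Y_i=G(\theta_0)(X_i)+\epsilon_i$. Expanding \eqref{eq_Nickl_1_13} gives
\[ Z_N(\theta)=-\frac{1}{2\sigma^2}\sum_{i=1}^N\norm{G(\theta)(X_i)-G(\theta_0)(X_i)}_V^2+\frac{1}{\sigma^2}\sum_{i=1}^N\langle \epsilon_i,G(\theta)(X_i)-G(\theta_0)(X_i)\rangle_V. \]
Write $W_N\coloneqq\int Z_N\,\mathrm{d}\nu$.
\begin{itemize}
\item Its expectation is $-\frac{N}{2\sigma^2}\int d_G(\theta,\theta_0)^2\,\mathrm{d}\nu$, which is at least $-N\delta_N^2/(2\sigma^2)$ because $B_N\subseteq\mathcal{B}_N(\theta_0,U)$.
\item Hence, provided $K$ dominates the mean (treating $\sigma$ as normalised, as in Nickl's convention), $K\geq 2$ gives $KN\delta_N^2+E W_N\geq \tfrac{K}{2}N\delta_N^2$.
\item Chebyshev's inequality then yields $P(W_N\leq -KN\delta_N^2)\leq 4\operatorname{Var}(W_N)/(K^2N^2\delta_N^4)$.
\end{itemize}

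\textbf{Bounding the variance.} $W_N$ is a sum of $N$ i.i.d. terms $\xi_i=-\frac{1}{2\sigma^2}\int\norm{g_\theta(X_i)}^2\,\mathrm{d}\nu+\frac{1}{\sigma^2}\int\langle\epsilon_i,g_\theta(X_i)\rangle\,\mathrm{d}\nu$, where $g_\theta=G(\theta)-G(\theta_0)$. By the envelope bound $\norm{G(\theta)}_\infty\leq U$ on $\mathcal{B}_N(\theta_0,U)$, together with $\theta_0\in B_\calR(M)$ and \eqref{eq_Nickl_2_3}, we get $\norm{g_\theta}_\infty\leq 2U$.
\begin{itemize}
\item Quadratic part: its second moment is at most $\frac{1}{4\sigma^4}(2U)^2\int E\norm{g_\theta(X)}^2\,\mathrm{d}\nu\leq \frac{U^2}{\sigma^4}\delta_N^2$, using Jensen in $\nu$ and the bound $d_G\leq\delta_N$.
\item Noise part: by independence of $\epsilon_i$ and $X_i$, its second moment is $\frac{1}{\sigma^2}E\norm{\int g_\theta(X)\,\mathrm{d}\nu}^2\leq \frac{\delta_N^2}{\sigma^2}$.
\end{itemize}
Summing, $\operatorname{Var}(W_N)\leq 2N\delta_N^2(U^2+1)$ up to the normalisation of $\sigma$. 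Combined with the Chebyshev bound, this gives exactly $8(U^2+1)/(K^2N\delta_N^2)$.

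\textbf{Main obstacle.} The delicate points are tracking the constants so that the factor $8$ and the normalisation by $\sigma$ come out exactly as stated, and justifying the exchange of the $\nu$-integral with the expectations via Fubini and Jensen.
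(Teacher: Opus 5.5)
Your proposal is correct and follows the same route as the standard proof of \cite[Lemma 1.3.3]{Nic:2023}, which the paper cites without reproducing. That route is Jensen's inequality to pass to $\int(\ell_N(\theta)-\ell_N(\theta_0))\,\mathrm{d}\nu(\theta)$, then centring via $\mathrm{KL}(p_{\theta_0},p_\theta)=\tfrac12 d_G(\theta,\theta_0)^2\le\tfrac12\delta_N^2$, then Chebyshev's inequality with the per-observation second-moment bound $2(U^2+1)\delta_N^2$, which yields the factor $4\cdot 2=8$.

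You are also right that the $\sigma$-free constants, and the sufficiency of $K\ge 2$, presuppose Nickl's normalisation $\sigma=1$. For general $\sigma$ your computation gives $8(U^2\sigma^{-4}+\sigma^{-2})/(K^2N\delta_N^2)$, valid once $K\ge\max\{2,\sigma^{-2}\}$.
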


The statement of \Cref{lemma_1_3_3_Nickl} differs from the statement of \cite[Lemma 1.3.3]{Nic:2023} in certain aspects.
First, in the statement of \cite[Lemma 1.3.3]{Nic:2023}, the constant $U$ is not explicitly described, whereas above we specify $U$ to be the envelope constant from \eqref{eq_Nickl_2_3} in \Cref{assumption_Nickl_Condition_2_1_1} for a suitably large $M$.
Second, \cite[Lemma 1.3.3]{Nic:2023} does not assume that \Cref{assumption_Nickl_Condition_2_1_1} holds.
We shall apply \Cref{lemma_1_3_3_Nickl} to prove theorems for which \Cref{assumption_Nickl_Condition_2_1_1} is stated as a hypothesis, so there is no loss of generality.

Recall that $\ell_N$ defined in \eqref{eq_Nickl_1_13} depends on the data $D_N=(Y_i,X_i)_{i=1}^{N}$.
\begin{lemma}
 \label{lemma_probability_of_A_N_events_converges_to_1_uniformly}
Suppose that the hypotheses of \Cref{theorem_uniform_posterior_contraction_implies_uniform_convergence_posterior_mean} hold, and define
\begin{equation}
 \label{eq_Nickl_1_28_uniform}
A_N=A_N(\theta_0)\coloneqq \left\{D_N : \int_{\Theta} e^{\ell_N(\theta)-\ell_N(\theta_0)}\mathrm{d}\Pi(\theta)\geq e^{-(\mathbf{A}+2)N\delta_N^2} \right\}
\end{equation}
for $\mathbf{A}$ as in \eqref{eq_uniform_constant_A}.
Then for sufficiently large $N$,
\begin{equation*}
 P^N_{\theta_0}(A_N^\complement)\leq \frac{2(U^2+1) }{N\delta_N^2},
\end{equation*}
where $U=U(M)$ is the envelope constant from \eqref{eq_Nickl_2_3} of \Cref{assumption_Nickl_Condition_2_1_1} corresponding to any $M\geq \sup_{\theta_0\in S}\norm{\theta_0}_{\calR}$.
\end{lemma}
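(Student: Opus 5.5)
The proof mirrors the derivation of \eqref{eq_intermed02} in the proof of \Cref{theorem_Nickl_2_2_2}, with the $\theta_0$-dependent exponent $A$ from \eqref{eq_Nickl_1_24} replaced by the uniform exponent $\mathbf{A}$ from \eqref{eq_uniform_constant_A}. (The prior in \eqref{eq_Nickl_1_28_uniform} is $\Pi_N$.) Fix $\theta_0\in S$ and $M\geq\sup_{\theta_0\in S}\norm{\theta_0}_{\calR}$, which is finite because $S$ is bounded in the $\calR$ norm. Then $\theta_0\in B_{\calR}(M)$, so \eqref{eq_Nickl_2_3} yields the envelope constant $U=U(M)$, and $U$ does not depend on $\theta_0$.

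The first step is an event inclusion. The integrand $e^{\ell_N(\theta)-\ell_N(\theta_0)}$ is nonnegative and $\mathcal{B}_N(\theta_0,U)\subseteq\Theta$. Hence, on the event
\[\int_{\mathcal{B}_N(\theta_0,U)} e^{\ell_N(\theta)-\ell_N(\theta_0)}\di\Pi_N(\theta)\geq e^{-2N\delta_N^2}\,\Pi_N(\mathcal{B}_N(\theta_0,U)),\]
we obtain, using \eqref{eq_uniform_constant_A},
\[\int_\Theta e^{\ell_N-\ell_N(\theta_0)}\di\Pi_N\geq e^{-2N\delta_N^2}e^{-\mathbf{A}N\delta_N^2}.\]
So this event is contained in $A_N$. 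Consequently $P^N_{\theta_0}(A_N^\complement)$ is bounded by the probability of the complementary strict inequality.

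The second step applies \Cref{lemma_1_3_3_Nickl}. We need $\Pi_N(\mathcal{B}_N(\theta_0,U))>0$, which holds by \eqref{eq_uniform_constant_A}, so we may take $B_N\leftarrow\mathcal{B}_N(\theta_0,U)$, the normalised restriction $\nu\leftarrow\Pi_N(\cdot\cap\mathcal{B}_N(\theta_0,U))/\Pi_N(\mathcal{B}_N(\theta_0,U))$, and $K\leftarrow 2$. Dividing the event through by $\Pi_N(\mathcal{B}_N(\theta_0,U))$ turns it into exactly the event in \eqref{eq_lemma_1_3_3_Nickl_conclusion}. That bound is $8(U^2+1)/(4N\delta_N^2)=2(U^2+1)/(N\delta_N^2)$. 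The right-hand side depends only on $U$ and $N$, so it is uniform in $\theta_0\in S$, and the argument works for every $N$ (in particular for sufficiently large $N$).

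The only delicate point is checking the hypotheses of \Cref{lemma_1_3_3_Nickl} uniformly. Specifically, $U$ must be the envelope constant for a radius $M$ with $\theta_0\in B_{\calR}(M)$ for all $\theta_0\in S$ simultaneously. This holds by the choice of $M$, and the monotonicity of $\mathcal{B}_N(\theta_0,\cdot)$ lets one enlarge $U$ if needed. Beyond that, the argument is routine bookkeeping.
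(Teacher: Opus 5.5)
Your proposal is correct and follows the paper's proof essentially step for step. You first show that the event $\bigl\{\int_{\mathcal{B}_N(\theta_0,U)} e^{\ell_N(\theta)-\ell_N(\theta_0)}\,\mathrm{d}\Pi_N(\theta)\geq e^{-2N\delta_N^2}\Pi_N(\mathcal{B}_N(\theta_0,U))\bigr\}$ is contained in $A_N$, using nonnegativity of the integrand and \eqref{eq_uniform_constant_A}, and then apply \Cref{lemma_1_3_3_Nickl} with $\nu$ the normalised restriction of $\Pi_N$ and $K=2$; your side remarks (reading $\Pi$ as $\Pi_N$, positivity of the normalising constant, and that the bound holds for every $N$) are accurate refinements of points the paper leaves implicit.
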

  The event $A_N$ in \eqref{eq_Nickl_1_28_uniform} is the same as the event $A_N$ defined in \cite[equation (1.28)]{Nic:2023}, except that the lower bound is $e^{-(\mathbf{A}+2)N\delta_N^2}$ instead of $e^{-(A+2)N\delta_N^2}$ for $A=A(\theta_0)$ satisfying \eqref{eq_Nickl_1_24}.
\begin{proof}[Proof of \Cref{lemma_probability_of_A_N_events_converges_to_1_uniformly}]
Recall the definition \eqref{eq_Nickl_1_23} of $\mathcal{B}_N$.
If 
\begin{equation*}
  \int_{\mathcal{B}_N(\theta_0,U)} e^{\ell_N(\theta)-\ell_N(\theta_0)}\mathrm{d}\Pi_N(\theta)\geq e^{-2N\delta_N^2 } \Pi_N(\mathcal{B}_N(\theta_0,U))
\end{equation*}
then
\begin{equation*}
 \int_{\Theta} e^{\ell_N(\theta)-\ell_N(\theta_0)}\mathrm{d}\Pi(\theta) \geq  e^{-(\mathbf{A}+2)N\delta_N^2},
\end{equation*}
because $\Theta\supset \mathcal{B}_N(\theta_0,U)$, $e^{\ell_N(\theta)-\ell_N(\theta_0)}$ is nonnegative on $\Theta$, and because \eqref{eq_uniform_constant_A} holds by hypothesis.
Thus, by the definition \eqref{eq_Nickl_1_28_uniform} of $A_N$,
\begin{align*}
 &P^N_{\theta_0}(A_N)\geq P^N_{\theta_0}\left(\int_{\mathcal{B}_N(\theta_0,U)} e^{\ell_N(\theta)-\ell_N(\theta_0)}\mathrm{d}\Pi_N(\theta)\geq e^{-2N\delta_N^2 } \Pi_N(\mathcal{B}_N(\theta_0,U)) \right)
 \\
 \Longleftrightarrow\ &P^N_{\theta_0}(A_N^\complement)\leq P^N_{\theta_0}\left(\int_{\mathcal{B}_N(\theta_0,U)} e^{\ell_N(\theta)-\ell_N(\theta_0)}\mathrm{d}\Pi_N(\theta)< e^{-2N\delta_N^2 } \Pi_N(\mathcal{B}_N(\theta_0,U)) \right).
\end{align*}
Let $\sup_{\theta_0\in S}\norm{\theta_0}_{\calR}\leq M<\infty$ and $U=U(M)$, as stated in the hypotheses.
Since the hypotheses of \Cref{theorem_uniform_posterior_contraction_implies_uniform_convergence_posterior_mean} include the hypotheses of \Cref{lemma_1_3_3_Nickl}, we may apply \Cref{lemma_1_3_3_Nickl} with $B_N\leftarrow\mathcal{B}_N(\theta_0,U)$, $\nu(\cdot)\leftarrow \Pi_N(\cdot\cap \mathcal{B}_N(\theta_0,U))/\Pi_N(\mathcal{B}_N(\theta_0,U))$, $K\leftarrow 2$, and $U$.
The conclusion \eqref{eq_lemma_1_3_3_Nickl_conclusion} of \Cref{lemma_1_3_3_Nickl} then implies
\begin{equation*}
P^N_{\theta_0}\left(\int_{\mathcal{B}_N(\theta_0,U)} e^{\ell_N(\theta)-\ell_N(\theta_0)}\mathrm{d}\Pi_N(\theta)\leq e^{-2N\delta_N^2}\Pi_N(\mathcal{B}_N(\theta_0,U))\right)\leq \frac{8 (U^2+1) }{4N\delta_N^2},
\end{equation*}
which in turn implies the desired conclusion.
This completes the proof of \Cref{lemma_probability_of_A_N_events_converges_to_1_uniformly}.
\end{proof}

\begin{proof}[Proof of \Cref{theorem_uniform_posterior_contraction_implies_uniform_convergence_posterior_mean}]
 The idea of the proof is to modify the proof of \cite[Theorem 2.3.2]{Nic:2023} by replacing $\theta_0$-dependent constants with constants that depend only on the suitably bounded set $S$ of candidate truths, and by substituting the $\norm{\cdot}_{L^2_\zeta}$ norm with the $\norm{\cdot}_{\mathbb{B}}$ norm.
 
 The first part of the proof is analogous to the first part of the proof of \cite[Theorem 2.3.2]{Nic:2023}: define the event
 \begin{equation}
 \label{eq_F_N}
   F_N\coloneqq \{\theta\in\Theta\ :\ \norm{\theta-\theta_0}_{\mathbb{B}}\leq C\delta_N^\eta\},
 \end{equation}
 for $C$ and $\delta_N^\eta$ as in \eqref{eq_hypothesis_uniform_posterior_contraction_in_Banach_norm}.
Then for arbitrary $\theta_0\in S$,
 \begin{align}
  \norm{\Exp^{\Pi_N}[\theta\vert D_N]-\theta_0}_{\mathbb{B}} &\leq \Exp^{\Pi_N}[\norm{\theta-\theta_0}_{\mathbb{B}}\vert D_N]=\Exp^{\Pi_N}[\norm{\theta-\theta_0}_{\mathbb{B}}(\mathbf{1}_{F_N}+\mathbf{1}_{F_N^\complement})\vert D_N]
  \nonumber\\
  &\leq C\delta_N^\eta+\Exp^{\Pi_N}[\norm{\theta-\theta_0}_{\mathbb{B}}\mathbf{1}_{F_N^\complement}\vert D_N]
  \nonumber\\
  &\leq C\delta_N^\eta+\Exp^{\Pi_N}[\norm{\theta-\theta_0}^2_{\mathbb{B}}\vert D_N]^{1/2}\Pi_N(F_N^\complement\vert D_N)^{1/2},
  \label{eq_intermed00}
 \end{align}
where the first, second, and third inequalities follow by Jensen's inequality, the definition of $F_N$, and the Cauchy--Schwarz inequality respectively.

We now modify the remainder of the proof of \cite[Theorem 2.3.2]{Nic:2023} in order to replace some $\theta_0$-dependent quantities with analogous quantities that depend only on $S$. 
We do this in several steps.
First, we show that the second term on the right-hand side of \eqref{eq_intermed00} satisfies
 \begin{align*}
 &P^N_{\theta_0}\left(\Exp^{\Pi_N}[\norm{\theta-\theta_0}^2_{\mathbb{B}}\vert D_N]\Pi_N(F_N^\complement\vert D_N)>(C\delta_N^\eta)^2\right)
 \\
 \leq& o(1)+P^N_{\theta_0}\left(\Exp^{\Pi_N}[\norm{\theta-\theta_0}^2_{\mathbb{B}}\vert D_N]e^{-bN\delta_N^2}>(C\delta_N^\eta)^2\right),
\end{align*}
where the $o(1)$ term arises from the hypothesis \eqref{eq_hypothesis_uniform_posterior_contraction_in_Banach_norm} and is thus uniform over $\theta_0\in S$.
Second, we further bound the second term on the right-hand side of the inequality above by
\begin{align*}
 &P^N_{\theta_0}\left(\Exp^{\Pi_N}[\norm{\theta-\theta_0}^2_{\mathbb{B}}\vert D_N]e^{-bN\delta_N^2}>(C\delta_N^\eta)^2\right)
 \\
\leq &P^N_{\theta_0}\left(\int_{\Theta} \norm{\theta-\theta_0}^2_{\mathbb{B}} e^{\ell_N(\theta)-\ell_N(\theta_0)} e^{(\mathbf{A}+2-b)N\delta_N^2}\mathrm{d}\Pi_N(\theta)>(C\delta_N^\eta)^2\right)+P^N_{\theta_0}\left(A_N^\complement\right),
\end{align*}
for $A_N$ as in \eqref{eq_Nickl_1_28_uniform}. 
By \Cref{lemma_probability_of_A_N_events_converges_to_1_uniformly}, $P(A_N^\complement)\leq \tfrac{2(U^2+1)}{N\delta_N^2}$.
In the third step, we show that 
\begin{align*}
&P^N_{\theta_0}\left(\int_{\Theta} \norm{\theta-\theta_0}^2_{\mathbb{B}} e^{\ell_N(\theta)-\ell_N(\theta_0)} e^{(\mathbf{A}+2-b)N\delta_N^2}\mathrm{d}\Pi_N(\theta)>(C\delta_N^\eta)^2\right)
\\
\leq & \left(\int_{\Theta} \norm{\theta'}^2_{\mathbb{B}}\mathrm{d}\Pi^\prime (\theta') +\sup_{\theta_0\in S}\norm{\theta_0}_{\mathbb{B}}^2\right)e^{(\mathbf{A}+2-b)N\delta_N^2}(C\delta_N^\eta)^{-2},
\end{align*}
where the upper bound converges to zero for $b>\mathbf{A}+2$.
We then conclude by combining the preceding steps. 

\paragraph{Step 1} The second term on the right-hand side of \eqref{eq_intermed00} satisfies
\begin{align}
 &P^N_{\theta_0}\left(\Exp^{\Pi_N}[\norm{\theta-\theta_0}^2_{\mathbb{B}}\vert D_N]\Pi_N(F_N^\complement\vert D_N)>(C\delta_N^\eta)^2\right)
 \nonumber
 \\
 =&P^N_{\theta_0}\left(\Exp^{\Pi_N}[\norm{\theta-\theta_0}^2_{\mathbb{B}}\vert D_N]\Pi_N(F_N^\complement\vert D_N)>(C\delta_N^\eta)^2,\Pi_N(F_N^\complement\vert D_N)\leq e^{-bN\delta_N^2}\right)
 \label{eq_intermed07}
 \\
 &+P^N_{\theta_0}\left(\Exp^{\Pi_N}[\norm{\theta-\theta_0}^2_{\mathbb{B}}\vert D_N]\Pi_N(F_N^\complement\vert D_N)>(C\delta_N^\eta)^2,\Pi_N(F_N^\complement\vert D_N)> e^{-bN\delta_N^2}\right)
 \nonumber
\end{align}
for all $b > 0$.
We bound the first and second terms on the right-hand side of \eqref{eq_intermed07} separately. 
Recall the definition \eqref{eq_Nickl_2_20} of $\Theta_N(M)$.
For the second term on the right-hand side of \eqref{eq_intermed07}, note that if $\Pi_N(F_N\vert D_N)\leq a$ for some $a\geq 0$, then $\Pi_N(\Theta_N(M)\cap F_N\vert D_N)\leq a$.
This implies the first inequality below:
\begin{align*}
   P^N_{\theta_0}\left(\Pi_N(F_N\vert D_N)\leq 1-e^{-bN\delta_N^2}\right)\leq & P^N_{\theta_0}\left(\Pi_N(\Theta_N(M)\cap F_N\vert D_N)\leq 1-e^{-bN\delta_N^2}\right)
   \\
   \leq & \frac{C_1}{(N\delta_N^2)^{\mathbf{q}}}.
\end{align*}
The second inequality above follows by the hypothesis \eqref{eq_hypothesis_uniform_posterior_contraction_in_Banach_norm} and by the definition \eqref{eq_F_N} of $F_N$.
Thus, the second term on the right-hand side of \eqref{eq_intermed07} satisfies
\begin{align*}
 &P^N_{\theta_0}\left(\Exp^{\Pi_N}[\norm{\theta-\theta_0}^2_{\mathbb{B}}\vert D_N]\Pi_N(F_N^\complement\vert D_N)>(C\delta_N^\eta)^2,\Pi_N(F_N^\complement\vert D_N)> e^{-bN\delta_N^2}\right)
 \\
 \leq & P^N_{\theta_0}\left(\Pi_N(F_N^\complement\vert D_N)> e^{-bN\delta_N^2}\right)
 \\
 =& P^N_{\theta_0}\left(\Pi_N(F_N\vert D_N)\leq 1-e^{-bN\delta_N^2}\right)
 \\
 \leq & \frac{C_1}{(N\delta_N^2)^{\mathbf{q}}}.
\end{align*}
For the first term on the right-hand side of \eqref{eq_intermed07},
\begin{align*}
 &P^N_{\theta_0}\left(\Exp^{\Pi_N}[\norm{\theta-\theta_0}^2_{\mathbb{B}}\vert D_N]\Pi_N(F_N^\complement\vert D_N)>(C\delta_N^\eta)^2,\Pi_N(F_N^\complement\vert D_N)\leq e^{-bN\delta_N^2}\right)
 \\
 \leq &P^N_{\theta_0}\left(\Exp^{\Pi_N}[\norm{\theta-\theta_0}^2_{\mathbb{B}}\vert D_N]e^{-bN\delta_N^2}>(C\delta_N^\eta)^2,\Pi_N(F_N^\complement\vert D_N)\leq e^{-bN\delta_N^2}\right)
 \\
 \leq &P^N_{\theta_0}\left(\Exp^{\Pi_N}[\norm{\theta-\theta_0}^2_{\mathbb{B}}\vert D_N]e^{-bN\delta_N^2}>(C\delta_N^\eta)^2\right).
\end{align*}
Substituting these bounds into \eqref{eq_intermed07}, we obtain
\begin{align}
 &P^N_{\theta_0}\left(\Exp^{\Pi_N}[\norm{\theta-\theta_0}^2_{\mathbb{B}}\vert D_N]\Pi_N(F_N^\complement\vert D_N)>(C\delta_N^\eta)^2\right)
 \nonumber
 \\
 \leq & \frac{C_1}{(N\delta_N^2)^{\mathbf{q}}}+P^N_{\theta_0}\left(\Exp^{\Pi_N}[\norm{\theta-\theta_0}^2_{\mathbb{B}}\vert D_N]e^{-bN\delta_N^2}>(C\delta_N^\eta)^2\right).
 \label{eq_intermed07a}
\end{align}

\paragraph{Step 2} In this step, we further bound the right-hand side of \eqref{eq_intermed07a}.
Recall the definition \eqref{eq_Nickl_1_28_uniform} of the set $A_N=A_N(\theta_0)$.
Observe that 
\begin{align}
 &P^N_{\theta_0}\left(\Exp^{\Pi_N}[\norm{\theta-\theta_0}^2_{\mathbb{B}}\vert D_N]e^{-bN\delta_N^2}>(C\delta_N^\eta)^2\right)
 \nonumber
 \\
\leq &P^N_{\theta_0}\left(\Exp^{\Pi_N}[\norm{\theta-\theta_0}^2_{\mathbb{B}}\vert D_N]e^{-bN\delta_N^2}>(C\delta_N^\eta)^2,A_N\right)+P^N_{\theta_0}\left(A_N^\complement\right).
\label{eq_intermed08}
\end{align}
By \Cref{lemma_probability_of_A_N_events_converges_to_1_uniformly}, we have
\begin{equation*}
 P^N_{\theta_0}(A_N^\complement)\leq \frac{2(U^2+1)}{N\delta_N^2},
\end{equation*}
uniformly over $\theta_0\in S$.
Thus it remains to bound the first term on the right-hand side of \eqref{eq_intermed08}.
Using the definition \eqref{eq_Nickl_1_12} of the posterior with $\Pi\leftarrow \Pi_N$, and using the fact that $\ell_N(\theta_0)$ is constant with respect to $\theta$, we have
\begin{align*}
 \Exp^{\Pi_N}[\norm{\theta-\theta_0}^2_{\mathbb{B}}\vert D_N]=\int_{\Theta} \norm{\theta-\theta_0}^2_{\mathbb{B}} \frac{e^{\ell_N(\theta)-\ell_N(\theta_0)}}{\int_\Theta e^{\ell_N(\theta')-\ell_N(\theta_0)}\mathrm{d}\Pi_N(\theta')}\mathrm{d}\Pi_N(\theta).
\end{align*}
Thus, by the definition \eqref{eq_Nickl_1_28_uniform} of $A_N$, the following inequality holds on $A_N$:
\begin{align*}
  \Exp^{\Pi_N}[\norm{\theta-\theta_0}^2_{\mathbb{B}}\vert D_N]e^{-bN\delta_N^2}\leq \int_{\Theta} \norm{\theta-\theta_0}^2_{\mathbb{B}} e^{\ell_N(\theta)-\ell_N(\theta_0)} e^{(\mathbf{A}+2-b)N\delta_N^2}\mathrm{d}\Pi_N(\theta).
\end{align*}
Thus the first term on the right-hand side of \eqref{eq_intermed08} satisfies
\begin{align*}
 &P^N_{\theta_0}\left(\Exp^{\Pi_N}[\norm{\theta-\theta_0}^2_{\mathbb{B}}\vert D_N]e^{-bN\delta_N^2}>(C\delta_N^\eta)^2,A_N\right)
 \\
 \leq & P^N_{\theta_0}\left(\int_{\Theta} \norm{\theta-\theta_0}^2_{\mathbb{B}} e^{\ell_N(\theta)-\ell_N(\theta_0)} e^{(\mathbf{A}+2-b)N\delta_N^2}\mathrm{d}\Pi_N(\theta)>(C\delta_N^\eta)^2,A_N\right)
 \\
 \leq & P^N_{\theta_0}\left(\int_{\Theta} \norm{\theta-\theta_0}^2_{\mathbb{B}} e^{\ell_N(\theta)-\ell_N(\theta_0)} e^{(\mathbf{A}+2-b)N\delta_N^2}\mathrm{d}\Pi_N(\theta)>(C\delta_N^\eta)^2\right).
\end{align*}
By applying these bounds to \eqref{eq_intermed07a}, we obtain
\begin{align}
&P^N_{\theta_0}\left(\Exp^{\Pi_N}[\norm{\theta-\theta_0}^2_{\mathbb{B}}\vert D_N]\Pi_N(F_N^\complement\vert D_N)>(C\delta_N^\eta)^2\right)
 \nonumber
 \\
 \leq &\frac{C_1}{(N\delta_N^2)^{\mathbf{q}}}+\frac{2(U^2+1)}{N\delta_N^2}
 \label{eq_intermed07b}
 \\
 &+P^N_{\theta_0}\left(\int_{\Theta} \norm{\theta-\theta_0}^2_{\mathbb{B}} e^{\ell_N(\theta)-\ell_N(\theta_0)} e^{(\mathbf{A}+2-b)N\delta_N^2}\mathrm{d}\Pi_N(\theta)>(C\delta_N^\eta)^2\right).
 \nonumber
 \end{align}

\paragraph{Step 3} We further bound the right-hand side of \eqref{eq_intermed07b}.
By Markov's inequality and Tonelli's theorem, the third term on the right-hand side of \eqref{eq_intermed07b} satisfies
\begin{align*}
 &P^N_{\theta_0}\left(\int_{\Theta} \norm{\theta-\theta_0}^2_{\mathbb{B}} e^{\ell_N(\theta)-\ell_N(\theta_0)} e^{(\mathbf{A}+2-b)N\delta_N^2}\mathrm{d}\Pi_N(\theta)>(C\delta_N^\eta)^2\right)
 \\
 \leq & E^N_{\theta_0} \left[\int_{\Theta} \norm{\theta-\theta_0}^2_{\mathbb{B}} e^{\ell_N(\theta)-\ell_N(\theta_0)} e^{(\mathbf{A}+2-b)N\delta_N^2}\mathrm{d}\Pi_N(\theta)\right] (C\delta_N^\eta)^{-2}
 \\
 =& \int_{\Theta} \norm{\theta-\theta_0}^2_{\mathbb{B}} E^N_{\theta_0} \left[e^{\ell_N(\theta)-\ell_N(\theta_0)}\right]\mathrm{d}\Pi_N(\theta)  e^{(\mathbf{A}+2-b)N\delta_N^2}(C\delta_N^\eta)^{-2}.
\end{align*}
By \eqref{eq_Nickl_1_10}, $e^{\ell_N(\theta)-\ell_N(\theta_0)}$ is the Radon--Nikodym derivative $\tfrac{\mathrm{d}P^N_\theta}{\mathrm{d}P^N_{\theta_0}}$, and thus $E^N_{\theta_0}[e^{\ell_N(\theta)-\ell_N(\theta_0)}]=1$ for every $\theta\in\Theta$.

To bound $\int_{\Theta} \norm{\theta-\theta_0}^2_{\mathbb{B}}\mathrm{d}\Pi_N(\theta)$, note that $\norm{\theta-\theta_0}^2_{\mathbb{B}}\leq 2(\norm{\theta}^2_{\mathbb{B}}+\norm{\theta_0}^2_{\mathbb{B}})$, by the triangle inequality and the elementary inequality $(a+b)^2\leq 2(a^2+b^2)$ for $a,b\in\mathbb{R}$.
Recall from \Cref{definition_rescaled_Gaussian_priors} that $\Pi_N$ is the law of $\theta\coloneqq N^{-d_z/(4\alpha+4\kappa+2d_z)}\theta'$ for $\theta'\sim\Pi^\prime $, where $\Pi^\prime (\calR)=1$ by \Cref{assumption_Nickl_Condition_2_2_1}.
Thus,
\begin{equation*}
 \int_\Theta \norm{\theta}^2_{\mathbb{B}}\mathrm{d}\Pi_N(\theta)=\int_\Theta \norm{N^{-d/(4\alpha+4\kappa+2d)}\theta'}^2_{\mathbb{B}}\mathrm{d}\Pi^\prime (\theta')\leq \int_{\Theta} \norm{\theta'}^2_{\mathbb{B}}\mathrm{d}\Pi^\prime (\theta'),
\end{equation*}
where the rightmost term is the second $\norm{\cdot}_{\mathbb{B}}$-moment of $\Pi^\prime $ and is thus finite, by the hypothesis that $\Pi^\prime $ is a Gaussian measure that satisfies $\Pi^\prime (\mathbb{B})=1$.
Next, by the hypothesis that $S\subset\mathbb{B}$ is bounded in the $\mathbb{B}$ norm, $\sup_{\theta_0\in S}\norm{\theta_0}_{\mathbb{B}}$ is finite.
Thus the third term on the right-hand side of \eqref{eq_intermed07b} satisfies
\begin{align}
 &P^N_{\theta_0}\left(\int_{\Theta} \norm{\theta-\theta_0}^2_{\mathbb{B}} e^{\ell_N(\theta)-\ell_N(\theta_0)} e^{(\mathbf{A}+2-b)N\delta_N^2}\mathrm{d}\Pi_N(\theta)>(C\delta_N^\eta)^2\right)
 \nonumber
 \\
 \leq & 2\left(\int_{\Theta} \norm{\theta'}^2_{\mathbb{B}}\mathrm{d}\Pi^\prime (\theta') +\sup_{\theta_0\in S}\norm{\theta_0}_{\mathbb{B}}^2\right)e^{(\mathbf{A}+2-b)N\delta_N^2}(C\delta_N^\eta)^{-2}.
 \label{eq_intermed09}
 \end{align}
 For $b>\mathbf{A}+2$, the upper bound converges to zero uniformly over $\theta_0\in S$.
 
 \paragraph{Conclusion} Recall \eqref{eq_intermed00}:
 \begin{align*}
 \norm{\Exp^{\Pi_N}[\theta\vert D_N]-\theta_0}_{\mathbb{B}}  \leq & C\delta_N^\eta+\Exp^{\Pi_N}[\norm{\theta-\theta_0}^2_{\mathbb{B}}\vert D_N]^{1/2}\Pi_N(F_N^\complement\vert D_N)^{1/2}.
\end{align*}
The inequality above implies
\begin{equation*}
 \norm{\Exp^{\Pi_N}[\theta\vert D_N]-\theta_0}_{\mathbb{B}} >2C\delta^\eta_N\Longrightarrow \Exp^{\Pi_N}[\norm{\theta-\theta_0}^2_{\mathbb{B}}\vert D_N]^{1/2}\Pi_N(F_N^\complement\vert D_N)^{1/2}>C\delta^\eta_N,
\end{equation*}
and hence for every $\theta_0\in S$,
\begin{equation*}
P^N_{\theta_0}\left( \norm{\Exp^{\Pi_N}[\theta\vert D_N]-\theta_0}_{\mathbb{B}} >2C\delta^\eta_N\right)\leq P^N_{\theta_0}\left( \Exp^{\Pi_N}[\norm{\theta-\theta_0}^2_{\mathbb{B}}\vert D_N]^{1/2}\Pi_N(F_N^\complement\vert D_N)^{1/2}>C\delta^\eta_N\right).
\end{equation*}
By combining \eqref{eq_intermed07b} and \eqref{eq_intermed09}, the right-hand side of the inequality above satisfies
\begin{align*}
 &P^N_{\theta_0}\left( \Exp^{\Pi_N}[\norm{\theta-\theta_0}^2_{\mathbb{B}}\vert D_N]^{1/2}\Pi_N(F_N^\complement\vert D_N)^{1/2}>C\delta^\eta_N\right)
 \\
 \leq & \frac{C_1}{(N\delta_N^2)^{\mathbf{q}}} +\frac{2(U^2+1)}{N\delta_N^2}+2\left(\int_{\Theta} \norm{\theta'}^2_{\mathbb{B}}\mathrm{d}\Pi^\prime (\theta') +\text{diam}_{\calH}(S)^2\right)e^{(\mathbf{A}+2-b)N\delta_N^2}(C\delta_N^\eta)^{-2}.
\end{align*}
By hypothesis, the scalar $C_1$ in the first term on the right-hand side does not depend on $\theta_0$.
Thus, for sufficiently large $N$, the third term on the right-hand side is smaller than the first and second terms. 
Since 
\begin{equation*}
 \frac{C_1}{(N\delta_N^2)^{\mathbf{q}}} +\frac{2(U^2+1)}{N\delta_N^2} \leq \frac{C_1+2(U^2+1)}{(N\delta_N^2)^{1\wedge \mathbf{q}}} 
\end{equation*}
we obtain \eqref{eq_concentration_for_posterior_mean}.
This completes the proof of \Cref{theorem_uniform_posterior_contraction_implies_uniform_convergence_posterior_mean}. 
\end{proof}

\subsubsection{Proof of existence of uniform constants}
\label{sec_proof_of_existence_of_uniform_U_and_uniform_A}

\existenceOfUniformUandA*
\begin{proof}[Proof of \Cref{proposition_existence_of_uniform_U_and_uniform_A}]
 Given that the hypotheses of \Cref{theorem_Nickl_2_2_2} hold, it follows from \Cref{assumption_Nickl_Condition_2_2_1} that $\calH$ is the RKHS of the base prior $\Pi^\prime $ and $\Pi^\prime (\calR)=1$.
 Recall that the RKHS of a Gaussian measure is continuously embedded in the support of the Gaussian measure; see e.g. \cite[Proposition 2.6.9]{GinNic:2016}. 
 Thus, there exists some $0<C<\infty$ such that $\norm{h}_{\calR}\leq C\norm{h}_{\calH}$ for every $h\in\calH$, and since $S$ is bounded in the $\calH$ norm, it follows that $S$ is bounded in the $\calR$ norm, i.e. $\sup_{\theta_0\in S}\norm{\theta_0}_{\calR}$ is finite. This proves the first conclusion.

Next, we prove the second conclusion of the proposition, by adapting step 2 of the proof of \cite[Theorem 2.2.2]{Nic:2023}, cf. \cite[pp. 33-34]{Nic:2023}. 
This is justified, because the hypotheses of \Cref{theorem_Nickl_2_2_2} are stronger than the hypotheses of \cite[Theorem 2.2.2]{Nic:2023}.

Let $0<M_0<\infty$ and $U$ be as in the statement of the proposition, and let $\theta_0\in S$.
We claim that 
\begin{align*}
\Pi_N(\mathcal{B}_N(\theta_0,U))=&\Pi_N(\theta\in\Theta : d_G(\theta,\theta_0)\leq \delta_N, \norm{G(\theta)}_\infty \leq U)
\\
\geq & \Pi_N(\theta\in\Theta : d_G(\theta,\theta_0)\leq \delta_N,\norm{\theta-\theta_0}_{\calR}\leq M_0)
\\
\geq & \Pi_N(\theta\in\Theta : \norm{\theta-\theta_0}_{(H^\kappa)^*}\leq \tfrac{\delta_N}{L},\norm{\theta-\theta_0}_{\calR}\leq M_0),
\end{align*}
for some scalar $L$ to be determined below.
The equation follows from the definition \eqref{eq_Nickl_1_23} of $\mathcal{B}_N$.
For the first inequality, note first that
\begin{equation}
\label{step00}
 \norm{\theta-\theta_0}_{\calR}\leq M_0\Longrightarrow \norm{\theta}_{\calR}\leq M_0+\sup_{\theta_0\in S}\norm{\theta_0}_{\calR},
\end{equation}
by the triangle inequality.
Now \eqref{eq_Nickl_2_3} in \Cref{assumption_Nickl_Condition_2_1_1} implies that for any $\theta$ such that $\norm{\theta-\theta_0}_{\calR}\leq M_0$, it holds that $\norm{G(\theta)}_\infty\leq U$, thus proving the first inequality.
For the second inequality, note that \eqref{eq_Nickl_2_4} from \Cref{assumption_Nickl_Condition_2_1_1} can be written as follows: for any $0<M<\infty$, there exists $L=L(M)$ such that
\begin{equation*}
 d_G(\theta_1,\theta_2)\leq L \norm{\theta_1-\theta_2}_{(H^\kappa(\calZ))^*},\quad \forall \theta_1,\theta_2\in\Theta\cap B_{\calR}(M).
\end{equation*}
If $\norm{\theta-\theta_0}_{\calR}\leq M_0$, then by \eqref{step00}, we may replace $M$ in the above display with $M_0+\sup_{\theta_0\in S}\norm{\theta_0}_{\calR}$ and let $L=L(M_0+\sup_{\theta_0\in S}\norm{\theta_0}_{\calR})$, to conclude that $\norm{\theta-\theta_0}_{(H^\kappa(\calZ))^*}\leq \tfrac{\delta_N}{L}$ implies $d_G(\theta,\theta_0)\leq \delta_N$. 
This proves the second inequality.

Next, we rewrite the right-hand side of the second inequality.
Note that for any $\theta_0\in\Theta$ and $r>0$, $B_{(H^\kappa)^*}(\theta_0,r)\coloneqq\{\theta\in\Theta: \norm{\theta-\theta_0}_{(H^\kappa)^*} \leq r)$ and $B_{\calR}(\theta_0,r)\coloneqq \{\theta\in\Theta: \norm{\theta-\theta_0}_{\calR} \leq r\}$ are symmetric Borel sets if and only if $\theta_0=0$. 
Thus
\begin{align*}
 &\Pi_N(\theta\in\Theta : \norm{\theta-\theta_0}_{(H^\kappa)^*}\leq \tfrac{\delta_N}{L},\norm{\theta-\theta_0}_{\calR}\leq M_0)
 \\
 =&\Pi_N( \theta\in B_{(H^\kappa)^*}(\theta_0,\tfrac{\delta_N}{L})\cap B_{\calR}(\theta_0,M_0))
 \\
 =&\Pi_N(\theta-\theta_0\in  B_{(H^\kappa)^*}(0,\tfrac{\delta_N}{L})\cap B_{\calR}(0,M_0)).
\end{align*}
We now wish to bound the right-hand side of the last equation from below. 
Since $S$ is bounded in the $\calH$ norm, it follows that $S\subset \calH$.
In particular, $\theta_0\in\calH$.
Since $ B_{(H^\kappa)^*}(0,\tfrac{\delta_N}{L})\cap B_{\calR}(0,M)\subset\calR$ is a symmetric Borel set and $\calR$ is separable by \Cref{assumption_Nickl_Condition_2_2_1}, we may apply the small ball probability estimate from \cite[Corollary 2.6.18]{GinNic:2016} to obtain
\begin{align*}
 &\Pi_N(\theta-\theta_0\in  B_{(H^\kappa)^*}(0,\tfrac{\delta_N}{L})\cap B_{\calR}(0,M_0))
 \nonumber\\
 \geq& \exp(-\tfrac{1}{2}\norm{\theta_0}_{\calH}^2) \Pi_N(\theta\in B_{(H^\kappa)^*}(0,\tfrac{\delta_N}{L})\cap B_{\calR}(0,M_0))
 \nonumber
 \\
 \geq &\exp\left(-\frac{1}{2}\sup_{\theta_0\in S}\norm{\theta_0}^2_{\calH}\right)\Pi_N(\theta\in B_{(H^\kappa)^*}(0,\tfrac{\delta_N}{L})\cap B_{\calR}(0,M_0)),
\end{align*}
where in the second inequality we used the hypothesis that the set $S$ is bounded in the $\calH$ norm.

By the Gaussian correlation inequality, see e.g. \cite[Theorem B.1.2]{Nic:2023}, 
\begin{align*}
 &\Pi_N(\theta\in B_{(H^\kappa)^*}(0,\tfrac{\delta_N}{L})\cap B_{\calR}(0,M_0))\geq \Pi_N(\theta\in B_{(H^\kappa)^*}(0,\tfrac{\delta_N}{L}))\Pi_N(\theta\in B_{\calR}(0,M_0)).
\end{align*}
By choosing $M_0$ sufficiently large, we may ensure that $\Pi_N(\theta\in B_{\calR}(0,M_0))\geq \tfrac{1}{2}$.
By the argument between equations (2.22) and (2.24) on \cite[p. 34]{Nic:2023}, there exists some $0<a<\infty$ not depending on $S$, such that 
\begin{equation*}
 \Pi_N(\theta\in B_{(H^\kappa)^*}(0,\tfrac{\delta_N}{L}))\geq \exp\left(-aN\delta_N^2\right),
\end{equation*}
for $L$ as in \eqref{step00}.
Combining the preceding steps implies that
\begin{align*}
 \Pi_N(\mathcal{B}_N(\theta_0))\geq  \exp\left(-\log 2-\frac{1}{2}\sup_{\theta_0\in S}\norm{\theta_0}^2_{\calH}-aN\delta_N^2\right).
\end{align*}
By \eqref{eq_Nickl_2_19}, it follows that $N\delta_N^2\to\infty$ as $N\to\infty$. Thus, there exists some $\mathbf{A}$ that satisfies \eqref{eq_uniform_constant_A}.
This completes the proof of \Cref{proposition_existence_of_uniform_U_and_uniform_A}.
\end{proof}

If the set $S$ of candidate true parameters is not bounded in the $\calH$ norm, then the proof technique of \Cref{proposition_existence_of_uniform_U_and_uniform_A} cannot be applied as written.
 This is because of the intermediate step that applies \cite[Corollary 2.6.18]{GinNic:2016}, which uses the property that $\theta_0$ belongs to the RKHS $\calH$ of $\Pi_N$.
For example, if $S= B_{\calR}(C)$ for some $0<C<\infty$, then $S$ is not bounded in the $\calH$ norm. If it were, then by the continuous embedding of $\calH$ in $\calR$, the $\calH$ norm and $\calR$ norm would be equivalent, which is not true when $\calH$ is infinite-dimensional.

\subsection{Proofs for uniform posterior contraction and distinguishability in \footnotesize{$L^2$}}
\label{sec_proof_uniform_posterior_contraction_distinguishability_L2}

\begin{proof}[Proof of \Cref{theorem_uniform_posterior_contraction_L2}]
We prove the first statement.
Given the hypothesis that $S\subset\calR$ is bounded in the $\calH$ norm, we may apply \Cref{proposition_existence_of_uniform_U_and_uniform_A} to conclude that $\sup_{\theta_0\in S}\norm{\theta_0}_{\calR}$ is finite.
Again by \Cref{proposition_existence_of_uniform_U_and_uniform_A}, for for every $M>0$ and for $U=U(M+\sup_{\theta_0\in S}\norm{\theta_0}_{\calR})$, there exists some $0<\mathbf{A}=\mathbf{A}(S,U)<\infty$ such that \eqref{eq_uniform_constant_A} holds, i.e.
\begin{equation*}
 \forall \theta_0\in S,N\in\N, \quad \Pi_N(\mathcal{B}_N(\theta_0,U(M+\sup_{\theta_0\in S}\norm{\theta_0}_{\calR})))\geq e^{-\mathbf{A}N\delta_N^2}.
\end{equation*}
By \eqref{eq_Nickl_2_3}, $U(M+\sup_{\theta_0\in S}\norm{\theta_0}_{\calR})$ increases with $M$, and by the discussion below \eqref{eq_Nickl_1_23}, $\mathcal{B}_N(\theta_0,\mathcal{U})$ is increasing in $\mathcal{U}$.
Thus, for the same $\mathbf{A}$ as in the preceding display, the following holds for every $M_1\geq M$:
\begin{equation*}
 \forall \theta_0\in S,N\in\N, \quad \Pi_N(\mathcal{B}_N(\theta_0,U(M_1+\sup_{\theta_0\in S}\norm{\theta_0}_{\calR})))\geq e^{-\mathbf{A}N\delta_N^2}.
\end{equation*}
This implies that we may increase $M$ further if necessary, to ensure that there exists some $\mathbf{B}>\mathbf{A}+2$ and some $\overline{m}=\overline{m}(M,L)$ that do not depend on $\theta_0$, such that the sets $(\Theta_N(M))_{N\in\N}$ also satisfy \eqref{eq_Nickl_1_25} and \eqref{eq_Nickl_1_26}.
In particular, for any such $M$, and for any $0<b<\mathbf{B}-(\mathbf{A}+2)$, the final conclusion of \Cref{theorem_Nickl_2_2_2} holds: for any $0<c<1$, there exists $m=m(\mathbf{A},b,\overline{m})$ sufficiently large and independent of $c$, such that for $U=U(M)$, for $C_U$ given by \eqref{eq_C_of_U_function} with $u\leftarrow U$, for some $N^*(c)\in\N$, and for every $N\geq N^*(c)$, \eqref{eq_Nickl_1_27} holds:
\begin{equation*}
  P^N_{\theta_0}\left(\Pi_N(\{\theta\in\Theta_N(M): d_G(\theta,\theta_0)\leq m\delta_N C_{U}^{-1/2}\}\vert D_N)\leq 1-e^{-bN\delta_N^2}\right)\leq (1+c)\frac{2(U^2+1)}{N\delta_N^2}.
\end{equation*}
In particular, neither $m$, $\delta_N$, $C_U$, $b$, $c$, or $U$ depend on $\theta_0$.
By fixing an arbitrary $0<c<1$, we obtain the first conclusion.

Next, we prove the second conclusion \eqref{eq_Nickl_2_26}, using the same argument as the proof of \cite[Theorem 2.3.1]{Nic:2023}.
If \Cref{assumption_local_Lipschitz_continuity_of_restricted_inverse_forward_map} holds, then there exists some $\eta>0$ and some $L^\prime=L^\prime(M)$, neither of which depend on $\theta_0$, such that
\begin{equation*}
\norm{\theta-\theta_0}_{L^2_\zeta(\calZ,W)}\leq L^\prime d_G(\theta,\theta_0)^\eta,\quad\forall (\theta,\theta_0)\in\Theta_N(M)\times S.
\end{equation*}
Thus,
\begin{align*}
  \left\{\theta\in\Theta_N(M):d_G(\theta,\theta_0)\leq m C_U^{-1/2}\delta_N\right\}\subseteq \left\{\theta\in\Theta_N(M):\norm{\theta-\theta_0}_{L^2_\zeta(\calZ,W)}\leq L^\prime (mC_U^{-1/2}\delta_N)^\eta\right\}.
 \end{align*}
The set inclusion above and the first conclusion then imply \eqref{eq_Nickl_2_26}.
\end{proof}

\subsection{Proofs for uniform posterior contraction and distinguishability in Sobolev and supremum norms}
\label{sec_proof_uniform_posterior_contraction_distinguishability_Sobolev_Linfty}

\begin{proof}[Proof of \Cref{theorem_uniform_posterior_contraction_Linfty}]
 The proof is essentially the same as the proof of \cite[Proposition 4.1.3]{Nic:2023}. 
Recall the notation `$\lesssim$' from \Cref{subsection_notation}, the notation $d_z$ for the dimension of the domain $\calZ$, and recall the Sobolev embedding and the interpolation inequality for Sobolev norms, see e.g. \cite[equation (A.4), (A.5)]{Nic:2023}: 
\begin{subequations}
 \begin{align}
  \label{eq_Nickl_A_4}
\forall\alpha> d_z/2,\ 0<\eta<\alpha-d_z/2,\ f\in H^\alpha&:\quad \norm{f}_\infty \lesssim \norm{f}_{C^\eta}\lesssim\norm{f}_{H^\alpha}
\\
\label{eq_Nickl_A_5}
\forall \beta_1,\beta_2\geq 0,\ \theta\in[0,1],\ u\in H^{\beta_1}\cap H^{\beta_2}&:\quad \norm{u}_{H^{\theta\beta_1+(1-\theta)\beta_2}}\lesssim \norm{u}^{\theta}_{H^{\beta_1}}\norm{u}^{1-\theta}_{H^{\beta_2}}.
 \end{align}
\end{subequations}
We first prove \eqref{eq_Sobolev_posterior_contraction_uniform_version}.
Fix an arbitrary $\beta'\in [0,\beta)$.
For the substitutions $\beta_1\leftarrow 0$, $\beta_2\leftarrow \beta$ and $\theta\leftarrow(\beta- \beta')/\beta$, it follows that $(1-\theta)\leftarrow \beta'/\beta$ and $\theta\beta_1+(1-\theta)\beta_2\leftarrow \beta'$.
Applying \eqref{eq_Nickl_A_5} with these substitutions yields
\begin{equation}
\label{eq_intermed01}
 \norm{u}_{H^{\beta^\prime}}\lesssim \norm{u}^{(\beta-\beta')/\beta}_{L^2}\norm{u}_{H^{\beta}}^{\beta'/\beta},\quad \forall u\in H^{\beta}.
\end{equation}
By the hypothesis that $\zeta$ is a probability measure on $\calZ$ with a Lebesgue density $p_\zeta$ that has strictly positive essential infimum, we may apply statement \cref{embedding_L2zeta_L2} of \cref{lemma_embedding} to conclude that
\begin{align*}
	\norm{u}_{L^2} \lesssim \norm{u}_{L^2_\zeta}, \quad \forall u \in L^2_\zeta(\calZ).
\end{align*}
By \Cref{assumption_Nickl_Condition_2_2_1}, $\Pi^\prime (\calR)=1$ and hence by \Cref{definition_rescaled_Gaussian_priors}, $\Pi_N(\calR)=1$ for every $N$.
By the hypothesis that $\calR$ is continuously embedded in $H^\beta(\calZ)$ for $\beta>d_z/2$, it follows that $\theta-\theta_0 \in H^{\beta}$ for $\Pi_N$-a.e. $\theta$ and every $\theta_0\in S$, and it follows that $\norm{\theta-\theta_0}_{H^\beta}\lesssim \norm{\theta}_{\calR}+\norm{\theta_0}_{\calR}$, by the triangle inequality.
Recall that in \Cref{theorem_uniform_posterior_contraction_L2}, $M$ is chosen to satisfy $\sup_{\theta_0\in S}\norm{\theta_0}_{\calR}<M<\infty$, i.e. $S\subset B_{\calR}(M)$. Given the restriction to $\theta$ such that $\norm{\theta}_{\calR}\leq M$ in \eqref{eq_Linfty_posterior_contraction_uniform_version}, it follows that
\begin{equation*}
\forall( \theta,\theta_0)\in B_{\calR}(M)\times S,\quad \norm{\theta-\theta_0}_{H^\beta}^{\beta'/\beta}\lesssim (2M)^{\beta'/\beta}\eqqcolon C_1=C_1(\beta,\beta',M).
\end{equation*}
By \eqref{eq_intermed01} with the substitution $u\leftarrow \theta-\theta_0$, and by the preceding inequality, we have
\begin{equation*}
\forall (\theta,\theta_0)\in B_{\calR}(M)\times S,\quad  \norm{\theta-\theta_0}_{H^{\beta^\prime}}\lesssim C_1 \norm{\theta-\theta_0}_{L^2_\zeta}^{(\beta - \beta')/\beta}.
\end{equation*}
Next, note that $\Theta_N(M)\subset \{\theta\in\Theta:\norm{\theta}_{\calR}\leq M\}$, by the definition \eqref{eq_Nickl_2_20} of $\Theta_N(M)$. By the preceding arguments,
\begin{align*}
 &\left\{\theta\in\Theta_N(M):\norm{\theta-\theta_0}_{L^2_\zeta}\leq L^\prime(mC_U^{-1/2}\delta_N)^\eta \right\}
 \\
 &\subseteq\left\{ \theta\in\Theta:\norm{\theta}_{\calR}\leq M, \norm{\theta-\theta_0}_{H^{\beta^\prime}}\lesssim C_1 \left(L^\prime(mC_U^{-1/2}\delta_N)^\eta\right)^{(\beta-\beta')/\beta}\right\}.
\end{align*}
This implies that for $P^N_{\theta_0}$-almost every realisation of $D_N$ that
\begin{align*}
  &\Pi_N\left(\left\{\theta\in\Theta_N(M):\norm{\theta-\theta_0}_{L^2_\zeta}\leq L^\prime(mC_U^{-1/2}\delta_N)^\eta \right\}\biggr\vert D_N\right)
 \\
 &\leq\Pi_N\left(\left\{ \theta\in\Theta:\norm{\theta}_{\calR}\leq M, \norm{\theta-\theta_0}_{H^{\beta^\prime}}\lesssim C_1 \left(L^\prime(mC_U^{-1/2}\delta_N)^\eta\right)^{(\beta-\beta')/\beta}\right\}\biggr\vert D_N\right).
\end{align*}
In particular, if the right-hand side of the inequality above is less than or equal to some $a>0$, then so is the left-hand side of the inequality.
Thus, for $C_0\coloneqq L^\prime (m C_U^{-1/2})^\eta$, it follows that
\begin{align*}
&P^N_{\theta_0}\left(\Pi_N\left(\left\{\theta\in\Theta_N(M):\norm{\theta-\theta_0}_{L^2_\zeta}\leq C\delta_N^\eta \right\}\biggr\vert D_N\right)\leq 1-e^{-bN\delta_N^2}\right)
\\
\geq &
P^N_{\theta_0}\left(\Pi_N\left(\left\{ \theta\in\Theta:\norm{\theta}_{\calR}\leq M, \norm{\theta-\theta_0}_{H^{\beta^\prime}}\lesssim C_1 \left(C_0\delta_N)^\eta\right)^{(\beta-\beta')/\beta}\right\}\biggr\vert D_N\right)\leq 1-e^{-bN\delta_N^2}\right).
\end{align*}
The desired conclusion \eqref{eq_Sobolev_posterior_contraction_uniform_version} of uniform posterior contraction in the $H^{\beta^\prime}$ metric now follows from the result \eqref{eq_Nickl_2_26} of uniform posterior contraction in the $L^2_\zeta$ metric, for some suitable $C$.

To prove the second conclusion  \eqref{eq_Linfty_posterior_contraction_uniform_version} of uniform posterior contraction in the metric induced by the supremum norm, we use a similar argument. By applying \eqref{eq_Nickl_A_4} with $\alpha\leftarrow \beta'$, we may conclude that there exists some constant $D$ such that for every $u\in H^{\beta^\prime}$, $\norm{u}_\infty\leq D\norm{u}_{H^{\beta^\prime}}$. Thus,
\begin{align*}
 \{\theta\in\Theta:\norm{\theta}_{\calR}\leq M,\norm{\theta-\theta_0}_{H^{\beta^\prime}}\leq C\delta_N^{\eta(\beta-\beta')/\beta}\}
 \subseteq \{\theta\in\Theta:\norm{\theta}_{\calR}\leq M,\norm{\theta-\theta_0}_{\infty}\leq CD\delta_N^{\eta(\beta-\beta')/\beta}\}.
\end{align*}
Thus, for $P^N_{\theta_0}$-almost every realisation of $D_N$,
\begin{align*}
&\Pi_N( \{\theta\in\Theta:\norm{\theta}_{\calR}\leq M,\norm{\theta-\theta_0}_{H^{\beta^\prime}}\leq C\delta_N^{\eta(\beta-\beta')/\beta}\}\vert D_N)
 \\
 \leq&\Pi_N(\{\theta\in\Theta:\norm{\theta}_{\calR}\leq M,\norm{\theta-\theta_0}_{\infty}\leq CD\delta_N^{\eta(\beta-\beta')/\beta}\}\vert D_N).
\end{align*}
In particular, if the right-hand side of the inequality above is less than or equal to some $a>0$, then so is the left-hand side of the inequality.
Thus, for $C^\prime\coloneqq CD$, it follows that
\begin{align*}
&P^N_{\theta_0}\left(\Pi_N( \{\theta\in\Theta:\norm{\theta}_{\calR}\leq M,\norm{\theta-\theta_0}_{H^{\beta^\prime}}\leq C\delta_N^{\eta(\beta-\beta')/\beta}\}\vert D_N)\leq 1-e^{-bN\delta_N^2}\right)
 \\
 \geq& P^N_{\theta_0}\left(\Pi_N(\{\theta\in\Theta:\norm{\theta}_{\calR}\leq M,\norm{\theta-\theta_0}_{\infty}\leq C^\prime\delta_N^{\eta(\beta-\beta')/\beta}\}\vert D_N)\leq 1-e^{-bN\delta_N^2}\right)
 \end{align*}
 and thus the second conclusion \eqref{eq_Linfty_posterior_contraction_uniform_version} of uniform posterior contraction in the metric induced by the supremum norm follows from the first conclusion \eqref{eq_Sobolev_posterior_contraction_uniform_version} of uniform posterior contraction in the $H^{\beta^\prime}$ metric.
This completes the proof of \Cref{theorem_uniform_posterior_contraction_Linfty}.
\end{proof}

\section{Distinguishability for simple hypotheses}
\label{sec:dist_simple_hyp}

The proof of the $L^2_\zeta$-posterior contraction result \cite[Theorem 2.3.1]{Nic:2023} for a single unknown $\theta_0 \in \calH\cap\calR$ immediately yields the existence of $L^2_\zeta$ hypothesis tests that are able to distinguish between a simple hypothesis and an alternative given by the complement of a ball, where the complement is separated from the null hypothesis by the posterior contraction rate. Recall from the discussion after \eqref{eq:def_dist} that the rate of minimax distinguishability represents an upper bound for the minimax separation rate. Given the preceding observations, it follows that the minimax $L^2_\zeta$-separation rate is bounded from above by the posterior contraction rate. Recall the sequence $(\delta_N)_{N\in\N}$, the sets $(\Theta_N)_{N\in\N}$, and the mapping $U\mapsto C_U$ defined in \eqref{eq_Nickl_2_19}, \eqref{eq_Nickl_2_20}, and \eqref{eq_C_of_U_function} respectively.
\begin{theorem}
	\label{simple_tests_exist}
	Suppose that \Cref{assumption_Nickl_Condition_2_1_1} holds with $U=U(M)$ and $L=L(M)$ for every $M>0$.
   Suppose $M$ and $\overline{m}=\overline{m}(M,L)$ satisfy \eqref{eq_Nickl_1_26} for every $N\in\N$ and $\theta_0\in B_{\calR}(M)$. 
    In addition, suppose that \Cref{assumption_local_Lipschitz_continuity_of_restricted_inverse_forward_map} holds with $\eta>0$, $L^\prime$, and $\delta_0$.  
   Then there exists a universal constant $0<K<\infty$ such that for every $m > 2\overline{m}$, there exist tests $(\Psi_N)_{N \in \N}$ such that the type $1$ and type $2$ errors for testing
\begin{equation}
	\label{eq:def_hyp_alt}
	H_0: \theta = \theta_0 \quad \text{vs.} \quad H_{1,N} \coloneqq \left\{\theta \in \Theta_N: \norm{\theta - \theta_0}_{L_\zeta^2} > L^\prime(m C_U^{-1/2})^\eta\delta_N^\eta\right\}
\end{equation}
	are bounded by 
	\begin{equation}
	\label{eq:simple_tests_exist_type_1_type_2_bounds}
		P_{\theta_0}^N(\Psi_N = 1) \le  \frac{1}{K}\exp(-(Km^2-1)N\delta_N^2),\qquad	\sup_{\theta \in H_{1,N}} P_\theta^N(\Psi_N = 0) \le  \exp(-Km N\delta_N^2).
\end{equation}
\end{theorem}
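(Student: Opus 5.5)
The plan is to take the tests $(\Psi_N)_{N\in\N}$ supplied by \Cref{lemma_existence_of_tests} and show that the alternative $H_{1,N}$ in \eqref{eq:def_hyp_alt} sits inside the Hellinger alternative controlled by that lemma. The hypotheses of \Cref{simple_tests_exist} already contain those of \Cref{lemma_existence_of_tests}: \Cref{assumption_Nickl_Condition_2_1_1}, the entropy bound \eqref{eq_Nickl_1_26}, and $\theta_0\in B_\calR(M)$. So for every $m>2\overline{m}$ we obtain tests whose type $1$ error satisfies the first bound in \eqref{eq:simple_tests_exist_type_1_type_2_bounds} directly. Their type $2$ error is controlled uniformly over $\{\theta\in\Theta_N(M): h(p_\theta,p_{\theta_0})>m\delta_N\}$.

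It therefore suffices to prove the set inclusion
\[ H_{1,N}\subseteq\{\theta\in\Theta_N(M): h(p_\theta,p_{\theta_0})>m\delta_N\}. \]
I will argue by contraposition. Let $\theta\in\Theta_N(M)$ with $h(p_\theta,p_{\theta_0})\le m\delta_N$. Both $\theta$ and $\theta_0$ lie in $B_\calR(M)$, so the lower bound in \eqref{eq_Nickl_1_20}, i.e. \cite[Proposition 1.3.1]{Nic:2023} with envelope $U=U(M)$, gives
\[ d_G(\theta,\theta_0)\le C_U^{-1/2}h(p_\theta,p_{\theta_0})\le mC_U^{-1/2}\delta_N. \]
Then \Cref{assumption_local_Lipschitz_continuity_of_restricted_inverse_forward_map}, applied with $\delta\leftarrow mC_U^{-1/2}\delta_N$, yields
\[ \norm{\theta-\theta_0}_{L^2_\zeta}\le L^\prime(mC_U^{-1/2}\delta_N)^\eta. \]
Hence $\theta\notin H_{1,N}$, which proves the inclusion. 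Taking the supremum of the lemma's type $2$ bound over the smaller set $H_{1,N}$ gives the second bound in \eqref{eq:simple_tests_exist_type_1_type_2_bounds}.

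The main obstacle is that \Cref{assumption_local_Lipschitz_continuity_of_restricted_inverse_forward_map} only applies for $\delta\in(0,\delta_0]$. Since $\delta_N\to0$, we have $mC_U^{-1/2}\delta_N\le\delta_0$ for all $N$ beyond some threshold $N_1$. For the finitely many $N<N_1$, I would either state the result for $N\ge N_1$ or adjust the constants. Alternatively, the stability estimate can be extended to all $\delta$ up to the finite diameter of $G(B_\calR(M))$ in $d_G$, which is bounded by $2U$, at the cost of enlarging $L^\prime$.

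A secondary point is the exponent in the type $2$ bound. The statement of \Cref{lemma_existence_of_tests} writes $Km$, while its proof gives $Km^2$. Since $Km^2\ge Km$ once $m\ge1$, the stated bound follows in either case, possibly after enlarging $m$.
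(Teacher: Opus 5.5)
Your proposal is correct and follows the paper's proof essentially step for step. The paper likewise takes the tests from \Cref{lemma_existence_of_tests} and uses $C_U^{1/2}d_G\le h$ together with the complemented stability estimate to place $H_{1,N}$ inside the Hellinger alternative. It also only obtains this for $N$ large enough that $mC_U^{-1/2}\delta_N\le\delta_0$, which is exactly the restriction you flag. Of your proposed fixes, restricting to $N\ge N_1$ is the one that works under the stated hypotheses. Extending the stability estimate up to $\delta=2U$ would additionally require $B_\calR(M)$ to be bounded in $L^2_\zeta$, and this is not among the hypotheses.
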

\Cref{simple_tests_exist} includes the limiting case in which the alternative is separated from the null at exactly the rate $\delta_N^\eta$, because the prefactor $L^\prime(mC_U^{-1/2})^\eta$ of $\delta_N^\eta$ in the definition of the alternative $H_{1,N}$ in \eqref{eq:def_hyp_alt} is constant with respect to $N$; cf. \cref{remark_limit_case}.

\begin{proof}[Proof of \Cref{simple_tests_exist}]
Since the stated hypotheses include the hypotheses of  \Cref{lemma_existence_of_tests}, we may apply the conclusion and any statement in the proof of \Cref{lemma_existence_of_tests}.
By the conclusion of \Cref{lemma_existence_of_tests}, there exist a universal constant $0<K<\infty$ and tests $(\Psi_N)_{N\in\N}$ such that for every $m>2\overline{m}$, \eqref{eq_Nickl_1_29} holds.
The bound on the type $1$ error in \eqref{eq_Nickl_1_29} yields the bound on the type $1$ error in \eqref{eq:simple_tests_exist_type_1_type_2_bounds}.

To obtain the bound on the type $2$ error in \eqref{eq:simple_tests_exist_type_1_type_2_bounds}, we first note that the alternative hypothesis in \eqref{eq_Nickl_1_29} is of the form $\{ \theta \in \Theta_N: h(p_\theta,p_{\theta_0}) > m\delta_N\}$.
Recall the definition \eqref{semimetric_dG} of the semimetric $d_G$ on $\Theta$.
By the lower bound $C_U^{1/2}d_G(\theta,\vartheta)\leq h(p_\theta,p_{\vartheta})$ from \eqref{eq_Nickl_1_20}, we conclude that
   \begin{equation}
     \label{eq_intermed11a}
     \{ \theta\in\Theta_N:d_G(\theta,\theta_0)>m\delta_N C_U^{-1/2}\}\subset \{\theta\in\Theta_N:h(p_\theta,p_{\theta_0})>m\delta_N\}.
   \end{equation}
By the hypothesis that \Cref{assumption_local_Lipschitz_continuity_of_restricted_inverse_forward_map} holds with $\eta>0$, $L^\prime$, and $\delta_0$, it follows that for every $\delta\leq \delta_0$, $\{\theta\in\Theta_N: d_G(\theta_1,\theta_2)\leq \delta\} \subset \{\theta\in\Theta_N:\norm{\theta_1-\theta_2}_{L^2_\zeta(\calZ,W)}\leq L^\prime\delta^\eta\}$,
see e.g. the proof of \cite[Theorem 2.3.1]{Nic:2023}.
Thus, by taking complements in $\Theta_N$, it follows that for every $\delta\leq \delta_0$,
\begin{equation*}
 \{\theta\in\Theta_N:\norm{\theta_1-\theta_2}_{L^2_\zeta(\calZ,W)}> L^\prime\delta^\eta\}\subset \{\theta\in\Theta_N: d_G(\theta_1,\theta_2)> \delta\} .
\end{equation*}
In particular, for all sufficiently large $N\in\N$ such that $m\delta_N C_U^{-1/2}\leq \delta_0$, it also holds that
\begin{equation}
\label{eq_intermed11b}
 H_{1,N}=\{\theta\in\Theta_N:\norm{\theta_1-\theta_2}_{L^2_\zeta(\calZ,W)}> L^\prime(m\delta_N C_U^{-1/2})^\eta\}\subset \{\theta\in\Theta_N: d_G(\theta_1,\theta_2)> m\delta_N C_U^{-1/2}\} ,
\end{equation}
where we used the definition of $H_{1,N}$ in \eqref{eq:def_hyp_alt}.
Combining \eqref{eq_intermed11a} and \eqref{eq_intermed11b}, we obtain $H_{1,N}\subset \{\theta\in\Theta_N:h(p_\theta,p_{\vartheta})>m\delta_N\}$.
Thus, by the bound on the type $2$ error in \eqref{eq_Nickl_1_29},
\begin{equation*}
 \sup_{\theta\in H_{1,N}} P^N_\theta(\Psi_N=0)
  \leq \sup_{\theta\in\Theta_N:h(p_\theta,p_{\vartheta})>m\delta_N}P^N_\theta(\Psi_N=0) \leq e^{-KmN\delta_N^2},
\end{equation*}
which proves the bound on the type $2$ error in \eqref{eq:simple_tests_exist_type_1_type_2_bounds}. This completes the proof of \Cref{simple_tests_exist}.
\end{proof}

\end{document}